\documentclass[11pt,a4paper]{article}
\usepackage[T1]{fontenc}
\usepackage{lmodern}
\usepackage[margin=27mm]{geometry}
\usepackage{amsmath,amssymb,amsthm,mathtools,microtype}
\usepackage{enumitem}
\usepackage{tikz-cd}
\usepackage{float}
\usepackage[colorlinks=true,linkcolor=blue,citecolor=blue,urlcolor=blue]{hyperref}
\usepackage{bookmark}
\numberwithin{equation}{section}
\newtheorem{theorem}{Theorem}[section]
\newtheorem{lemma}[theorem]{Lemma}
\newtheorem{proposition}[theorem]{Proposition}

\theoremstyle{definition}\newtheorem{assumption}{Assumption}[section]
\theoremstyle{remark}
\newcommand{\R}{\mathbb R}
\newcommand{\Z}{\mathbb Z}
\newcommand{\T}{\mathbb T}
\newcommand{\Torus}{\mathbb T}
\newcommand{\id}{\operatorname{id}}
\newcommand{\Lop}{\mathcal L}
\newcommand{\Dop}{\mathcal D}
\newcommand{\Kop}{\mathcal K}
\newcommand{\Sop}{\mathcal S}
\newcommand{\darc}{\partial_{s}}

\newcommand{\W}{W^{1,\infty}}

\newcommand{\F}{\mathcal F}

\allowdisplaybreaks[2]
\hypersetup{pdftitle={Tangential stability and fully discrete convergence of the classical BGN scheme for curve shortening flow},pdfauthor={Qiqi Rao}}
\title{Tangential stability and fully discrete convergence\\of the classical BGN scheme for curve shortening flow}
\author{Qiqi Rao\thanks{Corresponding author. Department of Mathematics,
The Chinese University of Hong Kong, Shatin, Hong Kong, China.
Email: \href{mailto:qiqirao@cuhk.edu.hk}{qiqirao@cuhk.edu.hk}.}}
\date{}
\begin{document}\maketitle
\begin{abstract}
We prove fully discrete convergence of the classical
Barrett--Garcke--N\"urnberg (BGN) scheme for curve-shortening flow of
smooth embedded closed planar curves. The main obstruction is that the
mass form controls only normal motion, whereas the tangential motion is
determined implicitly by the curvature equation and governs the
parametrization. The usual length-decay estimate therefore does not
control perturbations of the full position update. We separate temporal
and spatial errors through the time-semidiscrete BGN solution. A scalar
normal resolvent and exact curvature and length identities yield uniform
regularity and first-order time convergence. For the spatial analysis,
an adapted normal--tangential norm gives a near-contractive estimate for
the linearized update. Gauss--Lobatto cancellations and an exact
covariance identity for the assembled nodal normals produce an $H^1$
one-step defect of order $h^{k+1}$, while an exact difference identity
controls the nonlinear remainder. For every fixed $k\ge1$, including
the original piecewise linear method, we obtain the matched
$W^{1,\infty}$ error bound $C(\tau+h^k)$ on periodic quasi-uniform meshes
with $h\le c\tau^2$. All discrete steps are uniquely solvable, and the
numerical curves remain regular and embedded. To the best of our
knowledge, this is the first fully discrete convergence result for the
classical BGN curve-shortening scheme without additional stabilization.

\medskip
\noindent\textbf{2020 Mathematics Subject Classification.}
65M15, 65M12, 65M60, 53E10.

\noindent\textbf{Keywords.}
Parametric finite element method, BGN scheme, curve-shortening flow,
tangential motion, fully discrete error estimate, Gauss--Lobatto mass lumping.
\end{abstract}

\section{Introduction}

The parametric finite element methods of Barrett, Garcke and N\"urnberg
(BGN) couple a normal-velocity law to a weak curvature equation, which
also determines tangential node motion. These methods were introduced
for second- and fourth-order curve evolutions
\cite{BGN2007JCP,BGN2007} and extended to evolving surfaces
\cite{BGN2008}. For curve shortening, freezing the normal and metric
at the beginning of a time step gives a linear system and a discrete
length-decay inequality without a time-step restriction
\cite{BGN2007}. The tangential motion redistributes nodes along the
curve, with mesh properties reviewed in \cite{BGN2020}.
Applications of these weak curvature formulations include two-phase
Navier--Stokes flow \cite{BGN2015NavierStokes}, surfactant transport
\cite{BGN2015Surfactant}, fluidic membranes \cite{BGN2016Membranes},
solid-state dewetting \cite{ZhaoJiangBao2021}, and planar Willmore flow
\cite{GarckeNurnbergZhao2025Willmore}. Higher-order BGN extensions
include second-order and BDF time discretizations
\cite{JiangSuZhang2024SecondOrder,JiangSuZhang2024BDF}, Runge--Kutta
methods based on continuous equations on each time interval
\cite{MaRao2026}, and isoparametric spatial discretizations
\cite{GarckeEtAl2025}.

We study the classical BGN scheme for closed planar curves with
outward normal velocity $V_{\mathrm n}=-\kappa$, where $\kappa$ is the
signed curvature, on a fixed interval $[0,T]$ before the first singular time.
The objective is an error estimate for the computed positions while
retaining the original discrete equations. Three features enter this
analysis. The time mass form controls only the normal component,
whereas the implicitly selected tangential motion determines the
parametrization. Uniform derivative bounds and preservation of regularity
and embeddedness must be proved for the time-semidiscrete curves.
Finally, the assembled nodal normals and the changing metric enter
both the consistency error and the linearized update. Discrete length
decay alone does not supply these estimates. The central issue is
therefore stability rather than formal consistency with the normal
velocity law: the curvature coupling must recover control of the full
parametrized update from a mass form that sees only its normal component.

Error estimates are available for several related discretizations.
Li \cite{Li2020} analyzed Dziuk's linearly implicit curve-shortening
method, whose time term acts on the full vector velocity. Elliott and
Fritz \cite{ElliottFritz2017} proved semidiscrete estimates for a
DeTurck reparametrization with a fixed positive parameter. Their
estimate is not uniform as that parameter tends to zero.
For BGN-type methods, Bai and Li \cite{BaiLi2025} proved convergence
of a stabilized curve-shortening scheme with polynomial degree
$k\ge2$. They also characterized its limiting particle trajectories
through a minimal-deformation-rate system. Bai, Garcke and Veerapaneni
\cite{BaiGarckeVeerapaneni2026} analyzed a stabilized BGN-type method
for curves transported by a prescribed background velocity field.
Other constructions prescribe tangential motion through harmonic maps
\cite{DuanLi2024} or minimal deformation rates \cite{GaoEtAl2026MDR}.
Huang, Li and Tang \cite{HuangLiTang2026} proved convergence for a
closed-surface method that imposes this latter motion through an
additional elliptic velocity equation and uses an $L^2$-projected normal.

The distinction from the stabilized BGN method is already visible in
the discrete tangential equation. For test functions whose nodal
values are orthogonal to the assembled average normals, the normal
mass term vanishes. In the classical scheme, the stiffness of the
velocity then equals the negative stiffness residual of the current
position divided by the time step. This residual generally does not
vanish on a discrete curve. The stabilization in
\cite[(1.5)--(1.8)]{BaiLi2025} cancels it and enforces a homogeneous
tangential equation for the discrete velocity, which is used in that
convergence analysis. The transport result
\cite{BaiGarckeVeerapaneni2026} uses a related stabilization and does
not include the feedback from curvature to velocity. The additional
velocity equation in \cite{HuangLiTang2026} also defines a different
discrete update. The question addressed here is therefore how to
control perturbations of the original BGN update, including its
tangential residual. In particular, neither the full-vector coercivity
available for Dziuk's method nor a separately prescribed or stabilized
tangential equation is available for the scheme studied here.

We use the time-semidiscrete BGN solution as a reference trajectory
to separate temporal and spatial errors. This solution is obtained
from the mixed scheme by using continuous spaces and exact integration.
Eliminating the tangential velocity gives a scalar resolvent for the
normal component. Exact curvature and arclength identities then convert
the geometric update into a scalar diffusion recurrence for curvature
and length. A nonlocal cancellation combines the tangential transport,
the drift of the normalized-arclength coordinate, and the Schur
complement term into the nonlinearity of the exact curvature equation.
Discrete parabolic smoothing first closes a low-Sobolev continuation
argument. A near-contractive high-order estimate then gives uniform
regularity and improves the temporal defects to order $\tau$.
Normalized arclength, together with a marked point and its tangent,
fixes the remaining phase and rigid-motion freedom in the comparison
of parametrizations.

The spatial analysis addresses the missing tangential coercivity at the
level of the step map. Linearization of the time-semidiscrete update
shows that the tangential perturbation is coupled elliptically to the
normal perturbation. The adapted norm therefore measures the normal
perturbation together with the difference between the tangential
perturbation and its elliptically induced part. In this norm the
linearized step has stability
factor $1+C\tau$. This estimate is transferred to the finite element
linearization without replacing the assembled nodal normals. The
Gauss--Lobatto orthogonality and zero-mean cancellations, together with
an exact covariance identity for these normals, improve the local
$H^1$ consistency defect from the standard interpolation order $h^k$
to $h^{k+1}$. An exact second-order difference identity then controls
the nonlinear remainder around the interpolated reference trajectory.

To the best of our knowledge, this paper gives the first fully discrete
error analysis of the classical BGN scheme for smooth closed planar
curve-shortening flow without additional stabilization, including the
original piecewise linear scheme. We also cover its continuous $P^k$
extension for every fixed $k\ge1$, with Gauss--Lobatto mass lumping
and exact stiffness integration. Let $h$ be the size of a periodic
quasi-uniform mesh, and set $\tau=T/M$ and $t_m=m\tau$, with
$M\in\mathbb N$.
Write $\widehat x_h^m$ for the finite element position in the
normalized-arclength coordinate of the time-semidiscrete curve, and
$\widehat X(t_m)$ for the exact normalized-arclength parametrization
with the marked point specified in Section~\ref{sec:reference-maps}.
Under Assumption~\ref{ass:exact-flow}, with nodally interpolated initial
data, Theorem~\ref{un:thm:main} gives
\[
 \max_{0\le m\le M}
 \|\widehat x_h^m-\widehat X(t_m)\|_{W^{1,\infty}}
 \le C(\tau+h^k),\qquad
 0<h\le c\tau^2,\quad 0<\tau\le\tau_0,
\]
where $c,C,\tau_0>0$ are independent of $h,\tau,m$.
All mixed steps are uniquely solvable, and the numerical curves
remain regular and embedded. The same bound controls the Hausdorff
distance. For the time-semidiscrete solution,
Theorem~\ref{up:thm:time} gives first-order convergence of curvature
in $H^\sigma$, length, and the matched position in $C^1$, for
$3/2<\sigma<2$.

The temporal argument gives the matched $C^1$ error $C\tau$. For the
spatial error $e_h^m=x_h^m-I_hx^m$, the one-step consistency,
linearized stability, and nonlinear remainder estimates lead to
the recurrence
\[
 \|e_h^{m+1}\|_{x^{m+1}}
 \le (1+C\tau)\|e_h^m\|_{x^m}+Ch^{k+1}
       +Ch^{-1/2}\|e_h^m\|_{x^m}^2.
\]
The condition $h\le c\tau^2$ converts the derivative-comparison loss
$h/\tau$ into a per-step $O(\tau)$ perturbation. Under the stopping
bound $\|e_h^m\|_{x^m}\le Kh^{k+1}/\tau$, it also gives, since $k\ge1$,
\[
 h^{-1/2}\|e_h^m\|_{x^m}^2
 \le K\frac{h^{k+1/2}}{\tau}\|e_h^m\|_{x^m}
 \le Kc^{3/2}\tau^2\|e_h^m\|_{x^m}.
\]
Thus the quadratic term is included in the linear growth factor
$1+C\tau$. The resulting $H^1$ supercloseness to the reference
interpolant and one inverse estimate give the spatial
$W^{1,\infty}$ error $Ch^k$. Adding the temporal error proves the
estimate in Theorem~\ref{un:thm:main}.

These arguments separate the stability question created by a
normal-only mass form from the consistency estimates for the particular
geometric evolution. For analogous parametric schemes with implicitly
determined tangential motion, the proof identifies three ingredients
that can be checked independently: a scalar reduction of the normal
equation, a norm adapted to the normal--tangential coupling, and a
high-order consistency estimate for the assembled geometric mass form.

The paper is organized as follows.
Section~\ref{sec:setting} states the scheme and main result.
Sections~\ref{sec:time}--\ref{sec:upgrade} establish the time-semidiscrete
estimates, Section~\ref{sec:consistency} proves consistency and
linearized stability, and Section~\ref{sec:nonlinear} completes the
fully discrete error estimate.

\section{The BGN scheme and main results}\label{sec:setting}

\subsection{Geometric setting}\label{sec:geometric-setting}

Throughout, smooth means $C^\infty$ in the indicated variables.
Let $\Gamma(t)\subset\R^2$, $0\le t\le T$, denote the exact closed
curves. Fix $\Torus=\R/\Z$ and a counterclockwise initial
parametrization $x^0:\Torus\to\Gamma(0)$ satisfying
$|\partial_\rho x^0|=|\Gamma(0)|$, from a prescribed point $x^0(0)$.
The material label $\rho$ identifies $x^0(\rho)$ and is retained by
the BGN updates and the finite element mesh.

For a regular counterclockwise parametrization $y:\Torus\to\R^2$,
write $\kappa(\rho)$, $\mathbf n(\rho)$ and $\mathbf t(\rho)$ for the
signed curvature, outward unit normal and unit tangent at $y(\rho)$.
Arclength $s$ is measured from $y(0)$, with
\[
 J(a,b)=(b,-a),\qquad
 \mathbf t=\frac{\partial_\rho y}{|\partial_\rho y|},\qquad
 \mathbf n=J\mathbf t,\qquad
 \partial_s=|\partial_\rho y|^{-1}\partial_\rho.
\]
The curvature is determined by the Frenet identities
\begin{equation}\label{eq:orientation}
 \darc\mathbf{t}=-\kappa\mathbf{n},
 \qquad \darc\mathbf{n}=\kappa\mathbf{t}.
\end{equation}
Our curvature has the opposite sign to the scalar curvature in
\cite[Section~2.2, Lemma~13]{BGN2020}. The normal velocity of
curve-shortening flow is $-\kappa$.
For a parameter function $f=f(\rho)$, integration on
$\Gamma=y(\Torus)$ denotes the weighted parameter integral
\[
 ds=|\partial_\rho y|\,d\rho,\qquad
 \int_\Gamma f\,ds:=\int_0^1 f(\rho)|\partial_\rho y(\rho)|\,d\rho.
\]
Primes on parameter functions denote $\partial_\rho$ and on bilinear
forms denote differentiation with respect to the geometry.

Unless another domain or measure is specified, function spaces and norms
use the displayed coordinate on $\Torus$. We write $H^r=H^r(\Torus)$,
$\|\cdot\|_2=\|\cdot\|_{L^2(\Torus)}$ and
$\|\cdot\|_\infty=\|\cdot\|_{L^\infty(\Torus)}$.
For $f=f(\rho)$ and integers $r\ge0$,
\begin{equation}\label{eq:integer-sobolev-norm}
 \|f\|_{H^r}^2=\sum_{j=0}^r\int_0^1
                   |\partial_\rho^j f(\rho)|^2\,d\rho,
 \qquad \overline f=\int_0^1 f(\rho)\,d\rho.
\end{equation}
For vectors, squared norms are summed over components. This derivative-sum norm is
fixed in estimates with factor $1+C\tau$. Fractional norms use periodic
Fourier weights, with a fixed exponent $\sigma\in(3/2,2)$.
The norms $H^r(ds)$ use arclength derivatives and measure, and
$H^{-1}(ds)$ is dual to $H^1(ds)$ under the arclength pairing.
Constants $C>0$ may vary between estimates, with dependencies stated
in each result. We write $A\asymp B$ for $A\le CB$ and $B\le CA$.

For nonempty compact sets $A,B\subset\R^2$, the Hausdorff distance is
\[
 d_H(A,B)=\max\left\{
 \sup_{a\in A}\inf_{b\in B}|a-b|,
 \sup_{b\in B}\inf_{a\in A}|a-b|
 \right\}.
\]

\subsection{Finite element scheme}

The assembled mixed equations give the position form used in
Section~\ref{sec:consistency}.

Fix an integer $k\ge1$ and a family of periodic quasi-uniform partitions $\mathcal T_h$ of the material domain $\Torus$. If $h=\max_{K\in\mathcal T_h}|K|$, the quasi-uniformity constant is independent of $h$. Let $P^k(K)$ be the polynomials of degree at most
$k$ on $K$. Set
\begin{equation}\label{eq:space}
 V_h=\{\eta_h\in C^0(\Torus):\eta_h|_K\in P^k(K)
              \text{ for all }K\in\mathcal T_h\}.
\end{equation}
On each element we use the $(k+1)$-point Gauss--Lobatto rule. Its positive weights $w_{K,i}$ include the reference-element scale. The rule is exact for polynomials of degree at most $2k-1$. Global nodes, with periodic endpoints identified, are denoted by $\rho_i$.
In elementwise sums, $i\in K$ means that $\rho_i$ is a Gauss--Lobatto
node of $K$, and $K\ni i$ ranges over the elements incident to that node.
Let $I_h$ be interpolation at these nodes and set $x_h^0=I_hx^{0}$.
Thus $x_h^0(\rho_i)=x^0(\rho_i)$, with fixed nodal labels $\rho_i$.

Choose the number of time steps $M\in\mathbb N$ and set
$\tau=T/M$ and $t_m=m\tau$ for $0\le m\le M$. Discrete time levels are written
as superscripts. The subscript $h$ indicates the finite element
discretization, while $i$ and $K$ identify spatial nodes and elements.
Given $x_h^m\in V_h^2$, set $\Gamma_h^m=x_h^m(\Torus)$.
Its elementwise outward unit normal is the parameter function
$\mathbf n_h^m:\Torus\to\R^2$, with one-sided values at element
endpoints, given by
\begin{equation}\label{eq:discrete-geometry}
 \mathbf{n}_h^m=\frac{J(x_h^m)'}{|(x_h^m)'|}.
\end{equation}
The formulas are used when $|(x_h^m)'|>0$. Positivity throughout the computation is a conclusion of the proof. For scalar or vector quantities $f,g$ with one-sided nodal values,
and vector fields $u,\phi\in H^1(\Torus)^2$, define
\begin{align}
 (f,g)_h^m
  &=\sum_{K\in\mathcal T_h}\sum_{i\in K}
      w_{K,i}\bigl|(x_h^m|_K)'(\rho_i)\bigr|
      f|_K(\rho_i)\cdot g|_K(\rho_i),\label{eq:lumped-inner}\\
 a_h^m(u,\phi)
  &=\int_{\Torus}\frac{u'\cdot\phi'}{|(x_h^m)'|}\,d\rho.
       \label{eq:stiffness}
\end{align}
The form $a_h^m$ uses the finite element geometry and exact integration.

For $0\le m<M$, given $x_h^m$, the classical BGN step seeks
$(x_h^{m+1},\kappa_h^{m+1})\in V_h^2\times V_h$.
Here $\kappa_h^{m+1}$ is the scalar unknown of the mixed scheme.
The equations are
\begin{subequations}\label{eq:bgn}
\begin{align}
 v_h^m&=\frac{x_h^{m+1}-x_h^m}{\tau},\label{eq:bgn-velocity}\\
 (v_h^m\cdot\mathbf{n}_h^m,\chi_h)_h^m
  &=-(\kappa_h^{m+1},\chi_h)_h^m
                 &&\text{for all }\chi_h\in V_h,\label{eq:bgn-normal}\\
 (\kappa_h^{m+1}\mathbf{n}_h^m,\phi_h)_h^m
  &=a_h^m(x_h^{m+1},\phi_h)
                 &&\text{for all }\phi_h\in V_h^2.\label{eq:bgn-curvature}
\end{align}
\end{subequations}
The normal and metric are evaluated at time $t_m$.

At a shared node define the total mass and the weighted average normal by
\begin{align}
 \mu_i^m&=\sum_{K\ni i}w_{K,i}\bigl|(x_h^m|_K)'(\rho_i)\bigr|,\label{eq:nodal-mass}\\
 \omega_i^m&=\frac{1}{\mu_i^m}
       \sum_{K\ni i}w_{K,i}\bigl|(x_h^m|_K)'(\rho_i)\bigr|
                    \mathbf{n}_h^m|_K(\rho_i).
                    \label{eq:averaged-normal}
\end{align}
The vector $\omega_i^m$ need not have unit length. Equation~\eqref{eq:bgn-normal} gives
\begin{equation}\label{eq:curvature-elimination}
 \kappa_h^{m+1}(\rho_i)=-v_h^m(\rho_i)\cdot\omega_i^m.
\end{equation}
Consequently, with
\begin{equation}\label{eq:normal-form}
 b_h^m(u,\phi)=\sum_i\mu_i^m
        (u(\rho_i)\cdot\omega_i^m)
        (\phi(\rho_i)\cdot\omega_i^m),
\end{equation}
the equivalent velocity and position equations, for every
$\phi_h\in V_h^2$, are
\begin{align}
 b_h^m(v_h^m,\phi_h)+\tau a_h^m(v_h^m,\phi_h)
     &=-a_h^m(x_h^m,\phi_h),\label{eq:velocity-eliminated}\\
 a_h^m(x_h^{m+1},\phi_h)
       +\tau^{-1}b_h^m(x_h^{m+1}-x_h^m,\phi_h)&=0.
       \label{eq:position-eliminated}
\end{align}
The nodal matrix in $b_h^m$ is $\mu_i^m\omega_i^m\otimes\omega_i^m$. We retain this matrix in the analysis rather than replace it by an average of one-sided normal projectors.

\subsection{Time semidiscretization and parametrizations}\label{sec:reference-maps}

Let $x^m:\Torus\to\R^2$ denote the time-semidiscrete BGN solution
in the material coordinate, starting from $x^0$, and set
$\Gamma^m=x^m(\Torus)$. Thus $x^m(\rho)$ is the position at step $m$
of the particle labelled by $x^0(\rho)$.
Its curvature and frame at $x^m(\rho)$ are denoted by
$\kappa^m(\rho),\mathbf n^m(\rho),\mathbf t^m(\rho)$.
For $0\le m<M$, the velocity and its components in the frame
$(\mathbf n^m,\mathbf t^m)$ are
\[
\begin{aligned}
 v^m(\rho)&=\frac{x^{m+1}(\rho)-x^m(\rho)}{\tau},\\
 v_{\mathrm n}^m&=v^m\cdot\mathbf n^m,\qquad
 v_{\mathrm t}^m=v^m\cdot\mathbf t^m,\\
 v^m&=v_{\mathrm n}^m\mathbf n^m+v_{\mathrm t}^m\mathbf t^m.
\end{aligned}
\]
\paragraph{The position update.}
Curve integrals and $\partial_s$ use the
conventions of Section~\ref{sec:geometric-setting} with $y=x^m$,
including when $\partial_s$ acts on $x^{m+1}$.
The BGN step seeks
$x^{m+1}\in H^1(\Torus;\R^2)$ such that, for every
$\phi\in H^1(\Torus;\R^2)$,
\begin{equation}\label{time:bgn}
 \int_{\Gamma^m}(v^m\cdot\mathbf n^m)
                    (\phi\cdot\mathbf n^m)\,ds
 +\int_{\Gamma^m}\partial_s x^{m+1}\cdot\partial_s\phi\,ds=0.
\end{equation}

For a regular input $x$ for which this problem is uniquely solvable,
$\mathcal F_\tau(x)$ denotes its solution with $x^m$ replaced by $x$.
Likewise, $\mathcal F_{h,\tau}$ is the finite element position solution
operator defined by \eqref{eq:position-eliminated}. Thus
\begin{equation}\label{eq:step-operators}
 x^{m+1}=\mathcal F_\tau(x^m),\qquad
 x_h^{m+1}=\mathcal F_{h,\tau}(x_h^m).
\end{equation}
Section~\ref{sec:consistency} specifies their solvability neighborhoods
and estimates their derivatives with respect to the input curve.

\paragraph{Material and normalized representations.}
For each regular $x^m$, define $\chi^m:\Torus\to\Torus$ by its
lift on $[0,1]$:
\begin{equation}\label{eq:normalized-arclength-coordinate}
 \begin{aligned}
 |\Gamma^m|&=\int_0^1|\partial_\rho x^m(\rho)|\,d\rho,\quad s=\int_0^\rho|\partial_\eta x^m(\eta)|\,d\eta,
 \quad \xi=\chi^m(\rho)=\frac{s}{|\Gamma^m|}.
 \end{aligned}
\end{equation}
Here $s$ is arclength from $x^m(0)$ and $\xi$ is normalized arclength.
Since $(\chi^m)'>0$ and $\chi^m(0)=0$, $\chi^m(1)=1$, periodic
extension gives an orientation-preserving diffeomorphism.
The inverse $(\chi^m)^{-1}:\Torus\to\Torus$ sends $\xi$ to $\rho$.

A hat denotes a normalized-arclength representation. For any scalar
or vector field $g^m$ defined in the material parameter, set
\[
 \widehat g^m:=g^m\circ(\chi^m)^{-1},\qquad
 \widehat g^m(\chi^m(\rho))=g^m(\rho).
\]
Thus $\widehat\kappa^m(\xi)$ is the curvature at
$\widehat x^m(\xi)=x^m(\rho)$ when $\xi=\chi^m(\rho)$.
The same convention applies to the frame, velocity and its components.
Unhatted BGN fields always retain their material representation, even
when the argument is omitted. By
\eqref{eq:normalized-arclength-coordinate},
\begin{equation}\label{eq:reference-parametrization}
 \widehat x^m(0)=x^m(0),\qquad
 |\partial_\xi\widehat x^m|=|\Gamma^m|.
\end{equation}
Norms and means of $g^m$ use $\rho$, whereas those of $\widehat g^m$
use $\xi$. These coordinate norms generally differ. Arclength
integration and differentiation satisfy
\[
 ds=|\partial_\rho x^m|\,d\rho=|\Gamma^m|\,d\xi,\qquad
 \widehat{\partial_s g^m}
 =|\Gamma^m|^{-1}\partial_\xi\widehat g^m.
\]
On hatted fields we therefore write
$\partial_s=|\Gamma^m|^{-1}\partial_\xi$.
The convention also applies to finite element fields $g_h^m$ with the
same map $\chi^m$ defined by the time-semidiscrete solution.
In particular, $\widehat x_h^m$ need not
have constant parameter speed. Initially $\chi^0=\id$, so
$\widehat g^0=g^0$.

\paragraph{The exact parametrization.}
Let $\widehat X(\cdot,t):\Torus\to\Gamma(t)$ be the positively oriented
normalized-arclength parametrization which is proved to be the limit of BGN scheme.
Define $\widehat\kappa(\xi,t)$, $\widehat{\mathbf n}(\xi,t)$ and
$\widehat{\mathbf t}(\xi,t)$ directly as the curvature, outward unit
normal and unit tangent at $\widehat X(\xi,t)$.
Here $s=|\Gamma(t)|\xi$ is measured from $\widehat X(0,t)$, with
$ds=|\Gamma(t)|\,d\xi$ and $\partial_s=|\Gamma(t)|^{-1}\partial_\xi$.

The limiting BGN velocity is
$-\widehat\kappa\widehat{\mathbf n}+\widehat\alpha\widehat{\mathbf t}$,
where $\widehat\alpha(\cdot,t)\in H^1(\Torus)$ minimizes the velocity
Dirichlet energy \cite[(1.9) and Section~4.4]{BaiLi2025}:
\[
 \widehat\alpha=\operatorname*{argmin}_{\beta\in H^1(\Torus)}
 \frac12\int_{\Gamma(t)}
 \bigl|\partial_s(-\widehat\kappa\widehat{\mathbf n}
                 +\beta\widehat{\mathbf t})\bigr|^2\,ds.
\]
Tangential variation and \eqref{eq:orientation} give
\begin{equation}\label{eq:exact-tangential-velocity}
 -\partial_s^2\widehat\alpha+\widehat\kappa^2\widehat\alpha
 =-3\widehat\kappa\,\partial_s\widehat\kappa
 \qquad\text{for }\xi\in\Torus.
\end{equation}
Since $\widehat\kappa\not\equiv0$ on a regular closed curve, this periodic
problem uniquely determines $\widehat\alpha$. We fix $\widehat X$ by
\begin{equation}\label{eq:exact-marked-point}
 \partial_t\widehat X(0,t)
 =(-\widehat\kappa\widehat{\mathbf n}
     +\widehat\alpha\widehat{\mathbf t})(0,t),\quad
 \widehat X(0,0)=x^0(0),\quad
 |\partial_\xi\widehat X(\xi,t)|=|\Gamma(t)|.
\end{equation}
Time derivatives are taken at fixed $\xi$. To preserve normalized
arclength, $\partial_t\widehat X$ has the tangential component
\eqref{time:alphaH}, which agrees with $\widehat\alpha$ at the marked point.
Sections~\ref{time:intrinsic-section} and~\ref{up:sec:first-order}
justify this choice.
We abbreviate $\widehat X(t)=\widehat X(\cdot,t)$, and similarly for
the exact fields.

We impose the following standing assumption on the exact solution
for the convergence results.
\begin{assumption}[Exact flow]\label{ass:exact-flow}
The family $\Gamma(t)$ consists of regular embedded closed curves
evolving by curve-shortening flow on $[0,T]$, with $T$ before the
first singular time. Its counterclockwise normalized-arclength
parametrization $\widehat X$, fixed by \eqref{eq:exact-marked-point},
satisfies
\[
 \widehat X\in C^\infty(\Torus\times[0,T];\R^2),\qquad
 x^0=\widehat X(\cdot,0).
\]
\end{assumption}

The spatial error is $x_h^m-x^m$, the temporal error is
$\widehat x^m-\widehat X(t_m)$, and the matched total error is
$\widehat x_h^m-\widehat X(t_m)$.
Figure~\ref{fig:parametrizations} shows the corresponding maps.
\begin{figure}[H]
\centering
\begin{tikzcd}[column sep=4em,row sep=3.3em]
 \Gamma(0)
   \arrow[dr,"x^m\circ(x^0)^{-1}"']
 & \underset{\text{material }\rho}{\Torus}
   \arrow[l,"x^0"'] \arrow[r,"\chi^m"] \arrow[d,"x^m"]
 & \underset{\text{normalized }\xi}{\Torus}
   \arrow[dl,"\widehat x^m"] \\
 & \Gamma^m &
\end{tikzcd}
\par\medskip
\begin{tikzcd}[column sep=4em,row sep=3.3em]
 \underset{\text{material }\rho}{\Torus}
   \arrow[r,"\chi^m"] \arrow[dr,"x_h^m"']
 & \underset{\text{normalized }\xi}{\Torus}
   \arrow[r,"\widehat X(t_m)"] \arrow[d,"\widehat x_h^m"]
 & \Gamma(t_m) \\
 & \Gamma_h^m &
\end{tikzcd}
\caption{Time-semidiscrete parametrizations and inherited material labels (top),
and the maps used for numerical--exact comparison (bottom).}
\label{fig:parametrizations}
\end{figure}

\subsection{Main result}

The main estimate combines temporal and spatial errors.

\begin{theorem}[Fully discrete error estimate]\label{un:thm:main}
Under Assumption~\ref{ass:exact-flow}, for every fixed $k\ge1$,
there exist positive constants
$c,C,\tau_0$, independent of $h,\tau,m$, such that if
\begin{equation}\label{un:eq:coupling}
 0<\tau=T/M\le\tau_0,\qquad 0<h\le c\tau^2,
\end{equation}
all original mixed BGN steps exist uniquely and their curves remain
regular and embedded. The matched error satisfies
\begin{equation}\label{un:eq:total}
 \max_{m\le M}
 \|\widehat x_h^m-\widehat X(t_m)\|_{\W}\le C(\tau+h^k).
\end{equation}
The same bound controls the matched $H^1$, $L^2$, and $L^\infty$
errors and the Hausdorff distance of the curves.
Constants may depend
on the prescribed smooth flow, $T$, the fixed degree, and the mesh family.
\end{theorem}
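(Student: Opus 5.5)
The estimate \eqref{un:eq:total} will be proved by splitting the matched error at each step through the time-semidiscrete reference trajectory:
\[
 \widehat x_h^m-\widehat X(t_m)
 =\bigl(\widehat x_h^m-\widehat x^m\bigr)+\bigl(\widehat x^m-\widehat X(t_m)\bigr).
\]
The second term is the temporal error. It is controlled by Theorem~\ref{up:thm:time}, which gives the matched $C^1$ bound $C\tau$. The same theorem supplies uniform high-order regularity of $x^m$, uniform two-sided bounds on $|\partial_\rho x^m|$, and uniform embeddedness of $\Gamma^m$. The first term is a reparametrization by the fixed diffeomorphism $\chi^m$ of the material spatial error $x_h^m-x^m$. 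Since $(\chi^m)'$ is bounded above and below uniformly in $m$, it suffices to bound $\|x_h^m-x^m\|_{W^{1,\infty}}\le Ch^k$. I will further split
\[
 x_h^m-x^m=e_h^m+(I_hx^m-x^m),\qquad e_h^m=x_h^m-I_hx^m .
\]
The interpolation part is $O(h^k)$ in $W^{1,\infty}$ by the uniform regularity of $x^m$.

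\textbf{Step 1: the recurrence for $e_h^m$.} I will use the consistency and linearized-stability results of Section~\ref{sec:consistency} in the adapted normal--tangential norm $\|\cdot\|_{x^m}$, which is equivalent to $H^1$ uniformly in $m$. The one-step defect $\mathcal F_{h,\tau}(I_hx^m)-I_hx^{m+1}$ is $O(h^{k+1})$ in $H^1$. The derivative $D\mathcal F_{h,\tau}(I_hx^m)$ has norm at most $1+C\tau$ from $\|\cdot\|_{x^m}$ to $\|\cdot\|_{x^{m+1}}$. For the nonlinear remainder I will use the second-order difference identity of Section~\ref{sec:nonlinear}, which gives the bound $Ch^{-1/2}\|e_h^m\|^2$ after an inverse estimate $\|e\|_{W^{1,\infty}}\le Ch^{-1/2}\|e\|_{H^1}$. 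Writing
\[
 e_h^{m+1}=\bigl[\mathcal F_{h,\tau}(x_h^m)-\mathcal F_{h,\tau}(I_hx^m)\bigr]+\bigl[\mathcal F_{h,\tau}(I_hx^m)-I_hx^{m+1}\bigr]
\]
then yields the recurrence displayed in the introduction.

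\textbf{Step 2: a stopping-index induction.} Let $m_*$ be the largest index with $\|e_h^m\|_{x^m}\le Kh^{k+1}/\tau$ for all $m\le m_*$. For $m\le m_*$ the inverse estimate gives
\[
 \|e_h^m\|_{W^{1,\infty}}\le CKh^{k+1/2}/\tau\le CKc^{1/2}h^{k}\tau^0 .
\]
Choosing $c$ small, $x_h^m$ therefore stays in the solvability neighbourhood of $\mathcal F_{h,\tau}$ where the stability and remainder estimates hold. It also keeps $|(x_h^m)'|$ bounded below, so the step is uniquely solvable, the nodal masses $\mu_i^m$ stay positive, and the curves are regular. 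The quadratic term is absorbed as in the introduction, using $h\le c\tau^2$. Any loss $h/\tau$ from comparing the norms $\|\cdot\|_{x^m}$ and $\|\cdot\|_{x^{m+1}}$ is at most $c\tau$ per step. Discrete Gronwall then gives
\[
 \|e_h^{m}\|_{x^m}\le e^{CT}\,C\,M h^{k+1}=C'h^{k+1}/\tau .
\]
With $K=2C'$ the bootstrap closes, so $m_*=M$. A final inverse estimate, combined with $h\le c\tau^2$ (which gives $h^{k+1/2}/\tau\le Ch^{k}$), yields $\|e_h^m\|_{W^{1,\infty}}\le Ch^k$.

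\textbf{Step 3: embeddedness and the remaining norms.} Embeddedness follows from the uniform bi-Lipschitz property of $\widehat X(t_m)$, which has a chord-arc constant bounded below by smoothness and embeddedness on $[0,T]$, combined with a small $W^{1,\infty}$ perturbation once $\tau_0$ and $c$ are small. A $W^{1,\infty}$-small perturbation of a map with chord-arc constant $\delta$ keeps the chord-arc property, hence injectivity. The $H^1$, $L^2$ and $L^\infty$ bounds are dominated by the $W^{1,\infty}$ bound on $\Torus$. For the Hausdorff distance, each point $\widehat x_h^m(\xi)$ lies within the $L^\infty$ error of $\widehat X(\xi,t_m)$ and conversely, so it is bounded by the $L^\infty$ error.

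\textbf{Main obstacle.} The delicate step is Step 2, namely verifying that the stability factor $1+C\tau$ applies along the numerical trajectory. It is proved at the interpolated reference $I_hx^m$, so the bootstrap must guarantee closeness in $W^{1,\infty}$, not only in $H^1$. The change of adapted norm between consecutive steps must also cost only $O(\tau)$. I will first try to obtain both from the smoothness in time of the semidiscrete trajectory, using $\|x^{m+1}-x^m\|_{W^{2,\infty}}\le C\tau$ from Theorem~\ref{up:thm:time}, together with the choice of $c$.
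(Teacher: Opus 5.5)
Your proposal is correct and follows the paper's proof essentially step for step: the same splitting $x_h^m-x^m=e_h^m+(I_hx^m-x^m)$, the same recurrence from Lemmas~\ref{un:lem:defect} and~\ref{un:lem:remainder} together with \eqref{un:eq:discrete-stability}, the same stopping bound $\|e_h^m\|_{x^m}\le Kh^{k+1}/\tau$ with the quadratic term absorbed via $h\le c\tau^2$, and the same inverse estimate back to $W^{1,\infty}$; your chord-arc perturbation argument for embeddedness is a clean variant of the paper's near/far pair split. Two small corrections: the $h/\tau$ loss comes from comparing the fully discrete linearization with the interpolated continuous one (Lemma~\ref{un:lem:derivative}), not from changing the adapted norm, which costs only $O(\tau)$ by Proposition~\ref{un:prop:continuous-stability}; and the uniform material regularity of $x^m$ and the bounds on $(\chi^m)'$ come from Lemma~\ref{reg:lem:regularity}, not from Theorem~\ref{up:thm:time}.
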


The proof is given in Section~\ref{sec:nonlinear}.
The mesh condition suffices for the derivative comparison and induction
argument. Decreasing $\tau_0$ enforces all fixed small-mesh thresholds.
\section{Analysis of the time-semidiscrete solution}\label{sec:time}

Under Assumption~\ref{ass:exact-flow}, we
prove existence and preliminary error estimates for the time-semidiscrete
solution of \eqref{time:bgn} on $[0,T]$ by controlling its curvature
and length in a low Sobolev norm. These bounds supply the higher
regularity and first-order convergence proof in Section~\ref{sec:upgrade}.

Write $\ell^m=|\Gamma^m|$ for the length of the time-semidiscrete
curve whenever $x^m$ is defined, $0\le m\le M$.

\begin{theorem}[Continuation and preliminary convergence]\label{time:theorem}
Suppose Assumption~\ref{ass:exact-flow} holds, and let
\[
 \frac32<\sigma<2.
\]
There exist $\tau_0>0$ and $C>0$, depending on the exact solution,
$T$, and $\sigma$, such that, for $0<\tau\le\tau_0$, every update
\eqref{time:bgn} with $(m+1)\tau\le T$ has a unique solution
$x^{m+1}\in C^\infty(\Torus;\R^2)$, and $x^{m+1}$ is a regular
embedding. 

For the curvature profiles and lengths defined in
Section~\ref{sec:reference-maps}, we have
\begin{equation}\label{time:curvature-convergence}
 \max_{m\tau\le T}
 \bigl(
   \|\widehat\kappa^m-\widehat\kappa(t_m)\|_{H^\sigma(\Torus)}
   +\bigl|\ell^m-|\Gamma(t_m)|\bigr|
 \bigr)
 \le C\tau^{(\sigma-1)/2}.
\end{equation}
\end{theorem}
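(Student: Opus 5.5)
We propose to prove Theorem~\ref{time:theorem} by a continuation argument on the scalar pair $(\widehat\kappa^m,\ell^m)$. The analysis proceeds in normalized arclength, where the BGN update reduces to a scalar parabolic recurrence. The plan has four steps.

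\textbf{Step 1 (solvability of one step).} Fix a regular smooth input $x=x^m$ with $\|\widehat\kappa^m\|_{H^\sigma}\le K$ and $\ell^m\in[\ell_*,\ell^*]$. The bilinear form in \eqref{time:bgn}, rewritten for $x^{m+1}$, is
\[
\tau^{-1}\int(\cdot\,\mathbf n^m)(\cdot\,\mathbf n^m)\,ds+\int\partial_s\cdot\,\partial_s\cdot\,ds .
\]
It is coercive on $H^1$. The Dirichlet part controls everything modulo constants. A constant vector $c$ gives $\int(c\cdot\mathbf n)^2\,ds>0$, because $\mathbf n^m$ covers the circle on a closed curve. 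Lax--Milgram then gives a unique $x^{m+1}$, and elliptic regularity with smooth coefficients gives $x^{m+1}\in C^\infty$.

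\textbf{Step 2 (scalar normal resolvent).} Split $v^m=v_{\mathrm n}\mathbf n+v_{\mathrm t}\mathbf t$. Testing with tangential fields $\phi=\beta\mathbf t$ yields an elliptic equation $-\partial_s^2 v_{\mathrm t}+\kappa^2 v_{\mathrm t}=\ldots$, a discrete analogue of \eqref{eq:exact-tangential-velocity}, with data from $v_{\mathrm n}$ and $\kappa$. Eliminating $v_{\mathrm t}$ leaves a scalar equation
\[
(1+\tau\mathcal A^m)v_{\mathrm n}=-\kappa^m+\tau\mathcal R^m ,
\]
where $\mathcal A^m\approx-\partial_s^2$ plus lower-order terms, including the Schur complement. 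From this I compute the curvature and length of $x^{m+1}$ exactly, using $\partial_s x^{m+1}=\mathbf t+\tau\partial_s v$. Reparametrizing by the new normalized arclength $\chi^{m+1}$, I aim for
\[
\widehat\kappa^{m+1}=(1-\tau\ell^{-2}\partial_\xi^2)^{-1}\bigl(\widehat\kappa^m+\tau N(\widehat\kappa^m,\ell^m)\bigr)+\text{remainder},
\]
together with $\ell^{m+1}=\ell^m-\tau\int\kappa^2\,ds+O(\tau^2)$. Here $N$ is the $\kappa^3$ nonlinearity of the exact flow. The drift of $\chi$, the tangential transport term and the Schur complement should combine into $N$; this is the nonlocal cancellation announced in the introduction.

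\textbf{Step 3 (error recurrence and continuation).} Subtract the analogous implicit-Euler-type relation satisfied by $\widehat\kappa(t_m)$ and $|\Gamma(t_m)|$. The exact solution inserted into the recurrence has a consistency defect. In $H^\sigma$ this defect is only controlled through the smoothing of the discrete resolvent: $\|(1-\tau\partial^2)^{-1}\|_{H^{\sigma-2}\to H^\sigma}\sim\tau^{-1}$, and interpolation gives local defects of size $\tau^{1+(\sigma-1)/2}$ after summation against the discrete parabolic semigroup. Discrete maximal regularity/smoothing estimates for $\prod(1-\tau\ell_j^{-2}\partial^2)^{-1}$ then give a Gronwall bound
\[
\|e^m\|_{H^\sigma}\le C\tau^{(\sigma-1)/2}
\]
as long as the a priori bound $\|\widehat\kappa^m\|_{H^\sigma}\le2\max\|\widehat\kappa\|$ holds. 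Since $\sigma>3/2$, $H^\sigma$ is an algebra that controls $C^1$, which makes the nonlinear terms Lipschitz. Smallness of $\tau_0$ closes the bootstrap.

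\textbf{Step 4 (regularity and embeddedness).} From the curvature bound, reconstruct the tangent angle and position. The marked point and tangent fix the rigid motion. This yields a $C^1$ closeness of $\widehat x^m$ to $\widehat X(t_m)$, so the curve is a regular embedding, since the exact curves are uniformly embedded.

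The main obstacle is Step 2. Deriving the exact curvature recurrence in normalized arclength with a controlled remainder requires the cancellation among tangential transport, $\chi$-drift and the Schur term to be verified exactly. If that cancellation fails, a first-order derivative term survives and the smoothing argument in $H^\sigma$ would not close.
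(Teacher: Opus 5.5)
Your architecture matches the paper's: elimination of $v_{\mathrm t}$, exact curvature and length identities, discrete smoothing, continuation, and Frenet reconstruction. Step~1 via Lax--Milgram is fine, and so is Step~4, provided the marked point and tangent are matched by a rigid alignment. The gap is that the ``remainder'' in Step~2 is never estimated, and Step~3 puts the loss in the wrong term. By Assumption~\ref{ass:exact-flow} the exact curvature is $C^\infty$, so its consistency defect in an implicit-Euler recurrence is $O(\tau)$ in $L^2$ and produces no fractional rate. The rate $\tau^{(\sigma-1)/2}$ comes from the semidiscrete defect. That defect must be bounded using only the bootstrap information $(\widehat\kappa^m,\ell^m)\in\mathcal U(R)$, including the lower bound $\|\widehat\kappa^m\|_{L^2}\ge c_0$ that makes $\Lop$ uniformly coercive. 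This requires the spectral bound $\|\widehat\kappa^m+\widehat v_{\mathrm n}^m\|_{H^r}=\|\tau\Sop^m_{\widehat\kappa^m}(I+\tau\Sop^m_{\widehat\kappa^m})^{-1}\widehat\kappa^m\|_{H^r}\le C_R\tau^{(\sigma-r)/2}$ for $0\le r\le\sigma$. It also requires $\|\widehat v_{\mathrm n}^m\|_{H^2}\le C_R\tau^{-(2-\sigma)/2}$, $\|\widehat v_{\mathrm n}^m\|_{H^{\sigma+1}}\le C_R\tau^{-1/2}$, $\|\widehat v_{\mathrm t}^m\|_{H^{\sigma+1}}\le C_R$, and $\|\chi^{m+1}\circ(\chi^m)^{-1}-\id\|_{H^{\sigma+1}}\le C_R\tau$ (Lemmas~\ref{time:local-estimates} and~\ref{time:intermediate-equation}). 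The binding contribution is $C_R\|\widehat\kappa^m+\widehat v_{\mathrm n}^m\|_{H^1}\le C_R\tau^{(\sigma-1)/2}$. This is the cost of evaluating the nonlinearity at the computed normal velocity instead of at the curvature. It is an $L^2$ defect, summed with smoothing exponent $\beta=\sigma$, which is integrable only because $\sigma<2$.

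The recurrence you aim for also has the wrong structure; the paper avoids this by taking $\widehat v_{\mathrm n}^m$, not $\widehat\kappa^m$, as the state. The geometric update gives $\widehat\kappa^{m+1}=(I+\tau\Sop^m_{\widehat\kappa^m})^{-1}\widehat\kappa^m+\tau G^m+\tau d_{\mathrm{geom}}^m$. Here $G^m$ contains $(\widehat{\mathbf t}^m\cdot\partial_s\widehat v^m)\widehat v_{\mathrm n}^m$ and a nonlocal primitive times $\partial_\xi\widehat v_{\mathrm n}^m$, and it arrives after the resolvent of step $m$. Moving it inside $(I-\tau\ell^{-2}\partial_\xi^2)^{-1}$, as your formula does, leaves $\tau[I-(I-\tau\ell^{-2}\partial_\xi^2)^{-1}]G^m$. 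That operator does not smooth, so this term is bounded in $H^\sigma$ only by $C\tau\|\widehat v_{\mathrm n}^m\|_{H^{\sigma+1}}\le C\tau^{1/2}$ per step. That bound does not sum over $T/\tau$ steps, and the term is not an $L^2$ defect under a resolvent. The paper instead inserts $\widehat\kappa^{m+1}=-\widehat v_{\mathrm n}^m-\tau(\widehat v_{\mathrm n}^m)^3-\tau\Kop^m_{\widehat v_{\mathrm n}^m}\widehat v_{\mathrm n}^m-\tau\mathcal N(\widehat v_{\mathrm n}^m,\ell^m)+\tau d_\kappa^m$ into the resolvent of step $m+1$, so every geometric correction is smoothed. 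The price is that $(\widehat\kappa^{m+1},\ell^{m+1})$ must first be placed in an enlarged set $\mathcal U(R_{\mathrm{ext}})$ before the error at step $m+1$ is known.

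Finally, $N$ is not just $\kappa^3$. In normalized arclength the cancellation yields $\mathcal N(f,\ell)=f^3+\ell^{-1}\bigl[-(\Lop_{f,\ell}^{-1}\Dop_{f,\ell}f)(0)+\ell\int_0^\xi\bigl(f(\zeta)^2-\int_0^1f(\eta)^2\,d\eta\bigr)\,d\zeta\bigr]\partial_\xi f$. Its transport coefficient at $\xi=0$ equals $\widehat\alpha(0,t)$ when $f=\widehat\kappa$, which is why the exact marked point is fixed by \eqref{eq:exact-marked-point}. This surviving first-order term is harmless: $\mathcal N$ is Lipschitz from $H^\sigma$ to $H^{\sigma-1}$ on $\mathcal U(R)$, and smoothing with $\beta=1$ gives the integrable kernel $(t_m-t_j)^{-1/2}$. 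So the failure criterion you state is not a real obstruction, whereas insisting on a pure $\kappa^3$ cancellation would fail.
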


The proof in Section~\ref{time:continuation-section} proceeds by
induction on the time index, using the velocity and curvature bounds
established in Lemma~\ref{time:local-estimates} and its proof,
the defect bounds in Lemma~\ref{time:intermediate-equation}, and the
stability estimate in Lemma~\ref{time:prefix-error}.

\subsection{Velocity representation and geometric identities}\label{time:intrinsic-section}

We introduce the scalar operators through the time-semidiscrete velocity equations,
then derive the curvature and length identities used in
Section~\ref{time:local-estimates-section}.

For each available length $\ell^m$, $0\le m\le M$, and each nonzero
scalar profile $f\in H^\sigma(\Torus;\R)$, define
\begin{align}
 \Lop_f^m&=-(\ell^m)^{-2}\partial_\xi^2+f^2,
 &\Dop_f^m&=(\ell^m)^{-1}(\partial_\xi f+2f\partial_\xi),
 \label{time:LD}\\
 \Kop_f^m&=\Dop_f^m(\Lop_f^m)^{-1}\Dop_f^m,
 &\Sop_f^m&=\Lop_f^m+\Kop_f^m.
 \label{time:KS}
\end{align}
The subscript specifies the coefficient profile, and the superscript
specifies the length $\ell^m$. In $\Dop_f^m$, $\partial_\xi f$ acts by
multiplication. These operators arise from the normal and tangential
components of the BGN equation. Eliminating the tangential component
gives the following representation. The required invertibility and
operator bounds are proved in Lemma~\ref{time:operator-bounds} below.

\begin{lemma}\label{time:exact-step}
Let $x^m\in C^\infty(\Torus;\R^2)$ be a regular closed parametrization
and let $\tau>0$. Then \eqref{time:bgn} has a unique
position solution $x^{m+1}\in C^\infty(\Torus;\R^2)$.
Its velocity components satisfy
\begin{equation}\label{time:normal-resolvent}
 \widehat v_{\mathrm n}^m=-(I+\tau\Sop_{\widehat\kappa^m}^m)^{-1}\widehat\kappa^m,\quad \widehat v_{\mathrm t}^m=(\Lop_{\widehat\kappa^m}^m)^{-1}\Dop_{\widehat\kappa^m}^m \widehat v_{\mathrm n}^m.
\end{equation}
\end{lemma}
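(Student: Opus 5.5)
The plan is to obtain existence and uniqueness from a coercive variational problem, derive the strong form of \eqref{time:bgn} in arclength, and then split it into normal and tangential components with the Frenet identities \eqref{eq:orientation}; eliminating the tangential component gives \eqref{time:normal-resolvent}.

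\emph{Existence, uniqueness and smoothness.} I would write \eqref{time:bgn} in the velocity unknown $v=(x^{m+1}-x^m)/\tau$:
\[
 \int_{\Gamma^m}(v\cdot\mathbf n^m)(\phi\cdot\mathbf n^m)\,ds
 +\tau\int_{\Gamma^m}\partial_s v\cdot\partial_s\phi\,ds
 =-\int_{\Gamma^m}\partial_s x^m\cdot\partial_s\phi\,ds .
\]
The bilinear form on the left is bounded and symmetric on $H^1(\Torus;\R^2)$.

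To show it is coercive, argue by contradiction with a normalized sequence and Rellich compactness. A limit with zero energy satisfies $\partial_s v=0$, so $v$ is a constant vector $c$. It also satisfies $c\cdot\mathbf n^m=0$ on all of $\Gamma^m$. This forces $c=0$, because the normals of a closed regular curve span $\R^2$ (the tangent turns by $2\pi$).

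Lax--Milgram then gives a unique solution. Smoothness follows by a standard elliptic bootstrap of the resulting system in the variable $s$, since all coefficients are smooth when $x^m$ is smooth and regular.

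\emph{Strong form and componentwise splitting.} Integrating by parts and using $\partial_s^2x^m=\partial_s\mathbf t^m=-\kappa^m\mathbf n^m$, the right-hand side equals $\int\kappa^m\mathbf n^m\cdot\phi\,ds$. The strong form is therefore
\[
 v_{\mathrm n}^m\mathbf n^m-\tau\partial_s^2v^m=-\kappa^m\mathbf n^m .
\]
Write $v^m=v_{\mathrm n}\mathbf n+v_{\mathrm t}\mathbf t$. The Frenet formulas give $\partial_sv=A\mathbf n+B\mathbf t$ with
\[
 A=\partial_sv_{\mathrm n}-\kappa v_{\mathrm t},\qquad B=\partial_sv_{\mathrm t}+\kappa v_{\mathrm n},
\]
and then $\partial_s^2v=(\partial_sA-\kappa B)\mathbf n+(\partial_sB+\kappa A)\mathbf t$.

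The tangential component of the strong form is $\partial_sB+\kappa A=0$. Expanded, this reads
\[
 (-\partial_s^2+\kappa^2)v_{\mathrm t}=(\partial_s\kappa+2\kappa\partial_s)v_{\mathrm n}.
\]
The normal component is $v_{\mathrm n}-\tau(\partial_sA-\kappa B)=-\kappa$. A direct computation gives
\[
 \partial_sA-\kappa B=-(-\partial_s^2+\kappa^2)v_{\mathrm n}-(\partial_s\kappa+2\kappa\partial_s)v_{\mathrm t}.
\]

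Pass to normalized arclength using $\partial_s=(\ell^m)^{-1}\partial_\xi$ on hatted fields. The two equations become
\[
 \Lop^m_{\widehat\kappa^m}\widehat v_{\mathrm t}^m=\Dop^m_{\widehat\kappa^m}\widehat v_{\mathrm n}^m,
 \qquad
 \widehat v_{\mathrm n}^m+\tau\bigl(\Lop^m_{\widehat\kappa^m}\widehat v_{\mathrm n}^m+\Dop^m_{\widehat\kappa^m}\widehat v_{\mathrm t}^m\bigr)=-\widehat\kappa^m .
\]

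\emph{Elimination.} The operator $\Lop^m_{\widehat\kappa^m}$ is invertible: it is nonnegative, and its kernel is trivial because $\int(|\partial_s f|^2+\kappa^2f^2)=0$ forces $f$ to be constant with $\kappa f\equiv0$, while $\kappa\not\equiv0$ on a closed curve. Hence $\widehat v_{\mathrm t}^m=(\Lop^m)^{-1}\Dop^m\widehat v_{\mathrm n}^m$. Substituting this into the normal equation gives $(I+\tau\Sop^m)\widehat v_{\mathrm n}^m=-\widehat\kappa^m$.

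\emph{Main obstacle.} The delicate point is the invertibility of $I+\tau\Sop^m$. Since $\Dop^*=-\Dop$, the term $\Kop=-\Dop^*\Lop^{-1}\Dop$ is nonpositive, so $\Sop$ is not obviously positive. I would avoid estimating $\Sop$ directly. Instead, I would identify $\Sop$ as the Schur complement of the block operator $\begin{pmatrix}\Lop&\Dop\\-\Dop&\Lop\end{pmatrix}$, whose quadratic form is $\int(A^2+B^2)\,ds=\int|\partial_sv|^2\,ds\ge0$.

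Uniqueness for $I+\tau\Sop$ then follows from the vector uniqueness already established. Suppose $(I+\tau\Sop)w=0$, and set $v=w\mathbf n+(\Lop^{-1}\Dop w)\mathbf t$. Then $v$ solves the homogeneous vector problem, so $v=0$ and hence $w=0$. Surjectivity follows in the same way from the existence of the vector solution, so the resolvent in \eqref{time:normal-resolvent} is well defined. Quantitative operator bounds are deferred to Lemma~\ref{time:operator-bounds}.
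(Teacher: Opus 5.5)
Your proof is correct. The derivation of the two component equations and the elimination agree with the paper; the difference is in how solvability is obtained.

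\textbf{The paper's direction (scalar to vector).} Lemma~\ref{time:operator-bounds} shows that $\Sop$ is self-adjoint, as a bounded perturbation of $\Lop$. It shows that $\Sop$ is nonnegative through the sum-of-squares identity \eqref{time:S-form}. That identity is exactly the quadratic form $\int(A^2+B^2)$ of your block operator, evaluated at $(g,\Lop^{-1}\Dop g)$. Hence $I+\tau\Sop$ is invertible. The vector velocity is then built by pulling back the scalar components; it satisfies the strong form and hence \eqref{time:bgn}. Uniqueness comes from the representation.

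\textbf{Your direction (vector to scalar).} You apply Lax--Milgram to the vector problem, with coercivity from compactness and the spanning of the normals. You then inherit injectivity and surjectivity of $I+\tau\Sop$ from uniqueness and existence for the vector problem with forcing $g\mathbf n$. For surjectivity you implicitly use $H^2$ regularity of that vector solution.

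\textbf{What each buys.} Your route is more self-contained for this lemma and needs no operator theory for $\Sop$. The paper's route also yields the nonnegative self-adjoint structure and explicit, quantitative coercivity. These are what the spectral-calculus estimates in Lemma~\ref{time:local-estimates} and the uniform bounds on $\mathcal U(R)$ require. Your compactness argument gives only a non-quantitative constant depending on the curve. Since you already wrote down the Schur-complement form, you have the nonnegativity as well.

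\textbf{Two small corrections.} First, after integration by parts the right-hand side is $-\int\kappa^m\mathbf n^m\cdot\phi\,ds$, because $\partial_s^2x^m=-\kappa^m\mathbf n^m$. It is not $+\int\kappa^m\mathbf n^m\cdot\phi\,ds$. The strong form you then write is nevertheless correct.

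Second, ``the tangent turns by $2\pi$'' is the Umlaufsatz for simple curves. The statement only assumes a regular closed parametrization, whose rotation index may be $0$. The spanning claim still holds: if $c\cdot\mathbf n^m\equiv0$ with $c\neq0$, the tangent is constant. Then $x^m$ moves strictly monotonically along a line and cannot close.
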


\begin{proof}
The strong form of \eqref{time:bgn} is
\begin{equation}\label{time:strong-bgn}
 \partial_s^2x^{m+1}=v_{\mathrm n}^m\mathbf n^m.
\end{equation}
Substitute $x^{m+1}=x^m+\tau v^m$ and use \eqref{eq:orientation}.
The normal and tangential components satisfy
\[
 (I+\tau\Lop_{\widehat\kappa^m}^m)\widehat v_{\mathrm n}^m
   =-\widehat\kappa^m-\tau\Dop_{\widehat\kappa^m}^m \widehat v_{\mathrm t}^m,\quad \Lop_{\widehat\kappa^m}^m \widehat v_{\mathrm t}^m
   =\Dop_{\widehat\kappa^m}^m \widehat v_{\mathrm n}^m.
\]
The curvature of a regular closed curve is not identically zero.
Lemma~\ref{time:operator-bounds} below therefore gives invertibility
of $\Lop_{\widehat\kappa^m}^m$ and $I+\tau\Sop_{\widehat\kappa^m}^m$.
Elimination gives \eqref{time:normal-resolvent}. Conversely, pulling
these components back by $\chi^m$ gives a material velocity $v^m$
for which $x^{m+1}=x^m+\tau v^m$ satisfies
\eqref{time:strong-bgn}, hence \eqref{time:bgn}.
The velocity representation also gives uniqueness.
To verify the claimed spatial regularity, write
\eqref{time:strong-bgn} as
\[
 \partial_\rho^2x^{m+1}
 =\frac{\partial_\rho|\partial_\rho x^m|}{|\partial_\rho x^m|}
       \partial_\rho x^{m+1}
  +\frac{|\partial_\rho x^m|^2}{\tau}
       \bigl((x^{m+1}-x^m)\cdot\mathbf n^m\bigr)\mathbf n^m.
\]
Since $x^m$ is $C^\infty$ and regular, all coefficients are smooth.
Starting from $x^{m+1}\in H^1$, the right-hand side belongs to $L^2$,
so periodic elliptic regularity gives $x^{m+1}\in H^2$.
More generally, for every integer $r\ge1$,
\[
 x^{m+1}\in H^r
 \quad\Longrightarrow\quad
 \partial_\rho^2x^{m+1}\in H^{r-1}
 \quad\Longrightarrow\quad
 x^{m+1}\in H^{r+1}.
\]
Iteration and Sobolev embedding give
$x^{m+1}\in C^\infty(\Torus;\R^2)$.
This argument holds for each fixed $\tau>0$, with regularity
constants that may depend on $\tau$ and $x^m$.
\end{proof}

The following identities express the curvature and length after one step.
Differentiating the velocity decomposition gives
\begin{equation}\label{time:ABCq}
 \partial_s v^m
 =(\partial_s v_{\mathrm t}^m+\kappa^m v_{\mathrm n}^m)
        \mathbf t^m+(\partial_s v_{\mathrm n}^m-\kappa^m v_{\mathrm t}^m)
        \mathbf n^m.
\end{equation}
The position increment and \eqref{time:strong-bgn} imply
\begin{equation}\label{time:strong-identities}
\begin{aligned}
 \partial_s x^{m+1}&=\mathbf t^m+\tau\partial_s v^m,\\
 \partial_s(\mathbf t^m\cdot\partial_s v^m)
   &=-\kappa^m(\mathbf n^m\cdot\partial_s v^m),\\
 \tau\partial_s(\mathbf n^m\cdot\partial_s v^m)
   &=v_{\mathrm n}^m+\kappa^m
        (1+\tau\mathbf t^m\cdot\partial_s v^m).
\end{aligned}
\end{equation}
If $|\mathbf t^m+\tau\partial_s v^m|>0$ everywhere, then $x^{m+1}$
is regular and its relative metric gives
\begin{equation}\label{time:arclength-change}
\begin{aligned}
 \ell^{m+1}&=\int_0^1|(x^m)'|
              |\mathbf t^m+\tau\partial_s v^m|\,d\rho,\\
 \chi^{m+1}(\rho)&=\frac{1}{\ell^{m+1}}
    \int_0^\rho|(x^m)'(\zeta)|
              |\mathbf t^m+\tau\partial_s v^m|(\zeta)\,d\zeta.
\end{aligned}
\end{equation}
The updated normal is
\[
 \mathbf n^{m+1}
 =\frac{J[\mathbf t^m+\tau\partial_s v^m]}
        {|\mathbf t^m+\tau\partial_s v^m|}.
\]
Taking its component in \eqref{time:strong-bgn} and dividing by the
square of the relative metric yields
\begin{equation}\label{time:exact-curvature}
 \kappa^{m+1}
 =-v_{\mathrm n}^m
       \frac{1+\tau\mathbf t^m\cdot\partial_s v^m}
            {|\mathbf t^m+\tau\partial_s v^m|^3}.
\end{equation}
If $1+\tau\mathbf t^m\cdot\partial_s v^m>0$, the identity
\begin{equation}\label{time:q-expansion-exact}
 |\mathbf t^m+\tau\partial_s v^m|
 =1+\tau\mathbf t^m\cdot\partial_s v^m+\frac{\tau^2(\mathbf n^m\cdot\partial_s v^m)^2}
 {|\mathbf t^m+\tau\partial_s v^m|
       +1+\tau\mathbf t^m\cdot\partial_s v^m}
\end{equation}
and periodicity give
\begin{equation}\label{time:exact-length}
 \ell^{m+1}=\ell^m
 +\tau\int_0^1\kappa^m v_{\mathrm n}^m|(x^m)'|\,d\rho+\tau^2\int_0^1
 \frac{(\mathbf n^m\cdot\partial_s v^m)^2|(x^m)'|}
      {|\mathbf t^m+\tau\partial_s v^m|
          +1+\tau\mathbf t^m\cdot\partial_s v^m}\,d\rho.
\end{equation}

\subsection{Uniform bounds for the velocity operators}\label{time:operators-section}

The velocity representation requires invertible operators for each
fixed curve. The error estimates also require uniform bounds when
the coefficient profile and length vary. Throughout this subsection,
functions use $\xi\in\Torus$, norms and pairings use $d\xi$, and
$H^{-1}$ is the dual of $H^1$.

For a real coefficient profile
$f\in H^\sigma(\Torus;\R)\setminus\{0\}$ and a length $\ell>0$,
let $\Lop_{f,\ell},\Dop_{f,\ell},\Kop_{f,\ell},\Sop_{f,\ell}$
be the operators in \eqref{time:LD}--\eqref{time:KS} with $\ell^m$
replaced by $\ell$. Thus $\Lop_f^m=\Lop_{f,\ell^m}$, and similarly
for the other operators. The BGN velocity formula uses
$(f,\ell)=(\widehat\kappa^m,\ell^m)$, while comparison with the exact
flow uses $(\widehat\kappa(t),|\Gamma(t)|)$.
We also apply these operators to intermediate scalar states, so no
curve closure condition is imposed on $f$.

\paragraph{The set for uniform estimates.}
Fix positive constants $\ell_{\min},\ell_{\max},c_0$ such that
\[
\begin{gathered}
 0<\ell_{\min}<\min_{0\le t\le T}|\Gamma(t)|,\quad
 \ell_{\max}>\max_{0\le t\le T}|\Gamma(t)|,\quad 0<c_0<\min_{0\le t\le T}\|\widehat\kappa(t)\|_{L^2}.
\end{gathered}
\]
The choice of $c_0$ is possible because
$\int_{\Gamma(t)}\widehat\kappa\,ds=2\pi$ and the exact lengths are bounded.
For $R>0$, define
\begin{equation}\label{time:local-set}
 \mathcal U(R)=\bigl\{(f,\ell)\in
 H^\sigma(\Torus;\R)\times[\ell_{\min},\ell_{\max}]:
 \|f\|_{H^\sigma}\le R,\ \|f\|_{L^2}\ge c_0\bigr\}.
\end{equation}
On this set, $C_R$ denotes a constant depending only on
$R,\sigma,\ell_{\min},\ell_{\max},c_0$, independent of the particular
pair $(f,\ell)$. The length bounds control the principal coefficient,
and the lower bound on $\|f\|_{L^2}$ gives uniform coercivity of
$\Lop_{f,\ell}$, as shown below.
Fix $R>\max_{0\le t\le T}\|\widehat\kappa(t)\|_{H^\sigma}$ for the
continuation argument. The exact curvature--length pair satisfies
$(\widehat\kappa(t),|\Gamma(t)|)\in\operatorname{int}\mathcal U(R)$
for $0\le t\le T$. For $R_{\mathrm{ext}}>R$,
$\mathcal U(R)\subset\mathcal U(R_{\mathrm{ext}})$, with the same
constants $\ell_{\min},\ell_{\max},c_0$.

The following lemma supplies solvability for Lemma~\ref{time:exact-step},
spectral estimates for Lemma~\ref{time:local-estimates}, and coefficient
comparison for Lemma~\ref{time:intermediate-equation}.
\begin{lemma}\label{time:operator-bounds}
For each $f\in H^\sigma(\Torus;\R)\setminus\{0\}$ and $\ell>0$,
$\Lop_{f,\ell}:H^1\to H^{-1}$ is coercive and
\begin{equation}\label{time:D-bounds}
 \Dop_{f,\ell}:L^2\to H^{-1},
 \qquad
 \Dop_{f,\ell}:H^1\to L^2
\end{equation}
are bounded. On $L^2$, $\Kop_{f,\ell}$ is bounded, self-adjoint and
nonpositive, and $\Sop_{f,\ell}$ is self-adjoint and nonnegative. Also,
\begin{equation}\label{time:S-domain}
 \operatorname{Dom}(\Sop_{f,\ell})=H^2,\qquad
 \|g\|_{H^2}\asymp
 \|g\|_{L^2}+\|\Sop_{f,\ell}g\|_{L^2}
\end{equation}
with constants depending on $f$ and $\ell$.
In particular, $I+\tau\Sop_{f,\ell}$ is invertible for every $\tau>0$.

The coercivity, operator bounds and norm equivalence above are uniform
for $(f,\ell)\in\mathcal U(R)$. For any two pairs
$(f,\ell),(\widetilde f,\widetilde\ell)\in\mathcal U(R)$,
\begin{equation}\label{time:K-lipschitz}
 \|\Kop_{f,\ell}-\Kop_{\widetilde f,\widetilde\ell}\|_{\mathcal L(L^2,L^2)}
 \le C_R\bigl(\|f-\widetilde f\|_{H^1}
                    +|\ell-\widetilde\ell|\bigr).
\end{equation}
\end{lemma}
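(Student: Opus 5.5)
The approach is to treat $\Lop_{f,\ell}$ as a Schrödinger-type operator with nonnegative potential. We then write $\Kop_{f,\ell}$ as $-B^*B$ using the skew structure of $\Dop_{f,\ell}$. The uniform statements follow by tracking constants through Sobolev embedding $H^\sigma\hookrightarrow C^1$ when $\sigma>3/2$.

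\textbf{Coercivity and bounds for $\Lop$ and $\Dop$.} The form of $\Lop_{f,\ell}$ is
\[
\langle \Lop g,g\rangle=\ell^{-2}\|\partial_\xi g\|_2^2+\|fg\|_2^2 .
\]
For coercivity on $H^1$, argue by contradiction. A normalized sequence with vanishing form converges in $L^2$ to a constant $c$ with $\|fc\|_2=0$, which forces $c=0$ since $f\ne0$; this contradicts the normalization. For the uniform version on $\mathcal U(R)$, use the direct bound
\[
\|fg\|_2\ge |\bar g|\,\|f\|_2-\|f\|_\infty\|g-\bar g\|_2,
\]
together with Poincaré's inequality, $\|f\|_2\ge c_0$, $\|f\|_\infty\le C_R$ and $\ell\le\ell_{\max}$. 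Splitting into the cases where $|\bar g|$ is small or large relative to $\|\partial_\xi g\|_2$ gives a constant $C_R$. Boundedness $\Dop:H^1\to L^2$ is immediate from $f,\partial_\xi f\in L^\infty$, which holds since $\sigma>3/2$.

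For the second mapping property, note that the formal adjoint of $\Dop_{f,\ell}$ in $L^2(d\xi)$ is
\[
\Dop^*=\ell^{-1}(\partial_\xi f-2\partial_\xi f-2f\partial_\xi)=-\Dop .
\]
So $\Dop$ is skew-adjoint, and duality turns $H^1\to L^2$ into $L^2\to H^{-1}$.

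\textbf{Properties of $\Kop$ and $\Sop$.} By Lax–Milgram, $\Lop^{-1}:H^{-1}\to H^1$ is bounded, and it is self-adjoint and positive. Hence $\Kop=\Dop\Lop^{-1}\Dop:L^2\to H^{-1}\to H^1\to L^2$ is bounded. Skewness gives
\[
\langle \Kop g,g\rangle=-\langle \Lop^{-1}\Dop g,\Dop g\rangle\le0 ,
\]
and symmetry follows from $(\Dop\Lop^{-1}\Dop)^*=(-\Dop)\Lop^{-1}(-\Dop)$. Because $\Kop$ is bounded, $\Sop=\Lop+\Kop$ is a bounded self-adjoint perturbation of the self-adjoint operator $\Lop$ with domain $H^2$. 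The domain of $\Lop$ is $H^2$ because $f^2\in L^\infty$, with $\|g\|_{H^2}\asymp\|g\|_2+\|\Lop g\|_2$.

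Nonnegativity of $\Sop$ is the one genuinely structural point. My first attempt is to recognise that $\langle\Sop g,g\rangle$ is the minimum of a nonnegative energy. For a normal component $g$, the quantity
\[
\min_\beta\ \ell^{-1}\int\bigl|\partial_s(g\mathbf n+\beta\mathbf t)\bigr|^2\,ds
\]
is attained at $\Lop\beta=\Dop g$, by the computation in Lemma~\ref{time:exact-step}. Substituting the minimizer shows the minimum equals $\langle\Lop g,g\rangle-\langle\Lop^{-1}\Dop g,\Dop g\rangle$ up to signs. It is cleaner to verify this abstractly. Expanding $\partial_s(g\mathbf n+\beta\mathbf t)$ via \eqref{time:ABCq} gives the quadratic form
\[
Q(g,\beta)=\langle\Lop g,g\rangle+2\langle\Dop\beta,g\rangle+\langle\Lop\beta,\beta\rangle\ge0 .
\]
Minimizing over $\beta$ gives $\beta=-\Lop^{-1}\Dop^*g=\Lop^{-1}\Dop g$, and the minimum is $\langle\Lop g,g\rangle-\langle\Lop^{-1}\Dop g,\Dop g\rangle=\langle\Sop g,g\rangle$. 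This uses only the algebraic form, so no closure of $f$ is needed. I would check that the cross term matches exactly with $\Dop$ as defined, and this sign bookkeeping is where I expect the main care to be needed. Given nonnegativity and self-adjointness, the spectral theorem yields invertibility of $I+\tau\Sop$ with $\|(I+\tau\Sop)^{-1}\|\le1$.

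\textbf{Uniformity and the Lipschitz bound.} All the constants above depend only on $\|f\|_{W^{1,\infty}}\le C_R$, on $c_0$, and on $\ell_{\min}$ and $\ell_{\max}$, so uniformity on $\mathcal U(R)$ is automatic. For \eqref{time:K-lipschitz}, telescope the difference as
\[
\Kop-\widetilde\Kop=(\Dop-\widetilde\Dop)\Lop^{-1}\Dop+\widetilde\Dop(\Lop^{-1}-\widetilde\Lop^{-1})\Dop+\widetilde\Dop\widetilde\Lop^{-1}(\Dop-\widetilde\Dop),
\]
with $\Lop^{-1}-\widetilde\Lop^{-1}=\Lop^{-1}(\widetilde\Lop-\Lop)\widetilde\Lop^{-1}$. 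The difference $\Dop-\widetilde\Dop$ involves multiplication by $f-\widetilde f$ and $\partial_\xi(f-\widetilde f)$, whose $L^2$ size is controlled by $\|f-\widetilde f\|_{H^1}$.

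The delicate step is the first term, where $\partial_\xi(f-\widetilde f)$ is only in $L^2$ and is applied to $\Lop^{-1}\Dop u\in H^1\subset L^\infty$; this product lies in $L^2$, which is fine. In the last term, $\widetilde\Dop\widetilde\Lop^{-1}$ must act on $(\Dop-\widetilde\Dop)u$, which lies only in $H^{-1}$. The mapping $H^{-1}\to H^1\to L^2$ handles this, provided we bound $\|(\Dop-\widetilde\Dop)u\|_{H^{-1}}$ by duality using skewness and the $H^1\to L^2$ bound with $L^\infty$ and $L^2$ coefficients. Finally, $\widetilde\Lop-\Lop=(\widetilde\ell^{-2}-\ell^{-2})\partial_\xi^2+(\widetilde f^2-f^2)$ maps $H^1\to H^{-1}$ with norm at most $C_R(|\ell-\widetilde\ell|+\|f-\widetilde f\|_{H^1})$.
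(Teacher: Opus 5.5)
Your proposal is correct and follows the paper's proof essentially step by step. It uses the same Poincar\'e-based coercivity bound, skew-adjointness $\Dop_{f,\ell}^*=-\Dop_{f,\ell}$ for both the $L^2\to H^{-1}$ bound and the sign of $\Kop_{f,\ell}$, the bounded-perturbation argument for $\operatorname{Dom}(\Sop_{f,\ell})=H^2$, and the resolvent-identity telescoping for \eqref{time:K-lipschitz}. Your nonnegativity argument via $\min_\beta Q(g,\beta)$ is the paper's sum-of-squares identity \eqref{time:S-form} in a different guise: the cross term does match, $2\langle\Dop_{f,\ell}\beta,g\rangle=2\ell^{-1}\int f(g\partial_\xi\beta-\beta\partial_\xi g)\,d\xi$, and \eqref{time:S-form} is exactly $Q(g,\Lop_{f,\ell}^{-1}\Dop_{f,\ell}g)$.
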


\begin{proof}
For $b\in H^1$, integration by parts gives
\[
 \langle\Lop_{f,\ell}b,b\rangle
 =\ell^{-2}\|\partial_\xi b\|_{L^2}^2+\|f b\|_{L^2}^2.
\]
Writing $\bar b=\int_0^1b\,d\xi$, periodic Poincar\'e gives
$\|b-\bar b\|_{L^2}\le C\|\partial_\xi b\|_{L^2}$, while
\[
 |\bar b|\,\|f\|_{L^2}
 \le \|f b\|_{L^2}
     +\|f\|_{L^\infty}\|b-\bar b\|_{L^2}.
\]
Since $\|f\|_{L^2}>0$ and $\ell>0$, these inequalities control
$\|b\|_{H^1}^2$ by $\langle\Lop_{f,\ell}b,b\rangle$.
The coercivity constant is uniform on $\mathcal U(R)$ because
$\ell$, $\|f\|_{L^\infty}$ and $\|f\|_{L^2}^{-1}$ are uniformly bounded.

For $g\in L^2$ and $w\in H^1$, integration by parts yields
\[
 |\langle\Dop_{f,\ell}g,w\rangle|
 =
 \ell^{-1}\left|
 \int_0^1 g\bigl(-2f\partial_\xi w-(\partial_\xi f)w\bigr)\,d\xi
 \right|
 \le C_{f,\ell}\|g\|_{L^2}\|w\|_{H^1}.
\]
The embedding $H^1\hookrightarrow L^\infty$ also gives the second
bound in \eqref{time:D-bounds}. Thus
\[
 \Kop_{f,\ell}:
 L^2\xrightarrow{\Dop_{f,\ell}}H^{-1}
 \xrightarrow{\Lop_{f,\ell}^{-1}}H^1
 \xrightarrow{\Dop_{f,\ell}}L^2
\]
is bounded. Since $\Dop_{f,\ell}^*=-\Dop_{f,\ell}$,
$\Kop_{f,\ell}=\Dop_{f,\ell}\Lop_{f,\ell}^{-1}\Dop_{f,\ell}$
is self-adjoint and nonpositive.

For smooth $g$, a further integration by parts gives
\begin{equation}\label{time:S-form}
\begin{aligned}
 (\Sop_{f,\ell}g,g)_{L^2}
 &=\int_0^1
   \bigl(\ell^{-1}\partial_\xi(\Lop_{f,\ell}^{-1}\Dop_{f,\ell}g)+fg\bigr)^2\,d\xi+\int_0^1
   \bigl(\ell^{-1}\partial_\xi g-f\Lop_{f,\ell}^{-1}\Dop_{f,\ell}g\bigr)^2\,d\xi
 \ge0.
\end{aligned}
\end{equation}
Since $\Sop_{f,\ell}=\Lop_{f,\ell}+\Kop_{f,\ell}$ is a bounded
self-adjoint perturbation of $\Lop_{f,\ell}$ on $L^2$, it is
self-adjoint with domain $H^2$. Together with \eqref{time:S-form},
this proves that $I+\tau\Sop_{f,\ell}$ is invertible for $\tau>0$.
The equation
\[
 -\ell^{-2}\partial_\xi^2 g
 =\Sop_{f,\ell} g-f^2g-\Kop_{f,\ell} g
\]
and the boundedness of $\Kop_{f,\ell}$ prove \eqref{time:S-domain}.
On $\mathcal U(R)$, the bounds for $\ell$, $\ell^{-1}$,
$\|f\|_{H^\sigma}$, and $\|f\|_{L^2}^{-1}$ make all these
constants uniform.

For two pairs in $\mathcal U(R)$, expand the difference of the three
factors in $\Kop_{f,\ell}=\Dop_{f,\ell}\Lop_{f,\ell}^{-1}\Dop_{f,\ell}$.
For the inverse factors use
\[
 \Lop_{f,\ell}^{-1}
 -\Lop_{\widetilde f,\widetilde\ell}^{-1}
 =
 \Lop_{f,\ell}^{-1}
 (\Lop_{\widetilde f,\widetilde\ell}-\Lop_{f,\ell})
 \Lop_{\widetilde f,\widetilde\ell}^{-1}
\]
between $H^{-1}$ and $H^1$.
The difference $\Dop_{f,\ell}-\Dop_{\widetilde f,\widetilde\ell}$
is bounded in the two norms in
\eqref{time:D-bounds} by
$C_R(\|f-\widetilde f\|_{H^1}+|\ell-\widetilde\ell|)$.
The difference $\Lop_{f,\ell}-\Lop_{\widetilde f,\widetilde\ell}$
has the same bound as an operator from $H^1$ to $H^{-1}$, by its
formula in \eqref{time:LD}.
Combining these estimates proves \eqref{time:K-lipschitz}.
\end{proof}

\subsection{Consistency estimates for the normal velocity and curvature}\label{time:local-estimates-section}

For the time-semidiscrete solution, the following lemma estimates
$\widehat v_{\mathrm n}^m+\widehat\kappa^m$ and
$\widehat v_{\mathrm n}^m+\widehat\kappa^{m+1}$ in $H^r(\Torus)$.
These bounds are used to estimate the defects and prove continuation.

\begin{lemma}\label{time:local-estimates}
For $(\widehat\kappa^m,\ell^m)\in\mathcal U(R)$ and sufficiently small $\tau$,
\begin{gather}
 \|\widehat\kappa^m+\widehat v_{\mathrm n}^m\|_{H^r}
   \le C_R\tau^{(\sigma-r)/2},\qquad 0\le r\le\sigma,
 \label{time:resolvent-estimates}\\
 \|\widehat\kappa^{m+1}+\widehat v_{\mathrm n}^m\|_{H^r}
 \le C_R\tau^{(\sigma+1-r)/2},\qquad 1\le r\le\sigma.
\label{time:curvature-increment}
\end{gather}
\end{lemma}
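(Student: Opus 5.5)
The first estimate \eqref{time:resolvent-estimates} comes from the resolvent formula in Lemma~\ref{time:exact-step}. Write $\Sop=\Sop^m_{\widehat\kappa^m}$. Then
\[
 \widehat\kappa^m+\widehat v_{\mathrm n}^m
 =\bigl(I-(I+\tau\Sop)^{-1}\bigr)\widehat\kappa^m
 =\tau\Sop(I+\tau\Sop)^{-1}\widehat\kappa^m .
\]
By Lemma~\ref{time:operator-bounds}, $\Sop$ is self-adjoint and nonnegative on $L^2$. Moreover, $\|g\|_{H^2}\asymp\|g\|_{L^2}+\|\Sop g\|_{L^2}$ holds uniformly on $\mathcal U(R)$. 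Interpolation therefore identifies $H^{2\theta}$ with $\operatorname{Dom}((I+\Sop)^{\theta})$ for $0\le\theta\le1$, with uniform constants. Spectral calculus gives
\[
 \|(I+\Sop)^{r/2}\,\tau\Sop(I+\tau\Sop)^{-1}(I+\Sop)^{-\sigma/2}\|_{L^2\to L^2}
 \le C\sup_{\lambda\ge0}\frac{\tau\lambda(1+\lambda)^{(r-\sigma)/2}}{1+\tau\lambda}
 \le C\tau^{(\sigma-r)/2}.
\]
The last bound uses $0\le(\sigma-r)/2\le1$: the supremum is attained near $\lambda\sim\tau^{-1}$. This gives \eqref{time:resolvent-estimates} with a constant depending only on $R$, since $\|\widehat\kappa^m\|_{H^\sigma}\le R$. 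One subtlety is that $H^{\sigma}$ with $\sigma<2$ lies inside the interpolation range, so uniformity follows from uniform equivalence at the endpoints $L^2$ and $H^2$. By the same argument, $\|\widehat v_{\mathrm n}^m\|_{H^r}\le C_R\tau^{-(r-\sigma)_+/2}$, and $\widehat v_{\mathrm t}^m=\Lop^{-1}\Dop\widehat v_{\mathrm n}^m$ gains one derivative. Hence $\tau\|\partial_s v^m\|_{L^\infty}\lesssim \tau\|\widehat v\|_{H^{1+\epsilon}}\lesssim\tau^{1-(1+\epsilon-\sigma)_+/2}\to0$, because $\sigma>3/2$. For small $\tau$ this guarantees $1+\tau\mathbf t^m\cdot\partial_s v^m>1/2$ and $|\mathbf t^m+\tau\partial_s v^m|\ge1/2$, so the exact identities \eqref{time:exact-curvature}--\eqref{time:q-expansion-exact} are available.

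For \eqref{time:curvature-increment}, I start from \eqref{time:exact-curvature} and write
\[
 \kappa^{m+1}+v_{\mathrm n}^m=v_{\mathrm n}^m\,\frac{|\mathbf t^m+\tau\partial_s v^m|^3-(1+\tau a)}{|\mathbf t^m+\tau\partial_s v^m|^3},
 \qquad a=\mathbf t^m\cdot\partial_s v^m=\partial_s v_{\mathrm t}^m+\kappa^m v_{\mathrm n}^m .
\]
By \eqref{time:q-expansion-exact}, the numerator equals $2\tau a$ plus terms of order $\tau^2(a^2+b^2)$, where $b=\mathbf n^m\cdot\partial_s v^m$. The key point is that $a$ is a priori only of size $\|v\|_{H^1}$, which gives merely $O(\tau)$ in low norms. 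This is not enough at $r=\sigma$, where the target is $\tau^{1/2}$. I would therefore estimate the product in $H^r$ using the algebra property of $H^r$ for $r>1/2$ and track powers: $\|\tau a\|_{H^r}\lesssim\tau\|\widehat v\|_{H^{r+1}}\lesssim \tau\cdot\tau^{-(r+1-\sigma)/2}=\tau^{(\sigma+1-r)/2}$. The factor $v_{\mathrm n}^m$ is bounded in $H^{\sigma}\subset W^{1,\infty}$. Passing between material and hatted coordinates in the products costs only smooth Jacobian factors $(\chi^m)'=|\partial_\rho x^m|/\ell^m$. These are controlled in $H^{\sigma-1}$ by $\kappa^m\in H^\sigma$, because $x^m$ is regular, and this suffices since $r\ge1$ means at most $r-1\le\sigma-1$ derivatives fall on the Jacobian.

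The main obstacle is the highest-order product estimate at $r=\sigma$. The expression $\tau a$ involves $\partial_s v_{\mathrm t}$, which requires $\widehat v_{\mathrm t}\in H^{\sigma+1}$. This follows from $\widehat v_{\mathrm t}=\Lop^{-1}\Dop\widehat v_{\mathrm n}$ only if $\Dop$ maps $H^{\sigma}$ into $H^{\sigma-1}$, which in turn requires $\partial_\xi\widehat\kappa^m\in H^{\sigma-1}$. That is exactly what is available, and multiplication $H^{\sigma-1}\times H^{\sigma}\to H^{\sigma-1}$ holds because $\sigma>1/2$. Elliptic regularity of $\Lop$ then lifts $H^{\sigma-1}$ to $H^{\sigma+1}$. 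The nonlinear remainders $\tau^2(a^2+b^2)$, together with the division by $|\mathbf t+\tau\partial_sv|^3$, are handled by Moser-type composition estimates in $H^r$, using $\|\tau\partial_sv\|_{L^\infty}\le1/2$. Each extra power of $\tau\,\partial_s v$ contributes at most $\tau^{(\sigma+1-r)/2}$ by the same counting. Combining the leading term with these remainders yields \eqref{time:curvature-increment}.
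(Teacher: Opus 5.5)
Your proof of \eqref{time:resolvent-estimates} is the paper's. The gap is in \eqref{time:curvature-increment}.

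The identity \eqref{time:exact-curvature} holds at a fixed material label $\rho$. Transporting it by $\chi^m$ therefore bounds $\widehat\kappa^{m+1}\circ\chi^{m+1}\circ(\chi^m)^{-1}+\widehat v_{\mathrm n}^m$. The lemma instead concerns $\widehat\kappa^{m+1}=\kappa^{m+1}\circ(\chi^{m+1})^{-1}$, which lives in the normalized arclength of the \emph{new} curve, and the tangential motion makes $\chi^{m+1}\neq\chi^m$. Your proposal only ever uses $\chi^m$. Your claim that $(\chi^m)'$ is controlled by $\widehat\kappa^m$ is also not right, since $(\chi^m)'$ reflects the material parametrization, not the geometry; no Jacobian is needed if you work in $\xi$ throughout. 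With $g=\chi^m\circ(\chi^{m+1})^{-1}$,
\[
 \widehat\kappa^{m+1}+\widehat v_{\mathrm n}^m
 =\bigl[\widehat\kappa^{m+1}\circ\chi^{m+1}\circ(\chi^m)^{-1}+\widehat v_{\mathrm n}^m\bigr]\circ g
 +\bigl(\widehat v_{\mathrm n}^m-\widehat v_{\mathrm n}^m\circ g\bigr).
\]
The second term is not lower order. Its natural bound is $C_R\tau\|\widehat v_{\mathrm n}^m\|_{H^{r+1}}$, which is $C_R\tau^{\sigma/2}$ for $r=1$ and $C_R\tau^{1/2}$ for $r=\sigma$, exactly the claimed rates. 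The first bracket, by contrast, is $O(\tau)$ in $H^\sigma$. Indeed, $a=(\ell^m)^{-1}\partial_\xi\widehat v_{\mathrm t}^m+\widehat\kappa^m\widehat v_{\mathrm n}^m$ is uniformly bounded in $H^\sigma$ because $\widehat v_{\mathrm t}^m$ is uniformly bounded in $H^{\sigma+1}$, and $\tau^2b^2=O_{H^\sigma}(\tau^{3/2})$. So the loss $\tau^{(\sigma+1-r)/2}$ that you attribute to $\tau a$ comes entirely from the coordinate drift your argument leaves out.

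Closing the gap requires the steps the paper carries out. First, \eqref{time:arclength-change} gives
\[
 \partial_\xi\bigl(\chi^{m+1}\circ(\chi^m)^{-1}\bigr)
 =\frac{|\widehat{\mathbf t}^m+\tau\partial_s\widehat v^m|}{\int_0^1|\widehat{\mathbf t}^m+\tau\partial_s\widehat v^m|\,d\xi},
\]
and the metric bound then yields $\|\chi^{m+1}\circ(\chi^m)^{-1}-\id\|_{H^{\sigma+1}}\le C_R\tau$. Second, you need the same bound for the inverse $g$, obtained from uniform $W^{2,\infty}$ and lower derivative bounds together with composition estimates on $H^1$ and $H^2$, interpolated to $H^\sigma$. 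Third, use the representation $\widehat v_{\mathrm n}^m\circ g-\widehat v_{\mathrm n}^m=(g-\id)\int_0^1(\partial_\xi\widehat v_{\mathrm n}^m)\circ[\id+\varepsilon(g-\id)]\,d\varepsilon$. This gives the bound $C_R\tau\|\widehat v_{\mathrm n}^m\|_{H^{r+1}}$ for $r\in\{1,\sigma\}$, after which you interpolate.

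This third step needs $\|\widehat v_{\mathrm n}^m\|_{H^{\sigma+1}}\le C_R\tau^{-1/2}$, which does not follow ``by the same argument'': the spectral identification of $H^{2\theta}$ stops at $\operatorname{Dom}(\Sop)=H^2$. The paper rewrites the resolvent equation as $(I-\tau(\ell^m)^{-2}\partial_\xi^2)\widehat v_{\mathrm n}^m=-\widehat\kappa^m-\tau[(\widehat\kappa^m)^2\widehat v_{\mathrm n}^m+\Kop^m_{\widehat\kappa^m}\widehat v_{\mathrm n}^m]$. It bounds the bracket in $H^{\sigma-1}$ via $\widehat v_{\mathrm t}^m\in H^{\sigma+1}$ and then applies the heat-resolvent Fourier multiplier.
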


\begin{proof}
By Lemma~\ref{time:operator-bounds} and interpolation,
$\|(I+\Sop_{\widehat\kappa^m}^m)^{r/2}f\|_{L^2}$ is uniformly equivalent
to $\|f\|_{H^r}$ for $0\le r\le2$.
The resolvent representation \eqref{time:normal-resolvent} gives
\[
 \widehat\kappa^m+\widehat v_{\mathrm n}^m
 =\tau\Sop_{\widehat\kappa^m}^m
       (I+\tau\Sop_{\widehat\kappa^m}^m)^{-1}\widehat\kappa^m.
\]
For $0<\tau\le1$, the scalar bounds
\[
\begin{aligned}
 \sup_{\lambda\ge0}
 \frac{\tau\lambda(1+\lambda)^{(r-\sigma)/2}}{1+\tau\lambda}
 &\le C\tau^{(\sigma-r)/2}, &&0\le r\le\sigma,\\
 \sup_{\lambda\ge0}
 \frac{(1+\lambda)^{(r-\sigma)/2}}{1+\tau\lambda}
 &\le C\tau^{-(r-\sigma)/2}, &&\sigma\le r\le2
\end{aligned}
\]
and spectral calculus prove \eqref{time:resolvent-estimates} and yield
\[
 \|\widehat v_{\mathrm n}^m\|_{H^\sigma}\le C_R,\qquad
 \|\widehat v_{\mathrm n}^m\|_{H^2}\le C_R\tau^{-(2-\sigma)/2}.
\]

Since $H^{\sigma-1}$ is an algebra, the formula for
$\Dop_{\widehat\kappa^m}^m$ gives
\[
 \|\Dop_{\widehat\kappa^m}^m\widehat v_{\mathrm n}^m\|_{H^{\sigma-1}}
 \le C_R\|\widehat v_{\mathrm n}^m\|_{H^\sigma}\le C_R.
\]
Uniform coercivity of $\Lop_{\widehat\kappa^m}^m$ and periodic elliptic
regularity, applied to the tangential equation in
\eqref{time:normal-resolvent}, therefore imply
\[
 \|\widehat v_{\mathrm t}^m\|_{H^{\sigma+1}}
 \le C_R\|\Dop_{\widehat\kappa^m}^m\widehat v_{\mathrm n}^m\|_{H^{\sigma-1}}
 \le C_R.
\]
Applying $\Dop_{\widehat\kappa^m}^m$ once more bounds
$\Kop_{\widehat\kappa^m}^m\widehat v_{\mathrm n}^m
=\Dop_{\widehat\kappa^m}^m\widehat v_{\mathrm t}^m$ in $H^{\sigma-1}$.
Write the resolvent equation as
\[
 (I-\tau(\ell^m)^{-2}\partial_\xi^2)\widehat v_{\mathrm n}^m=-\widehat\kappa^m-\tau\bigl[(\widehat\kappa^m)^2\widehat v_{\mathrm n}^m
                                +\Kop_{\widehat\kappa^m}^m \widehat v_{\mathrm n}^m\bigr].
\]
The Fourier multiplier
$[1+\tau(\ell^m)^{-2}(2\pi n)^2]^{-1}$, $n\in\Z$, gives
\[
\begin{aligned}
 \|(I-\tau(\ell^m)^{-2}\partial_\xi^2)^{-1}f\|_{H^{\sigma+1}}
 &\le C_R\tau^{-1/2}\|f\|_{H^\sigma},\\
 \|\tau(I-\tau(\ell^m)^{-2}\partial_\xi^2)^{-1}f\|_{H^{\sigma+1}}
 &\le C_R\|f\|_{H^{\sigma-1}}.
\end{aligned}
\]
Apply the first bound to $\widehat\kappa^m$ and the second to the
bracketed term, which is uniformly bounded in $H^{\sigma-1}$.
Consequently,
\begin{equation}\label{time:additional-derivative}
 \|\widehat v_{\mathrm t}^m\|_{H^{\sigma+1}}
       +\|\Kop_{\widehat\kappa^m}^m \widehat v_{\mathrm n}^m\|_{H^{\sigma-1}}\le C_R,\qquad
 \|\widehat v_{\mathrm n}^m\|_{H^{\sigma+1}}\le C_R\tau^{-1/2}.
\end{equation}

The velocity bounds also control the change of arclength.
Using \eqref{time:ABCq} and the velocity bounds gives
\[
\begin{aligned}
 \|\widehat{\mathbf t}^m\cdot\partial_s\widehat v^m\|_{H^\sigma}
 +\|\widehat{\mathbf n}^m\cdot\partial_s\widehat v^m\|_{H^{\sigma-1}}
 &\le C_R,\\
 \|\widehat{\mathbf n}^m\cdot\partial_s\widehat v^m\|_{H^\sigma}
 &\le C_R\tau^{-1/2}.
\end{aligned}
\]
Both are uniformly bounded pointwise because $\sigma-1>1/2$.
The product estimate gives
\[
 \|\tau^2(\widehat{\mathbf n}^m\cdot\partial_s\widehat v^m)^2\|_{H^\sigma}\le C\tau^2\|\widehat{\mathbf n}^m\cdot\partial_s\widehat v^m\|_{L^\infty}
              \|\widehat{\mathbf n}^m\cdot\partial_s\widehat v^m\|_{H^\sigma}
 \le C_R\tau^{3/2}.
\]
The pointwise bound also gives
$1+\tau\widehat{\mathbf t}^m\cdot\partial_s\widehat v^m
\ge1-C_R\tau\ge1/2$ for small $\tau$.
Thus $x^{m+1}$ is regular. The identity
\[
 |\widehat{\mathbf t}^m+\tau\partial_s\widehat v^m|^2
 =(1+\tau\widehat{\mathbf t}^m\cdot\partial_s\widehat v^m)^2
   +\tau^2(\widehat{\mathbf n}^m\cdot\partial_s\widehat v^m)^2,
\]
implies
\[
 \bigl\||\widehat{\mathbf t}^m+\tau\partial_s\widehat v^m|^2-1\bigr\|_{H^\sigma}
 \le C_R\tau.
\]
Smooth composition with square-root and reciprocal functions gives
the first two bounds in \eqref{time:metric-estimates} below.
For the coordinate change, \eqref{time:arclength-change} gives
\[
 \partial_\xi\bigl(\chi^{m+1}\circ(\chi^m)^{-1}\bigr)
 =\frac{|\widehat{\mathbf t}^m+\tau\partial_s\widehat v^m|}
 {\displaystyle\int_0^1|\widehat{\mathbf t}^m+\tau\partial_s\widehat v^m|\,d\xi}.
\]
Its derivative differs from $1$ by $O_{H^\sigma}(\tau)$.
Since this map fixes $0$, Poincar\'e's inequality gives its difference
from $\id$ in $H^{\sigma+1}$. Together, these estimates yield
\begin{equation}\label{time:metric-estimates}
 \bigl\||\widehat{\mathbf t}^m+\tau\partial_s\widehat v^m|-1\bigr\|_{H^\sigma}
 +\left\|\frac{1+\tau\widehat{\mathbf t}^m\cdot\partial_s\widehat v^m}
 {|\widehat{\mathbf t}^m+\tau\partial_s\widehat v^m|^3}-1\right\|_{H^\sigma}+\|\chi^{m+1}\circ(\chi^m)^{-1}-\id\|_{H^{\sigma+1}}
 \le C_R\tau.
\end{equation}

To compare $\widehat\kappa^{m+1}$ with $-\widehat v_{\mathrm n}^m$ at
the same $\xi$, we need the inverse coordinate estimate
\[
 \|\chi^m\circ(\chi^{m+1})^{-1}-\id\|_{H^{\sigma+1}}
 \le C_R\tau.
\]
Indeed, \eqref{time:metric-estimates} and
$H^{\sigma+1}\hookrightarrow W^{2,\infty}$ give uniform
$W^{2,\infty}$ bounds and positive lower derivative bounds for the
coordinate change and its inverse. The chain rule and change of
variables give uniform composition bounds on $H^1$ and $H^2$, hence
on $H^\sigma$ by interpolation. Applying these bounds to the inverse
derivative identity, and using that the inverse fixes $0$, proves
the displayed estimate. We use the lifts fixing $0$, whose differences
from $\id$ are periodic.

The fundamental theorem of calculus gives
\[
\begin{aligned}
 &\widehat v_{\mathrm n}^m\circ\chi^m\circ(\chi^{m+1})^{-1}
       -\widehat v_{\mathrm n}^m\\
 &\quad=\bigl(\chi^m\circ(\chi^{m+1})^{-1}-\id\bigr)
 \int_0^1(\partial_\xi\widehat v_{\mathrm n}^m)
 \circ\bigl[\id+\varepsilon
       \bigl(\chi^m\circ(\chi^{m+1})^{-1}-\id\bigr)\bigr]\,d\varepsilon.
\end{aligned}
\]
For $0\le\varepsilon\le1$, the maps in this integral have the same
uniform derivative bounds. Multiplication in $H^r$ and the composition
bounds therefore give, for $r\in\{1,\sigma\}$,
\[
 \|\widehat v_{\mathrm n}^m\circ\chi^m\circ(\chi^{m+1})^{-1}-\widehat v_{\mathrm n}^m\|_{H^r}
 \le C_R\tau\|\widehat v_{\mathrm n}^m\|_{H^{r+1}}.
\]
On the other hand, \eqref{time:exact-curvature} gives the exact identity
\[
 \widehat\kappa^{m+1}\circ\chi^{m+1}\circ(\chi^m)^{-1}
       +\widehat v_{\mathrm n}^m=-\widehat v_{\mathrm n}^m
 \left(\frac{1+\tau\widehat{\mathbf t}^m\cdot\partial_s\widehat v^m}
 {|\widehat{\mathbf t}^m+\tau\partial_s\widehat v^m|^3}-1\right).
\]
By \eqref{time:metric-estimates}, its right-hand side has
$H^\sigma$ norm at most $C_R\tau$. Compose this identity with
$\chi^m\circ(\chi^{m+1})^{-1}$ and use the preceding velocity
comparison to obtain
\[
 \|\widehat\kappa^{m+1}+\widehat v_{\mathrm n}^m\|_{H^r}
 \le C_R\tau\bigl(1+\|\widehat v_{\mathrm n}^m\|_{H^{r+1}}\bigr),
 \qquad r\in\{1,\sigma\}.
\]
The $H^2$ velocity bound and \eqref{time:additional-derivative} now give
\[
 \|\widehat\kappa^{m+1}+\widehat v_{\mathrm n}^m\|_{H^1}
 \le C_R\tau^{\sigma/2},\qquad
 \|\widehat\kappa^{m+1}+\widehat v_{\mathrm n}^m\|_{H^\sigma}
 \le C_R\tau^{1/2}.
\]
Interpolating between these two estimates gives the exponent
$(\sigma+1-r)/2$ in \eqref{time:curvature-increment} for
$1\le r\le\sigma$.
\end{proof}

\subsection{Estimates for the defect terms}\label{time:intermediate-section}

We first write the exact curvature and length equations, then define
and estimate the defects of their
time-semidiscrete counterparts for use in
Section~\ref{time:stability-section}.

\paragraph{Exact evolution equations.}
For the exact parametrization fixed by \eqref{eq:exact-marked-point},
the curvature evolution identity \cite[Lemma~39(i)]{BGN2020},
written in our curvature sign convention, gives
\begin{equation}\label{time:reference-equations}
 \partial_t\widehat\kappa
 =|\Gamma(t)|^{-2}\partial_\xi^2\widehat\kappa
     +\mathcal N(\widehat\kappa(t),|\Gamma(t)|),
\end{equation}
where
\[
 \mathcal N(\widehat\kappa(t),|\Gamma(t)|)
 =\underbrace{\widehat\kappa^3}_{\text{curvature reaction}}
 +\underbrace{\frac{\partial_t\widehat X\cdot\widehat{\mathbf t}}
 {|\Gamma(t)|}\,\partial_\xi\widehat\kappa}_{\text{tangential transport}}.
\]
The length satisfies
\begin{equation}\label{time:g0}
 \frac{d}{dt}|\Gamma(t)|
   =-|\Gamma(t)|\int_0^1\widehat\kappa(\xi,t)^2\,d\xi.
\end{equation}
To express the nonlinear term through curvature and length, use
\eqref{eq:exact-tangential-velocity} and
$\Dop_{f,\ell}f=3\ell^{-1}f\partial_\xi f$ to obtain
\begin{equation}\label{time:alphaM}
 \widehat\alpha(\xi,t)
 =-\bigl(\Lop_{\widehat\kappa(t),|\Gamma(t)|}^{-1}
         \Dop_{\widehat\kappa(t),|\Gamma(t)|}
         \widehat\kappa(t)\bigr)(\xi).
\end{equation}
The tangential velocity of the normalized-arclength parametrization is
\begin{equation}\label{time:alphaH}
 \partial_t\widehat X(\xi,t)\cdot\widehat{\mathbf t}(\xi,t)
 =\widehat\alpha(0,t)+|\Gamma(t)|\int_0^\xi
       \biggl(\widehat\kappa(\zeta,t)^2-
              \int_0^1\widehat\kappa(\eta,t)^2\,d\eta\biggr)d\zeta.
\end{equation}
To compare the time-semidiscrete pair
$(\widehat v_{\mathrm n}^m,\ell^m)$ with the exact pair
$(-\widehat\kappa(t_m),|\Gamma(t_m)|)$, we use the following nonlinear
map for nonzero $f\in H^\sigma(\Torus;\R)$ and $\ell>0$,
with estimates on each fixed set $\mathcal U(R)$:
\begin{equation}\label{time:N}
\begin{aligned}
 \mathcal N(f,\ell)(\xi)
 ={}&\underbrace{f(\xi)^3}_{\text{curvature reaction}}\\
 &+\underbrace{\frac{
  -(\Lop_{f,\ell}^{-1}\Dop_{f,\ell}f)(0)
  +\ell\displaystyle\int_0^\xi\biggl(f(\zeta)^2-
       \int_0^1f(\eta)^2\,d\eta\biggr)d\zeta}{\ell}
       \,\partial_\xi f(\xi)}_{\text{tangential transport}}.
\end{aligned}
\end{equation}
For $f=\widehat\kappa(t)$ and $\ell=|\Gamma(t)|$, the numerator in
the transport term equals $\partial_t\widehat X\cdot\widehat{\mathbf t}$
by \eqref{time:alphaM}--\eqref{time:alphaH}.
The operators $\Lop_{f,\ell},\Kop_{f,\ell},\Sop_{f,\ell}$ are even in $f$, whereas $\Dop_{f,\ell}$ is
odd. Consequently $\mathcal N(-f,\ell)=-\mathcal N(f,\ell)$.
Thus the same nonlinear map applies to the equation for
$-\widehat\kappa$.

\paragraph{Time-semidiscrete counterpart.}
For the final recurrence, define $\widehat v_{\mathrm n}^M$ by the first
identity in \eqref{time:normal-resolvent} with $m=M$.
For each available step $0\le m<M$, define the normal-velocity defect
$d_{\mathrm n}^m\in L^2(\Torus)$ and the length defect
$d_\ell^m\in\R$ by
\begin{equation}\label{time:scalar-diffusion}
\begin{aligned}
 \frac{\widehat v_{\mathrm n}^{m+1}-\widehat v_{\mathrm n}^m}{\tau}
       -(\ell^m)^{-2}\partial_\xi^2\widehat v_{\mathrm n}^{m+1}
 &=\mathcal N(\widehat v_{\mathrm n}^m,\ell^m)+d_{\mathrm n}^m,\\
 \frac{\ell^{m+1}-\ell^m}{\tau}
 &=-\ell^m\int_0^1(\widehat v_{\mathrm n}^m)^2\,d\xi+d_\ell^m.
\end{aligned}
\end{equation}

\begin{lemma}\label{time:intermediate-equation}
Suppose $(\widehat\kappa^j,\ell^j)\in\mathcal U(R)$ for
$0\le j\le m_*$, where $1\le m_*\le M$.
For sufficiently small $\tau$, the defects in
\eqref{time:scalar-diffusion} satisfy, for $0\le m<m_*$,
\begin{equation}\label{time:defect-bound}
 \|d_{\mathrm n}^m\|_{L^2}+|d_\ell^m|
 \le C_R\tau^{(\sigma-1)/2}.
\end{equation}
\end{lemma}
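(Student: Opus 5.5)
The plan is to treat the length defect and the normal-velocity defect separately, using only the exact one-step identities of Section~\ref{time:intrinsic-section} and the bounds of Lemma~\ref{time:local-estimates}. Both defects should be driven by the rate $\tau^{(\sigma-1)/2}$ in \eqref{time:resolvent-estimates} at $r=1$.

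\emph{Length defect.} Start from \eqref{time:exact-length} in the normalized coordinate, where $|(x^m)'|\,d\rho=\ell^m d\xi$. This gives
\[
 d_\ell^m=\ell^m\int_0^1\bigl(\widehat\kappa^m\widehat v_{\mathrm n}^m+(\widehat v_{\mathrm n}^m)^2\bigr)d\xi+\tau\ell^m\int_0^1\frac{(\widehat{\mathbf n}^m\cdot\partial_s\widehat v^m)^2}{|\cdot|+1+\tau\,\widehat{\mathbf t}^m\cdot\partial_s\widehat v^m}\,d\xi .
\]
The first integrand equals $\widehat v_{\mathrm n}^m(\widehat\kappa^m+\widehat v_{\mathrm n}^m)$, so it is bounded by $C_R\|\widehat\kappa^m+\widehat v_{\mathrm n}^m\|_{L^2}\le C_R\tau^{\sigma/2}$. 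The second term is $O(\tau)$, because $\widehat{\mathbf n}^m\cdot\partial_s\widehat v^m$ is uniformly bounded in $L^\infty$ (shown in the proof of Lemma~\ref{time:local-estimates}) and the denominator is at least $1$. Hence $|d_\ell^m|\le C_R\tau$, which is better than needed.

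\emph{Normal-velocity defect.} Write the difference quotient as
\[
 \frac{\widehat v_{\mathrm n}^{m+1}-\widehat v_{\mathrm n}^m}{\tau}=\frac{\widehat v_{\mathrm n}^{m+1}+\widehat\kappa^{m+1}}{\tau}-\frac{\widehat\kappa^{m+1}+\widehat v_{\mathrm n}^m}{\tau}.
\]
For the first piece, apply the resolvent identity at step $m+1$: $\widehat v_{\mathrm n}^{m+1}+\widehat\kappa^{m+1}=\tau\Sop^{m+1}_{\widehat\kappa^{m+1}}\widehat v_{\mathrm n}^{m+1}$ up to sign. After dividing by $\tau$, this gives $-\Sop^{m+1}\widehat v_{\mathrm n}^{m+1}=(\ell^{m+1})^{-2}\partial_\xi^2\widehat v_{\mathrm n}^{m+1}-(\widehat\kappa^{m+1})^2\widehat v_{\mathrm n}^{m+1}-\Kop^{m+1}\widehat v_{\mathrm n}^{m+1}$. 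The Laplacian term cancels against the one in \eqref{time:scalar-diffusion}, except for $((\ell^{m+1})^{-2}-(\ell^m)^{-2})\partial_\xi^2\widehat v_{\mathrm n}^{m+1}$. That remainder is bounded by $C\tau\cdot\tau^{-(2-\sigma)/2}$ in $L^2$ through the $H^2$ bound.

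Two requirements must be checked for step $m+1$, and this is where the hypothesis up to $m_*$ is used. When $m+1<m_*$, the pair $(\widehat\kappa^{m+1},\ell^{m+1})$ lies in $\mathcal U(R)$. When $m+1=m_*$, one also needs $\widehat v_{\mathrm n}^{M}$. The length change is $O(\tau)$ by the estimate above.

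For the second piece, $(\widehat\kappa^{m+1}+\widehat v_{\mathrm n}^m)/\tau$ cannot be estimated crudely. Instead, use the exact curvature identity \eqref{time:exact-curvature} together with the coordinate-change expansion from the proof of Lemma~\ref{time:local-estimates}. To first order in $\tau$, the leading terms are
\[
 \frac{\widehat\kappa^{m+1}+\widehat v_{\mathrm n}^m}{\tau}\approx -\widehat v_{\mathrm n}^m\bigl(-2\,\widehat{\mathbf t}^m\cdot\partial_s\widehat v^m\bigr)+\frac{\chi^m\circ(\chi^{m+1})^{-1}-\id}{\tau}\,\partial_\xi\widehat v_{\mathrm n}^m .
\]
Here $\widehat{\mathbf t}^m\cdot\partial_s\widehat v^m=\partial_s\widehat v_{\mathrm t}^m+\widehat\kappa^m\widehat v_{\mathrm n}^m$. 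The coordinate drift divided by $\tau$ is an integral of $\widehat{\mathbf t}\cdot\partial_s\widehat v$ minus its mean.

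Substituting $\widehat v_{\mathrm t}^m=\Lop^{-1}\Dop\widehat v_{\mathrm n}^m$ and combining with $-\Kop\widehat v_{\mathrm n}^{m+1}=-\Dop\widehat v_{\mathrm t}$ is the nonlocal cancellation announced in the introduction. The $\partial_\xi\widehat v_{\mathrm t}$ contributions cancel. What remains is the reaction term $(\widehat v_{\mathrm n})^3$ and a transport coefficient. That coefficient is $\widehat v_{\mathrm t}^m(0)$ plus $\ell\int_0^\xi\bigl((\widehat v_{\mathrm n})^2-\overline{(\widehat v_{\mathrm n})^2}\bigr)$, which matches $\mathcal N(\widehat v_{\mathrm n}^m,\ell^m)$ in \eqref{time:N}. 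This uses $\widehat v_{\mathrm t}^m(0)=-(\Lop^{-1}\Dop\widehat v_{\mathrm n}^m)(0)$, which holds by oddness.

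\emph{Collecting errors.} All remaining terms are of the following types:
\begin{itemize}
\item[(i)] replacing $\widehat\kappa^{m}$ or $\widehat\kappa^{m+1}$ by $-\widehat v_{\mathrm n}^m$ in coefficients, costing $\|\widehat\kappa+\widehat v_{\mathrm n}\|_{H^1}\le C_R\tau^{(\sigma-1)/2}$ via \eqref{time:K-lipschitz} and \eqref{time:curvature-increment};
\item[(ii)] replacing $\widehat v_{\mathrm n}^{m+1}$ by $\widehat v_{\mathrm n}^m$ in lower-order terms, costing the same via \eqref{time:resolvent-estimates}, \eqref{time:curvature-increment} and the Lipschitz bound;
\item[(iii)] second-order Taylor remainders of the metric factor and of the composition, of size $\tau\|\widehat v_{\mathrm n}^m\|_{H^2}^{\,}$-type, i.e.\ $O(\tau^{\sigma/2})$.
\end{itemize}
Each of these is $O(\tau^{(\sigma-1)/2})$ in $L^2$.

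The main obstacle is this second piece. An exact, rather than merely first-order, bookkeeping of \eqref{time:exact-curvature} and the composition is required so that no unbalanced term of size $\tau^{-1}\cdot\tau$ times a second derivative survives. I would first verify the cancellation formally to first order, and then bound each remainder using the $L^\infty$ control of $\widehat{\mathbf n}\cdot\partial_s\widehat v$ and the $H^{\sigma+1}$ bound on $\widehat v_{\mathrm t}^m$.
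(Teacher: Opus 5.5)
Your proposal is correct and follows essentially the paper's argument. The length defect comes from the exact length identity. The normal defect comes from the resolvent identity at level $m+1$, combined with a first-order expansion of \eqref{time:exact-curvature} and of the coordinate change; the nonlocal cancellation produces $\mathcal N(\widehat v_{\mathrm n}^m,\ell^m)$, and the rate is limited by $\|\widehat\kappa^m+\widehat v_{\mathrm n}^m\|_{H^1}\le C_R\tau^{(\sigma-1)/2}$.

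There are three minor slips, none of which affects the conclusion.
\begin{itemize}
\item Your own estimate gives only $|d_\ell^m|\le C_R\tau^{\sigma/2}$, not $C_R\tau$, because $\sigma<2$. This still suffices for the lemma.
\item The transport term should read $-\tau^{-1}(\chi^m\circ(\chi^{m+1})^{-1}-\id)\,\partial_\xi\widehat v_{\mathrm n}^m$. Equivalently, use the forward drift $\tau^{-1}(\chi^{m+1}\circ(\chi^m)^{-1}-\id)$, whose leading part is the primitive of $\widehat{\mathbf t}^m\cdot\partial_s\widehat v^m$ minus its mean. As written, your sign error and your description of the drift cancel each other.
\item The operators with coefficient $\widehat v_{\mathrm n}^m$ need the lower bound $\|\widehat v_{\mathrm n}^m\|_{L^2}\ge c_0/2$ to have uniform constants.
\end{itemize}
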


\begin{proof}
We first expand the change of normalized arclength to express the
curvature update at a common coordinate $\xi$.
The pointwise derivative bounds from the proof of Lemma~\ref{time:local-estimates}
and \eqref{time:q-expansion-exact} give
\begin{equation}\label{time:eta}
 (\chi^{m+1}\circ(\chi^m)^{-1})(\xi)
 =\xi+\tau\int_0^\xi\biggl[
  (\widehat{\mathbf t}^m\cdot\partial_s\widehat v^m)(\zeta)-\int_0^1(\widehat{\mathbf t}^m\cdot\partial_s\widehat v^m)(\eta)\,d\eta
 \biggr]d\zeta+O_{L^\infty}(\tau^2).
\end{equation}
The inverse map has the same first-order expansion with the opposite
sign, and
\[
 \frac{1+\tau\widehat{\mathbf t}^m\cdot\partial_s\widehat v^m}
 {|\widehat{\mathbf t}^m+\tau\partial_s\widehat v^m|^3}
 =1-2\tau\widehat{\mathbf t}^m\cdot\partial_s\widehat v^m+O_{L^\infty}(\tau^2).
\]
For $0\le\vartheta\le1$, the degree-one lifts
$(1-\vartheta)\id+\vartheta\chi^m\circ(\chi^{m+1})^{-1}$
have derivative $1+O(\tau)$ uniformly. Change of variables therefore
bounds the $L^2$ norm of $\partial_\xi^2\widehat v_{\mathrm n}^m$ composed
with these maps by $C\|\widehat v_{\mathrm n}^m\|_{H^2}$.
The $L^2$ Taylor remainder for the composition is bounded by
\[
 C\|\chi^m\circ(\chi^{m+1})^{-1}-\id\|_{L^\infty}^2\|\widehat v_{\mathrm n}^m\|_{H^2}.
\]
Replacing the inverse-coordinate increment by the negative first-order
integral in \eqref{time:eta} adds at most
$C_R\tau^2\|\widehat v_{\mathrm n}^m\|_{H^1}$.
Finally, $\widehat{\mathbf t}^m\cdot\partial_s\widehat v^m\in H^\sigma\hookrightarrow C^1$
implies that its composition with the inverse map differs from itself
by $O(\tau)$ in $L^\infty$.
Substitution into \eqref{time:exact-curvature} gives a geometric defect
$d_{\mathrm{geom}}^m\in L^2(\Torus)$ such that
\begin{equation}\label{time:curvature-expansion}
\begin{gathered}
 \begin{aligned}
 \widehat\kappa^{m+1}
 &=-\widehat v_{\mathrm n}^m+2\tau(\widehat{\mathbf t}^m\cdot\partial_s\widehat v^m)\widehat v_{\mathrm n}^m\\
 &\quad+\tau\biggl[\int_0^\xi\biggl(
    (\widehat{\mathbf t}^m\cdot\partial_s\widehat v^m)(\zeta)
    -\int_0^1(\widehat{\mathbf t}^m\cdot\partial_s\widehat v^m)(\eta)\,d\eta
    \biggr)d\zeta\biggr]\partial_\xi\widehat v_{\mathrm n}^m+\tau d_{\mathrm{geom}}^m,
 \end{aligned}\\
 \|d_{\mathrm{geom}}^m\|_{L^2}\le C_R\tau(1+\|\widehat v_{\mathrm n}^m\|_{H^2}).
\end{gathered}
\end{equation}

We next express the nonlinear terms through $\widehat v_{\mathrm n}^m$.
Equation~\eqref{time:resolvent-estimates} and its proof give
$\|\widehat v_{\mathrm n}^m\|_{H^\sigma}\le C_R$ and
$\|\widehat v_{\mathrm n}^m\|_{L^2}
\ge c_0-C_R\tau^{\sigma/2}\ge c_0/2$ for small $\tau$.
The operator estimates therefore apply with an enlarged radius and
lower bound $c_0/2$, with constants still depending only on the fixed data.
The coercivity and elliptic estimates in Lemma~\ref{time:operator-bounds}
show that the map
\[
 (f,w,\ell)\longmapsto
 \Lop_{f,\ell}^{-1}\Dop_{f,\ell}w
\]
is locally Lipschitz in the Sobolev spaces used in the proof of
Lemma~\ref{time:local-estimates}. Replacing the curvature
$\widehat\kappa^m$ by $-\widehat v_{\mathrm n}^m$ changes the tangential velocity
in $H^1$ by at most $C_R\|\widehat\kappa^m+\widehat v_{\mathrm n}^m\|_{H^1}$.
The tangential component of the velocity derivative changes by the
same bound in $L^2$, as does its mean-free primitive in $L^\infty$.
For the terms after this replacement, integration gives
\begin{equation}\label{time:G0}
\begin{aligned}
 &\int_0^\xi\biggl[
 -\ell^{-1}\partial_\zeta(\Lop_{f,\ell}^{-1}\Dop_{f,\ell}f)(\zeta)
 -f(\zeta)^2+\int_0^1f(\eta)^2\,d\eta\biggr]d\zeta\\
 &\quad=-\ell^{-1}\bigl[
 (\Lop_{f,\ell}^{-1}\Dop_{f,\ell}f)(\xi)
 -(\Lop_{f,\ell}^{-1}\Dop_{f,\ell}f)(0)\bigr]-\int_0^\xi\biggl(f(\zeta)^2-
                          \int_0^1f(\eta)^2\,d\eta\biggr)d\zeta.
\end{aligned}
\end{equation}
Substituting \eqref{time:G0} and using
$\Dop_{f,\ell}f=3\ell^{-1}f\partial_\xi f$ gives the cancellation
\begin{equation}\label{time:nonlocal-cancellation}
\begin{aligned}
 &2f^3+2\ell^{-1}f\partial_\xi(\Lop_{f,\ell}^{-1}\Dop_{f,\ell}f)
       -f^3-\Kop_{f,\ell}f\\
 &\quad+\biggl[(\Lop_{f,\ell}^{-1}\Dop_{f,\ell}f)(\xi)
       -(\Lop_{f,\ell}^{-1}\Dop_{f,\ell}f)(0)\\
 &\qquad\quad+\ell\int_0^\xi
       \biggl(f(\zeta)^2-\int_0^1f(\eta)^2\,d\eta\biggr)d\zeta
       \biggr]\ell^{-1}\partial_\xi f
   =\mathcal N(f,\ell).
\end{aligned}
\end{equation}
Apply these identities with $f=\widehat v_{\mathrm n}^m$ and $\ell=\ell^m$
to \eqref{time:curvature-expansion}. Since
$\widehat v_{\mathrm n}^m$ and $\partial_\xi\widehat v_{\mathrm n}^m$ are
uniformly bounded pointwise, \eqref{time:resolvent-estimates} gives
a curvature defect $d_\kappa^m\in L^2(\Torus)$ such that
\begin{equation}\label{time:state-remainder}
 \widehat\kappa^{m+1}=-\widehat v_{\mathrm n}^m-\tau(\widehat v_{\mathrm n}^m)^3
         -\tau\Kop_{\widehat v_{\mathrm n}^m}^m \widehat v_{\mathrm n}^m
         -\tau\mathcal N(\widehat v_{\mathrm n}^m,\ell^m)+\tau d_\kappa^m,\quad \|d_\kappa^m\|_{L^2}\le C_R\tau^{(\sigma-1)/2}.
\end{equation}
The length defect is estimated by dividing the length increment in
\eqref{time:exact-length} by $\tau$ and subtracting the leading term
in \eqref{time:scalar-diffusion}.
The bounds for $d_\kappa^m$ and $d_\ell^m$
use \eqref{time:resolvent-estimates} and the normal-velocity bounds
in the proof of Lemma~\ref{time:local-estimates}:
\[
 \|\widehat\kappa^m+\widehat v_{\mathrm n}^m\|_{H^1}
       \le C_R\tau^{(\sigma-1)/2},\qquad
 \tau\|\widehat v_{\mathrm n}^m\|_{H^2}\le C_R\tau^{\sigma/2}.
\]

Combining the normal resolvent \eqref{time:normal-resolvent} at level
$m+1$, including $m+1=M$, and
\eqref{time:state-remainder} gives an intermediate normal-velocity defect
$\widetilde d_{\mathrm n}^m\in L^2(\Torus)$ such that
\begin{equation}\label{time:schur-recurrence}
\begin{aligned}
 &\frac{\widehat v_{\mathrm n}^{m+1}-\widehat v_{\mathrm n}^m}{\tau}
       +\Sop_{\widehat v_{\mathrm n}^m}^m \widehat v_{\mathrm n}^{m+1}=(\widehat v_{\mathrm n}^m)^3+\Kop_{\widehat v_{\mathrm n}^m}^m \widehat v_{\mathrm n}^m
       +\mathcal N(\widehat v_{\mathrm n}^m,\ell^m)+\widetilde d_{\mathrm n}^m,\\
 &\|\widetilde d_{\mathrm n}^m\|_{L^2}
       \le C_R\tau^{(\sigma-1)/2}+C_R\tau\|\widehat v_{\mathrm n}^{m+1}\|_{H^2}.
\end{aligned}
\end{equation}
Indeed, \eqref{time:K-lipschitz}, the even dependence of $\Sop_{f,\ell}$
on $f$, and
\[
 \|\widehat\kappa^{m+1}+\widehat v_{\mathrm n}^m\|_{H^1}
       \le C_R\tau^{\sigma/2}
\]
control the curvature coefficient change. The length changes by
$O(\tau)$. Its contribution to the coefficient of
$\partial_\xi^2$ gives the $H^2$ term in
\eqref{time:schur-recurrence}, bounded by $C_R\tau^{\sigma/2}$
using the normal-velocity bound from the proof of
Lemma~\ref{time:local-estimates} at the next state. In the induction
proof of Theorem~\ref{time:theorem}, the new curvature state is first
shown to lie in $\mathcal U(R_{\mathrm{ext}})$, which justifies
these estimates before the error bound at that step is established.

For $(f,\ell)\in\mathcal U(R)$,
\[
 \Sop_{f,\ell}=-\ell^{-2}\partial_\xi^2+f^2+\Kop_{f,\ell},
\]
where $f^2+\Kop_{f,\ell}$ is uniformly bounded on $L^2$.
Furthermore,
\begin{equation}\label{time:z-increment}
 \|\widehat v_{\mathrm n}^{m+1}-\widehat v_{\mathrm n}^m\|_{L^2}
 \le\|\widehat\kappa^{m+1}+\widehat v_{\mathrm n}^{m+1}\|_{L^2}
+\|\widehat\kappa^{m+1}+\widehat v_{\mathrm n}^m\|_{L^2}\le C_R\tau^{\sigma/2}.
\end{equation}
The first term is controlled by \eqref{time:resolvent-estimates}.
The second is $O(\tau)$ by \eqref{time:state-remainder}.
Splitting $\Sop_{\widehat v_{\mathrm n}^m}^m$ in
\eqref{time:schur-recurrence} and comparing with
\eqref{time:scalar-diffusion} identifies the normal-velocity defect as
\[
 d_{\mathrm n}^m=\widetilde d_{\mathrm n}^m
 -\bigl((\widehat v_{\mathrm n}^m)^2+\Kop_{\widehat v_{\mathrm n}^m}^m\bigr)
       (\widehat v_{\mathrm n}^{m+1}-\widehat v_{\mathrm n}^m).
\]
The boundedness of the operator in parentheses and
\eqref{time:z-increment} give \eqref{time:defect-bound}.
\end{proof}

\subsection{Stability of the error equations}\label{time:stability-section}

We compare the time-semidiscrete pair
$(\widehat v_{\mathrm n}^m,\ell^m)$ with the exact pair
$(-\widehat\kappa(t_m),|\Gamma(t_m)|)$.
Lemma~\ref{time:prefix-error} bounds their $H^\sigma\times\R$ error
in terms of the initial error and the time-discretization defects.
Its proof uses the following discrete smoothing bound for the
normal-velocity equation.
\begin{lemma}\label{time:fourier-smoothing}
For integers $0\le j<m\le M$, suppose the lengths $\ell^r$,
$j\le r<m$, are defined and satisfy $0<\ell^r\le\ell_{\max}$.
Then, for $0\le\beta<2$,
\begin{equation}\label{time:smoothing}
 \left\|\prod_{r=j}^{m-1}
    (I-\tau(\ell^r)^{-2}\partial_\xi^2)^{-1}\right\|_
       {\mathcal L(H^{\sigma-\beta},H^\sigma)}
 \le C\left(1+((m-j)\tau)^{-\beta/2}\right).
\end{equation}
\end{lemma}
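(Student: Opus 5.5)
The operators in \eqref{time:smoothing} are all Fourier multipliers on $\Torus$, so they commute and the product acts diagonally in the basis $e^{2\pi i n\xi}$. I would therefore reduce the operator norm to a supremum over frequencies. With the periodic Fourier weights $(1+(2\pi n)^2)^{\sigma/2}$ defining $H^\sigma$, the norm from $H^{\sigma-\beta}$ to $H^\sigma$ equals
\[
 \sup_{n\in\Z}\,(1+(2\pi n)^2)^{\beta/2}\prod_{r=j}^{m-1}\bigl(1+\tau(\ell^r)^{-2}(2\pi n)^2\bigr)^{-1}.
\]
Write $\lambda=(2\pi n)^2\ge0$. The upper bound $\ell^r\le\ell_{\max}$ gives $(\ell^r)^{-2}\ge\ell_{\max}^{-2}$, so each factor is at most $(1+\tau\ell_{\max}^{-2}\lambda)^{-1}$. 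The product is then bounded by $(1+a)^{-N}$, where $N=m-j\ge1$ and $a=\tau\ell_{\max}^{-2}\lambda$. No lower bound on $\ell^r$ is needed, which is consistent with the hypothesis.

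The key step is a scalar estimate. Bernoulli's inequality would only give $(1+a)^{N}\ge 1+Na$, which yields $(1+\lambda)^{\beta/2}(1+Na)^{-1}$. For $\beta<2$ this is already enough: the function $\lambda\mapsto(1+\lambda)^{\beta/2}/(1+N\tau\ell_{\max}^{-2}\lambda)$ is bounded by the sum of its values on two regimes.

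If $\lambda\le 1$, the quantity is at most $2^{\beta/2}\le C$.

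If $\lambda\ge1$, then $(1+\lambda)^{\beta/2}\le C\lambda^{\beta/2}$, and the inequality
\[
 \frac{\lambda^{\beta/2}}{1+b\lambda}\le \sup_{\mu\ge0}\frac{\mu^{\beta/2}}{1+\mu}\,b^{-\beta/2},\qquad b=N\tau\ell_{\max}^{-2},
\]
holds, with the supremum finite precisely because $0\le\beta<2$. This gives $C\ell_{\max}^{\beta}(N\tau)^{-\beta/2}$.

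Combining the two regimes gives $C(1+((m-j)\tau)^{-\beta/2})$, with $C$ depending only on $\beta$, $\ell_{\max}$ and the normalization of the Fourier weights. The constant stays bounded for $\beta$ in compact subsets of $[0,2)$; it blows up as $\beta\to2$, and that blow-up is unavoidable for this kind of estimate.

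I do not expect a serious obstacle. The only point needing care is to confirm that the $H^r$ norms used here are exactly the Fourier-weighted ones, so that the multiplier computation is an identity. For the integer derivative-sum norm \eqref{eq:integer-sobolev-norm} they are only equivalent, which changes $C$ by a fixed factor. One should also note that the $n=0$ mode is left unchanged by the product and is covered by the constant term.
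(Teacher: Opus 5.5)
Your proposal is correct and follows essentially the same route as the paper: diagonalize the commuting resolvents in the Fourier basis, use $\ell^r\le\ell_{\max}$ and Bernoulli's inequality to reduce to the multiplier $(1+(2\pi n)^2)^{\beta/2}\bigl(1+(m-j)\tau\ell_{\max}^{-2}(2\pi n)^2\bigr)^{-1}$, and bound it with a scalar inequality of the form $z^{\beta/2}/(1+z)\le C$. One correction to your side remark: in fact $\sup_{\mu\ge0}\mu^{\beta/2}/(1+\mu)\le1$ for all $0\le\beta\le2$ (the paper uses exactly this), so the constant does not blow up as $\beta\to2$ and the estimate even holds at $\beta=2$; the restriction $\beta<2$ only matters later, for the integrability of $(t_m-t_j)^{-\beta/2}$ in the proof of Lemma~\ref{time:prefix-error}.
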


\begin{proof}
Let $f(\xi)=\sum_{n\in\Z}f_n e^{2\pi\mathrm i n\xi}$ be a
trigonometric polynomial, with Fourier coefficients $f_n$.
Since $\partial_\xi^2e^{2\pi\mathrm i n\xi}
=-(2\pi n)^2e^{2\pi\mathrm i n\xi}$, each resolvent acts by
\[
 (I-\tau(\ell^r)^{-2}\partial_\xi^2)^{-1}e^{2\pi\mathrm i n\xi}
 =\frac{e^{2\pi\mathrm i n\xi}}
 {1+\tau(\ell^r)^{-2}(2\pi n)^2}.
\]
The resolvents are diagonal in the same Fourier basis and therefore
commute. The length bound and Bernoulli's inequality give
\[
\begin{aligned}
 \prod_{r=j}^{m-1}(1+\tau(\ell^r)^{-2}(2\pi n)^2)
 &\ge(1+\tau\ell_{\max}^{-2}(2\pi n)^2)^{m-j}\ge1+(m-j)\tau\ell_{\max}^{-2}(2\pi n)^2.
\end{aligned}
\]
For $\beta=0$, the multiplier is bounded by one.
For $0<\beta<2$, the inequalities
$(1+z)^{\beta/2}\le1+z^{\beta/2}$ and
$z^{\beta/2}/(1+z)\le1$, $z\ge0$, yield
\[
\begin{aligned}
 \frac{(1+(2\pi n)^2)^{\beta/2}}
 {\displaystyle\prod_{r=j}^{m-1}
       (1+\tau(\ell^r)^{-2}(2\pi n)^2)}
 &\le\frac{1+|2\pi n|^\beta}
 {1+(m-j)\tau\ell_{\max}^{-2}(2\pi n)^2}\\
 &\le1+\ell_{\max}^{\beta}((m-j)\tau)^{-\beta/2}\\
 &\le C\bigl(1+((m-j)\tau)^{-\beta/2}\bigr).
\end{aligned}
\]
Parseval's identity with the Sobolev weights now gives
\[
\begin{aligned}
 \left\|\prod_{r=j}^{m-1}
       (I-\tau(\ell^r)^{-2}\partial_\xi^2)^{-1}f\right\|_{H^\sigma}^2
 &=\sum_{n\in\Z}
 \frac{(1+(2\pi n)^2)^\sigma|f_n|^2}
 {\left[\displaystyle\prod_{r=j}^{m-1}
       (1+\tau(\ell^r)^{-2}(2\pi n)^2)\right]^2}\\
 &\le C\bigl(1+((m-j)\tau)^{-\beta/2}\bigr)^2
       \sum_{n\in\Z}(1+(2\pi n)^2)^{\sigma-\beta}|f_n|^2\\
 &=C\bigl(1+((m-j)\tau)^{-\beta/2}\bigr)^2
       \|f\|_{H^{\sigma-\beta}}^2.
\end{aligned}
\]
Taking square roots and using the density of trigonometric polynomials
in $H^{\sigma-\beta}$ proves \eqref{time:smoothing}.
The constant depends only on $\beta$ and $\ell_{\max}$, independently
of $j,m,\tau$ and the admissible lengths.
The argument includes $m=j+1$. The mode $n=0$ is unchanged by every
resolvent, which accounts for the constant term in the estimate.
\end{proof}

We use the next estimate in the proof of Theorem~\ref{time:theorem}
and again with $O(\tau)$ defect bounds in
Section~\ref{up:sec:first-order} to obtain first-order convergence.
\begin{lemma}\label{time:prefix-error}
Suppose $(\widehat v_{\mathrm n}^m,\ell^m)\in\mathcal U(R)$
on an admissible prefix $0\le m\le m_*$,
$1\le m_*\le M$, and define
\begin{equation}\label{time:joint-error}
 E^m=\|\widehat v_{\mathrm n}^m+\widehat\kappa(t_m)\|_{H^\sigma}
           +\bigl|\ell^m-|\Gamma(t_m)|\bigr|.
\end{equation}
For the defects $d_{\mathrm n}^j,d_\ell^j$, $0\le j<m_*$, in
\eqref{time:scalar-diffusion},
\begin{equation}\label{time:prefix-bound}
 \max_{0\le m\le m_*}E^m
 \le C_{R,T}\left(E^0+\tau
       +\max_{0\le j<m_*}(\|d_{\mathrm n}^j\|_{L^2}+|d_\ell^j|)\right).
\end{equation}
\end{lemma}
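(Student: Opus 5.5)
We write the error equations for $e^m=\widehat v_{\mathrm n}^m+\widehat\kappa(t_m)$ and $\lambda^m=\ell^m-|\Gamma(t_m)|$ and close them with a discrete Duhamel formula together with the smoothing bound of Lemma~\ref{time:fourier-smoothing}.

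\textbf{Exact solution as a discrete scheme.} Using $\mathcal N(-f,\ell)=-\mathcal N(f,\ell)$, the function $w(t)=-\widehat\kappa(t)$ satisfies $\partial_tw=|\Gamma(t)|^{-2}\partial_\xi^2w+\mathcal N(w,|\Gamma(t)|)$ by \eqref{time:reference-equations}. Taylor expansion in time, with the smoothness from Assumption~\ref{ass:exact-flow}, shows that $(w(t_m),|\Gamma(t_m)|)$ satisfies \eqref{time:scalar-diffusion} with $\ell^m$ replaced by $|\Gamma(t_m)|$ and defects $O(\tau)$ in $L^2\times\R$. The length equation follows in the same way from \eqref{time:g0}.

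\textbf{Error recurrence.} Subtracting the two schemes gives
\[
(I-\tau(\ell^m)^{-2}\partial_\xi^2)e^{m+1}=e^m+\tau\bigl[\mathcal N(\widehat v_{\mathrm n}^m,\ell^m)-\mathcal N(w(t_m),|\Gamma(t_m)|)\bigr]+\tau\bigl((\ell^m)^{-2}-|\Gamma(t_m)|^{-2}\bigr)\partial_\xi^2w(t_{m+1})+\tau(d_{\mathrm n}^m+O(\tau)).
\]
The length-coefficient term has $L^2$ norm bounded by $C|\lambda^m|$, because $w$ is smooth. The nonlinear map $\mathcal N$ is locally Lipschitz from $H^\sigma\times\R$ into $H^{\sigma-1}$, with constant $C_R$ on $\mathcal U(R)$. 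To see this, note that $f^3$ is Lipschitz in $H^\sigma$. The transport coefficient is a point value of $\Lop^{-1}\Dop f\in H^{2}\hookrightarrow C^0$ plus a primitive of $f^2$, and it is Lipschitz by the argument of \eqref{time:K-lipschitz}. Multiplying this coefficient by $\partial_\xi f\in H^{\sigma-1}$ stays Lipschitz, since $H^{\sigma-1}$ is an algebra and $W^{1,\infty}\cdot H^{\sigma-1}\subset H^{\sigma-1}$. Hence the bracket is bounded in $H^{\sigma-1}$ by $C_R E^m$. The length error satisfies $|\lambda^{m+1}|\le|\lambda^m|+C_R\tau E^m+\tau|d_\ell^m|+C\tau^2$, since $\ell\int f^2$ is Lipschitz.

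\textbf{Discrete Duhamel and Gronwall.} Iterating the recurrence expresses $e^m$ as the resolvent product applied to $e^0$, plus a sum over $j$ of $\tau$ times the product $\prod_{r=j}^{m-1}$ applied to the forcing. The estimate \eqref{time:smoothing} with $\beta=0$ handles $e^0$. With $\beta=1$ it handles the $H^{\sigma-1}$ nonlinear terms, giving factors $(1+((m-j)\tau)^{-1/2})$. The $L^2$ terms (defects and the length-coefficient term) need $\beta=\sigma<2$, giving factors $((m-j)\tau)^{-\sigma/2}$, which remain summable because $\sigma<2$. One gap remains. The defect $d_{\mathrm n}^m$ sits at level $m$, while the product starts at $r=j$ for the forcing at step $j$, so the resolvent at step $j$ itself must be included. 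That step acts as $\prod_{r=j}^{m-1}$ after shifting indices, which is consistent.

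This yields
\[
E^m\le C\Bigl(E^0+\tau+\max_j(\|d_{\mathrm n}^j\|+|d_\ell^j|)\Bigr)+C_R\tau\sum_{j<m}\bigl(1+((m-j)\tau)^{-\sigma/2}\bigr)E^j.
\]
A discrete Gronwall inequality with a weakly singular kernel closes the estimate. We prove it by iterating the inequality once: convolving the kernel with itself gives exponent $1-\sigma<0$, which is integrable, so after a finite number of iterations the kernel becomes bounded and the ordinary discrete Gronwall lemma applies. The constant $C_{R,T}$ comes out uniform in $m_*$.

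The main obstacle is the Lipschitz bound for $\mathcal N$ into $H^{\sigma-1}$, in particular the nonlocal point evaluation $(\Lop^{-1}\Dop f)(0)$. It requires the $H^1\to H^1$ Lipschitz property of $f\mapsto\Lop_{f,\ell}^{-1}\Dop_{f,\ell}f$ uniformly on $\mathcal U(R)$. We would obtain this from the resolvent difference identity used in the proof of Lemma~\ref{time:operator-bounds}.
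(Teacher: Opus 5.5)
Your proposal is correct and follows essentially the paper's route. Both arguments subtract the semi-implicit exact scheme using the oddness of $\mathcal N$, bound the $\mathcal N$-difference in $H^{\sigma-1}$ by $C_RE^m$, apply discrete variation of constants with Lemma~\ref{time:fourier-smoothing} for $\beta=0,1,\sigma$, and sum the length recurrence. The differences are minor, and both versions work because $\sigma/2<1$:
\begin{itemize}
\item You bound the length-coefficient term $((\ell^m)^{-2}-|\Gamma(t_m)|^{-2})\partial_\xi^2 w(t_{m+1})$ in $L^2$, so $E^j$ carries the kernel $((m-j)\tau)^{-\sigma/2}$. The paper bounds it in $H^{\sigma-1}$ and gets the kernel $(t_m-t_j)^{-1/2}$.
\item You close with an iterated-kernel, Henry-type discrete Gronwall argument, which relies on the standard discrete Beta-function convolution bound. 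The paper instead absorbs the singular part on time windows of fixed length $d$ with $C_R(d+2\sqrt d)\le\frac12$ and inducts over the finitely many windows.
\end{itemize}
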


\begin{proof}
For this proof, write the normal-velocity and length errors as
\[
 e_{\mathrm n}^m=\widehat v_{\mathrm n}^m+\widehat\kappa(t_m),
 \qquad e_\ell^m=\ell^m-|\Gamma(t_m)|,
 \qquad 0\le m\le m_*.
\]
Using the oddness of $\mathcal N$, subtract the semi-implicit equations
for $-\widehat\kappa$ and $|\Gamma|$ obtained from
\eqref{time:reference-equations} and \eqref{time:g0} from
\eqref{time:scalar-diffusion}. For $0\le m<m_*$, this gives
\[
\begin{aligned}
 \frac{e_{\mathrm n}^{m+1}-e_{\mathrm n}^m}{\tau}
 -(\ell^m)^{-2}\partial_\xi^2e_{\mathrm n}^{m+1}
 ={}&\mathcal N(\widehat v_{\mathrm n}^m,\ell^m)
       -\mathcal N(-\widehat\kappa(t_m),|\Gamma(t_m)|)\\
 &+\bigl(|\Gamma(t_m)|^{-2}-(\ell^m)^{-2}\bigr)
       \partial_\xi^2\widehat\kappa(t_{m+1})\\
 &+d_{\mathrm n}^m+O_{L^2}(\tau),\\
 \frac{e_\ell^{m+1}-e_\ell^m}{\tau}
 ={}&-\ell^m\int_0^1(\widehat v_{\mathrm n}^m)^2\,d\xi
       +|\Gamma(t_m)|\int_0^1\widehat\kappa(\xi,t_m)^2\,d\xi\\
 &+d_\ell^m+O(\tau).
\end{aligned}
\]
The $O(\tau)$ terms are the consistency defects of the smooth exact
equations. Constants in this proof may also depend on that fixed solution.

The inverse-operator identity and elliptic estimates in the proof of
Lemma~\ref{time:operator-bounds} give uniform Lipschitz bounds for
$(f,\ell)\mapsto\Lop_{f,\ell}^{-1}\Dop_{f,\ell}f$ from
$\mathcal U(R)$ into $H^{\sigma+1}$.
Point evaluation is continuous there, and $H^{\sigma-1}$ is an algebra.
The formula \eqref{time:N}, the length bounds, and exact regularity yield
\[
\begin{aligned}
 &\|\mathcal N(\widehat v_{\mathrm n}^m,\ell^m)
       -\mathcal N(-\widehat\kappa(t_m),|\Gamma(t_m)|)\|_{H^{\sigma-1}}
       \le C_RE^m,\\
 &\bigl||\Gamma(t_m)|^{-2}-(\ell^m)^{-2}\bigr|
       \|\partial_\xi^2\widehat\kappa(t_{m+1})\|_{H^{\sigma-1}}
       \le C_R|e_\ell^m|,\\
 &\left|\ell^m\int_0^1(\widehat v_{\mathrm n}^m)^2\,d\xi
       -|\Gamma(t_m)|\int_0^1\widehat\kappa(\xi,t_m)^2\,d\xi\right|
       \le C_RE^m.
\end{aligned}
\]
Discrete variation of constants and Lemma~\ref{time:fourier-smoothing}
with $\beta=0,1,\sigma$ give, respectively,
\[
\begin{aligned}
 \|e_{\mathrm n}^m\|_{H^\sigma}
 \le{}&C\|e_{\mathrm n}^0\|_{H^\sigma}
   +C_R\sum_{j=0}^{m-1}\tau
       \bigl(1+(t_m-t_j)^{-1/2}\bigr)E^j\\
 &+C_R\sum_{j=0}^{m-1}\tau
       \bigl(1+(t_m-t_j)^{-\sigma/2}\bigr)
       \bigl(\|d_{\mathrm n}^j\|_{L^2}+\tau\bigr).
\end{aligned}
\]
Summing the length-error equation gives
\[
 |e_\ell^m|\le |e_\ell^0|
       +C_R\sum_{j=0}^{m-1}\tau E^j
       +\sum_{j=0}^{m-1}\tau\bigl(|d_\ell^j|+C\tau\bigr).
\]
Since $\sigma/2<1$,
\[
 \sum_{j=0}^{m-1}\tau\bigl(1+(t_m-t_j)^{-\sigma/2}\bigr)
 \le T+\frac{T^{1-\sigma/2}}{1-\sigma/2}.
\]
Combining these estimates yields
\begin{equation}\label{time:volterra}
\begin{split}
 E^m\le{}&C_{R,T}\left(E^0+\tau
        +\max_{0\le j<m_*}(\|d_{\mathrm n}^j\|_{L^2}+|d_\ell^j|)\right)\\
 &+C_R\sum_{j=0}^{m-1}\tau
       \left(1+(t_m-t_j)^{-1/2}\right)E^j.
\end{split}
\end{equation}
Enlarge $C_{R,T}$ so that \eqref{time:volterra} also holds for $m=0$.
Choose $d>0$, independent of $\tau$, such that
\[
 C_R(d+2\sqrt d)\le\tfrac12.
\]
The function $t\mapsto1+t^{-1/2}$ is decreasing and integrable near
zero, so for every $\tau>0$,
\[
 \sum_{\substack{j\ge1\\j\tau\le d}}\tau\bigl(1+(j\tau)^{-1/2}\bigr)
 \le\int_0^d(1+t^{-1/2})\,dt=d+2\sqrt d.
\]
For each integer $q\ge0$, split the sum in \eqref{time:volterra}
at $t_j=qd$ for $t_m\in[qd,(q+1)d)$.
The terms with $t_j\ge qd$ are absorbed using the bound above.
Those with $t_j<qd$ have total weight at most $T+2\sqrt T$.
Taking maxima therefore gives
\[
\begin{aligned}
 \max_{\substack{0\le m\le m_*\\t_m<(q+1)d}}E^m
 \le{}&2C_{R,T}\left(E^0+\tau+
       \max_{0\le j<m_*}(\|d_{\mathrm n}^j\|_{L^2}+|d_\ell^j|)\right)\\
 &+\bigl(1+2C_R(T+2\sqrt T)\bigr)
       \max_{\substack{0\le m\le m_*\\t_m<qd}}E^m,
\end{aligned}
\]
where a maximum over an empty set is zero.
Induction for $q=0,\ldots,\lfloor T/d\rfloor$ proves
\eqref{time:prefix-bound}.
\end{proof}

\subsection{Proof of Theorem~\ref{time:theorem}}\label{time:continuation-section}

We use the defect bounds in Lemma~\ref{time:intermediate-equation}
and the stability estimate above to prove existence and the
curvature and length error bounds by induction on the time index.

\begin{proof}[Proof of Theorem~\ref{time:theorem}]
The set $\{(\widehat\kappa(t),|\Gamma(t)|):0\le t\le T\}$ is a compact
subset of $\operatorname{int}\mathcal U(R)$. Choose $\delta>0$ such that
\[
 2\delta<\min_{0\le t\le T}\bigl\{
 R-\|\widehat\kappa(t)\|_{H^\sigma},\,
 \|\widehat\kappa(t)\|_{L^2}-c_0,|\Gamma(t)|-\ell_{\min},\,
 \ell_{\max}-|\Gamma(t)|\bigr\}.
\]
Thus $\|f-\widehat\kappa(t)\|_{H^\sigma}
+\bigl|\ell-|\Gamma(t)|\bigr|\le\delta$ implies
$(f,\ell)\in\operatorname{int}\mathcal U(R)$.
We prove by induction on $m$ that the updates
$x^j\in C^\infty(\Torus;\R^2)$, $0\le j\le m$, exist uniquely,
are regular embeddings, and satisfy
\[
 \max_{0\le j\le m}\bigl(
   \|\widehat\kappa^j-\widehat\kappa(t_j)\|_{H^\sigma}
   +\bigl|\ell^j-|\Gamma(t_j)|\bigr|\bigr)<\delta.
\]
This holds for $m=0$ because $x^0$ is a $C^\infty$ regular embedding
of the exact initial curve.
At each available curve $x^j$, the first identity in
\eqref{time:normal-resolvent} determines $\widehat v_{\mathrm n}^j$
before the next position update is formed. Lemma~\ref{time:exact-step}
identifies it with the normal component of that update, and the same
formula defines the terminal scalar at $j=M$.
Smoothness of the initial curve gives
\[
 \widehat v_{\mathrm n}^0+\widehat\kappa^0
 =\tau(I+\tau\Sop_{\widehat\kappa^0}^0)^{-1}
       \Sop_{\widehat\kappa^0}^0\widehat\kappa^0
 =O_{H^\sigma}(\tau).
\]
Thus $E^0=O(\tau)$.

Assume the assertion holds through some $m<M$.
Then $(\widehat\kappa^j,\ell^j)\in\mathcal U(R)$ for $0\le j\le m$.
Since $x^m$ is $C^\infty$ and regular, Lemma~\ref{time:exact-step}
gives the unique solution $x^{m+1}\in C^\infty(\Torus;\R^2)$
of \eqref{time:bgn} by the spatial regularity argument in its proof.
The curvature and length bounds do not control position or orientation.
For the embedding check, choose a rotation $Q^m\in SO(2)$ and a
translation $b^m\in\R^2$ satisfying
\[
 Q^m\widehat{\mathbf t}^m(0)=\widehat{\mathbf t}(0,t_m),
 \qquad b^m=\widehat X(0,t_m)-Q^m\widehat x^m(0).
\]
Thus $Q^m\widehat x^m+b^m$ has the same initial point and tangent
as $\widehat X(t_m)$. Apply the same rigid motion
$y\mapsto Q^my+b^m$ to the current curve and its update:
\[
 \widehat x^m\longmapsto Q^m\widehat x^m+b^m,
 \qquad
 \widehat x^m+\tau\widehat v^m
 \longmapsto Q^m\widehat x^m+b^m+\tau Q^m\widehat v^m.
\]
This transformation preserves curvature and length and is used only
in the embedding check.
Since $Q^mJ=JQ^m$, the Frenet identities \eqref{eq:orientation} give
\[
\begin{aligned}
 \partial_\xi(Q^m\widehat{\mathbf t}^m)
 &=-\ell^m\widehat\kappa^m J(Q^m\widehat{\mathbf t}^m),\\
 \partial_\xi\widehat{\mathbf t}(t_m)
 &=-|\Gamma(t_m)|\widehat\kappa(t_m)J\widehat{\mathbf t}(t_m).
\end{aligned}
\]
Subtracting the equations gives
\[
 \partial_\xi\bigl(Q^m\widehat{\mathbf t}^m
                     -\widehat{\mathbf t}(t_m)\bigr)
 =-\ell^m\widehat\kappa^m
      J\bigl(Q^m\widehat{\mathbf t}^m-\widehat{\mathbf t}(t_m)\bigr)-\bigl(\ell^m\widehat\kappa^m
           -|\Gamma(t_m)|\widehat\kappa(t_m)\bigr)
      J\widehat{\mathbf t}(t_m).
\]
The tangent difference vanishes at $\xi=0$, and
$|J\widehat{\mathbf t}|=1$. Integrating and taking absolute values
therefore gives, for $0\le\xi\le1$,
\[
\begin{aligned}
 &\bigl|Q^m\widehat{\mathbf t}^m(\xi)
           -\widehat{\mathbf t}(\xi,t_m)\bigr|\\
 &\quad\le\int_0^\xi\ell^m|\widehat\kappa^m(\zeta)|
       \bigl|Q^m\widehat{\mathbf t}^m(\zeta)
                  -\widehat{\mathbf t}(\zeta,t_m)\bigr|\,d\zeta+\int_0^\xi
       \bigl|\ell^m\widehat\kappa^m(\zeta)
              -|\Gamma(t_m)|\widehat\kappa(\zeta,t_m)\bigr|\,d\zeta.
\end{aligned}
\]
Since $\ell^m\|\widehat\kappa^m\|_{L^1}\le\ell_{\max}R$,
Gronwall's inequality in the spatial variable $\xi\in[0,1]$ yields
\[
\begin{aligned}
 \|Q^m\widehat{\mathbf t}^m-\widehat{\mathbf t}(t_m)\|_{L^\infty}
 &\le\exp\!\bigl(\ell^m\|\widehat\kappa^m\|_{L^1}\bigr)
       \|\ell^m\widehat\kappa^m
              -|\Gamma(t_m)|\widehat\kappa(t_m)\|_{L^1}\\
 &\le e^{\ell_{\max}R}\Bigl(
       \ell_{\max}\|\widehat\kappa^m-\widehat\kappa(t_m)\|_{L^1}
       +\bigl|\ell^m-|\Gamma(t_m)|\bigr|
          \|\widehat\kappa(t_m)\|_{L^1}\Bigr)\\
 &\le C\bigl(\|\widehat\kappa^m-\widehat\kappa(t_m)\|_{H^\sigma}
             +\bigl|\ell^m-|\Gamma(t_m)|\bigr|\bigr).
\end{aligned}
\]
The last step uses the fixed exact curvature bound and
$\|f\|_{L^1}\le\|f\|_{H^\sigma}$ on $\Torus$.
The initial positions also agree after alignment. Since
$\partial_\xi\widehat x^m=\ell^m\widehat{\mathbf t}^m$ and
$\partial_\xi\widehat X(t_m)=|\Gamma(t_m)|\widehat{\mathbf t}(t_m)$,
\[
\begin{aligned}
 &Q^m\widehat x^m(\xi)+b^m-\widehat X(\xi,t_m)\\
 &\quad=\int_0^\xi\Bigl[
       \ell^m\bigl(Q^m\widehat{\mathbf t}^m(\zeta)
                     -\widehat{\mathbf t}(\zeta,t_m)\bigr)
       +(\ell^m-|\Gamma(t_m)|)\widehat{\mathbf t}(\zeta,t_m)
       \Bigr]\,d\zeta.
\end{aligned}
\]
Bounding the integral and its derivative therefore gives
\[
 \|Q^m\widehat x^m+b^m-\widehat X(t_m)\|_{C^1}
 \le C\bigl(\|\widehat\kappa^m-\widehat\kappa(t_m)\|_{H^\sigma}
          +\bigl|\ell^m-|\Gamma(t_m)|\bigr|\bigr)
 \le C\delta.
\]

For the velocity, the proof of Lemma~\ref{time:local-estimates}
establishes
\[
 \|\widehat v_{\mathrm n}^m\|_{H^\sigma}
 +\|\widehat v_{\mathrm t}^m\|_{H^{\sigma+1}}\le C_R.
\]
The decomposition
$\widehat v^m=\widehat v_{\mathrm n}^m\widehat{\mathbf n}^m
             +\widehat v_{\mathrm t}^m\widehat{\mathbf t}^m$
and the Frenet identities imply
\[
 \partial_\xi\widehat v^m
 =\bigl(\partial_\xi\widehat v_{\mathrm n}^m
             -\ell^m\widehat\kappa^m\widehat v_{\mathrm t}^m\bigr)
              \widehat{\mathbf n}^m
  +\bigl(\partial_\xi\widehat v_{\mathrm t}^m
             +\ell^m\widehat\kappa^m\widehat v_{\mathrm n}^m\bigr)
              \widehat{\mathbf t}^m.
\]
Since $\sigma>3/2$, the embedding $H^\sigma(\Torus)\hookrightarrow C^1(\Torus)$
and the uniform $L^\infty$ bound on $\ell^m\widehat\kappa^m$ give
\[
 \|Q^m\widehat v^m\|_{C^1}
 \le C_R\bigl(\|\widehat v_{\mathrm n}^m\|_{H^\sigma}
               +\|\widehat v_{\mathrm t}^m\|_{H^{\sigma+1}}\bigr)
 \le C_R.
\]
The triangle inequality now yields
\[
\begin{aligned}
 \|Q^m(\widehat x^m+\tau\widehat v^m)+b^m-\widehat X(t_m)\|_{C^1}
 &\le\|Q^m\widehat x^m+b^m-\widehat X(t_m)\|_{C^1}
        +\tau\|Q^m\widehat v^m\|_{C^1}\\
 &\le C\delta+C_R\tau.
\end{aligned}
\]
Write $d_{\Torus}(\xi,\eta)=\min_{k\in\Z}|\xi-\eta+k|$.
Uniform continuity of $\widehat{\mathbf t}$ on $\Torus\times[0,T]$
gives $a\in(0,1/2)$, independent of $m$ and $\tau$, such that
\[
 |\widehat{\mathbf t}(\xi,t)-\widehat{\mathbf t}(\xi_0,t)|\le\tfrac14
 \quad\text{if }d_{\Torus}(\xi,\xi_0)<a,\quad 0\le t\le T.
\]
Since both tangents have unit length, their scalar product is at least
$3/4$. For this fixed $a$, the set
$\{(\xi,\eta,t)\in\Torus^2\times[0,T]:d_{\Torus}(\xi,\eta)\ge a\}$
is compact. Injectivity of $\widehat X(\cdot,t)$ at every time gives
\[
 \min_{\substack{0\le t\le T\\d_{\Torus}(\xi,\eta)\ge a}}
       |\widehat X(\xi,t)-\widehat X(\eta,t)|>0.
\]
Both $a$ and this minimum depend only on the exact solution. We may
therefore decrease $\delta$ and then restrict $\tau$, independently of
$m$, so that
\[
 C\delta+C_R\tau\le\frac14\min\left\{
 \ell_{\min},\,
 \min_{\substack{0\le t\le T\\d_{\Torus}(\xi,\eta)\ge a}}
       |\widehat X(\xi,t)-\widehat X(\eta,t)|\right\}.
\]
For $d_{\Torus}(\xi,\xi_0)<a$, the $C^1$ estimate above and
$\partial_\xi\widehat X=|\Gamma(t)|\widehat{\mathbf t}$ give
\[
\begin{aligned}
 &Q^m\partial_\xi(\widehat x^m+\tau\widehat v^m)(\xi)
       \cdot\widehat{\mathbf t}(\xi_0,t_m)\\
 &\quad\ge |\Gamma(t_m)|\widehat{\mathbf t}(\xi,t_m)
       \cdot\widehat{\mathbf t}(\xi_0,t_m)-(C\delta+C_R\tau)\ge\tfrac34\ell_{\min}-(C\delta+C_R\tau)
       \ge\tfrac12\ell_{\min}.
\end{aligned}
\]
For any distinct pair with $d_{\Torus}(\xi,\eta)<a$, interchange its
points if necessary and choose lifts with
$0<\eta-\xi=d_{\Torus}(\xi,\eta)<a$. Integration along this interval
with $\xi_0=\xi$ yields
\[
\begin{aligned}
 &Q^m\bigl[(\widehat x^m+\tau\widehat v^m)(\eta)
           -(\widehat x^m+\tau\widehat v^m)(\xi)\bigr]
        \cdot\widehat{\mathbf t}(\xi,t_m)\\
 &\quad=\int_\xi^\eta
       Q^m\partial_\zeta(\widehat x^m+\tau\widehat v^m)(\zeta)
         \cdot\widehat{\mathbf t}(\xi,t_m)\,d\zeta\\
 &\quad\ge\tfrac12\ell_{\min}(\eta-\xi)>0.
\end{aligned}
\]
For the remaining pairs, with $d_{\Torus}(\xi,\eta)\ge a$,
invariance of distances under rigid motions and the $C^0$ estimate give
\[
\begin{aligned}
 &|(\widehat x^m+\tau\widehat v^m)(\xi)
       -(\widehat x^m+\tau\widehat v^m)(\eta)|\\
 &\quad\ge|\widehat X(\xi,t_m)-\widehat X(\eta,t_m)|
       -2(C\delta+C_R\tau)\\
 &\quad\ge\frac12
       \min_{\substack{0\le t\le T\\d_{\Torus}(\xi,\eta)\ge a}}
       |\widehat X(\xi,t)-\widehat X(\eta,t)|>0.
\end{aligned}
\]
Thus the update is injective on $\Torus$. Since $\chi^m$ is invertible,
$x^{m+1}$ is injective.
Its regularity follows from
\[
 |\widehat{\mathbf t}^m+\tau\partial_s\widehat v^m|
 \ge1+\tau(\partial_s\widehat v_{\mathrm t}^m+\widehat\kappa^m\widehat v_{\mathrm n}^m)
 \ge1-C_R\tau\ge\tfrac12.
\]

Equations \eqref{time:resolvent-estimates},
\eqref{time:curvature-increment} and \eqref{time:metric-estimates}
now give
\[
\begin{aligned}
 \|\widehat\kappa^{m+1}\|_{H^\sigma}
 &\le\|\widehat v_{\mathrm n}^m\|_{H^\sigma}+C_R\tau^{1/2}
 \le C_R,\\
 \|\widehat\kappa^{m+1}-\widehat\kappa^m\|_{L^2}
 &\le\|\widehat\kappa^{m+1}+\widehat v_{\mathrm n}^m\|_{L^2}
       +\|\widehat\kappa^m+\widehat v_{\mathrm n}^m\|_{L^2}
 \le C_R\tau^{\sigma/2},\\
 |\ell^{m+1}-\ell^m|&\le C_R\tau.
\end{aligned}
\]
By the choice of $\delta$,
\[
\begin{aligned}
 \ell_{\min}+\delta-C_R\tau
 &\le\ell^{m+1}\le\ell_{\max}-\delta+C_R\tau,\\
 \|\widehat\kappa^{m+1}\|_{L^2}
 &\ge c_0+\delta-C_R\tau^{\sigma/2}.
\end{aligned}
\]
Fix $R_{\mathrm{ext}}>\max\{R,C_R\}$ independently of $m$ and $\tau$.
For sufficiently small $\tau$, the new state satisfies
\[
 (\widehat\kappa^{m+1},\ell^{m+1})\in\mathcal U(R_{\mathrm{ext}}).
\]
The induction hypothesis therefore places all curvature states through
$m+1$ in this fixed enlarged set. The normal resolvent at $j=m+1$
is now available, including when $m+1=M$.
The normal-velocity bounds from Lemma~\ref{time:local-estimates} give,
for $0\le j\le m+1$ and small $\tau$,
\[
 \|\widehat v_{\mathrm n}^j\|_{H^\sigma}\le C_{R_{\mathrm{ext}}},\qquad
 \|\widehat v_{\mathrm n}^j\|_{L^2}
 \ge c_0+\delta-C_{R_{\mathrm{ext}}}\tau^{\sigma/2}\ge c_0.
\]
Thus $(\widehat v_{\mathrm n}^j,\ell^j)\in\mathcal U(C_{R_{\mathrm{ext}}})$.
Increase $C_{R_{\mathrm{ext}}}$ if needed so that
$C_{R_{\mathrm{ext}}}\ge R$, which also places the exact pair in this set.
Lemma~\ref{time:intermediate-equation}, applied with radius
$R_{\mathrm{ext}}$, and Lemma~\ref{time:prefix-error}, with $m_*=m+1$, yield
\[
\begin{aligned}
 \max_{0\le j\le m}
   \bigl(\|d_{\mathrm n}^j\|_{L^2}+|d_\ell^j|\bigr)
 &\le C_R\tau^{(\sigma-1)/2},\\
 \max_{0\le j\le m+1}E^j
 &\le C_{R,T}\tau^{(\sigma-1)/2}.
\end{aligned}
\]
The radii are fixed in terms of $R$ and the prescribed data, so these
constants are independent of $m$ and $\tau$. Together with
\eqref{time:curvature-increment}, this yields
\begin{equation}\label{time:exit-closure}
\begin{aligned}
	\|\widehat\kappa^{m+1}-\widehat\kappa(t_{m+1})\|_{H^\sigma} &\le \|\widehat\kappa^{m+1}+\widehat v_{\mathrm n}^m\|_{H^\sigma}
      +\|\widehat v_{\mathrm n}^m+\widehat\kappa(t_m)\|_{H^\sigma}+\|\widehat\kappa(t_m)
                   -\widehat\kappa(t_{m+1})\|_{H^\sigma}\\
												   &\le  C_R\tau^{1/2}+C_{R,T}\tau^{(\sigma-1)/2}+C\tau
 \le C_{R,T}\tau^{(\sigma-1)/2}.
\end{aligned}
\end{equation}
The length estimate follows from Lemma~\ref{time:prefix-error}.
Choose $\tau_0>0$ to satisfy all the preceding smallness conditions
and $C_{R,T}\tau_0^{(\sigma-1)/2}<\delta$.
For $0<\tau\le\tau_0$,
\[
 \|\widehat\kappa^{m+1}-\widehat\kappa(t_{m+1})\|_{H^\sigma}
 +\bigl|\ell^{m+1}-|\Gamma(t_{m+1})|\bigr|
 \le C_{R,T}\tau^{(\sigma-1)/2}<\delta,
\]
which closes the induction. Thus all updates through $M$ exist
uniquely as $C^\infty$ regular embeddings. The error bound obtained at each
induction step proves \eqref{time:curvature-convergence}.
\end{proof}

\section{Uniform regularity and first-order time convergence}\label{sec:upgrade}

We use the estimates established in Section~\ref{sec:time} to prove
uniform $H^3$ bounds for the curvature and improve the
time-discretization defects to order $\tau$.
By Theorem~\ref{time:theorem},
$(\widehat\kappa^m,\ell^m)\in\mathcal U(R)$ for $0\le m\le M$.
The velocity and metric estimates established in the proof of
Lemma~\ref{time:local-estimates} therefore hold uniformly for
$0\le m<M$ and sufficiently small $\tau$.
Unless stated otherwise, constants in Section~\ref{up:sec:uniform-h3}
depend only on $R,\sigma,\ell_{\min},\ell_{\max}$, and $c_0$.

\begin{theorem}\label{up:thm:time}
Under Assumption~\ref{ass:exact-flow}, for every fixed $3/2<\sigma<2$
and all sufficiently small $\tau=T/M$,
\begin{equation}\label{up:eq:first-order}
 \max_{0\le m\le M}\biggl(
 \|\widehat\kappa^m-\widehat\kappa(t_m)\|_{H^\sigma}
 +\bigl|\ell^m-|\Gamma(t_m)|\bigr|+\|\widehat x^m-\widehat X(t_m)\|_{C^1}\biggr)
 \le C\tau.
\end{equation}
The constant is independent of $m,\tau$.
\end{theorem}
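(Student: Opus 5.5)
The argument has two stages. First we prove uniform higher regularity of the time-semidiscrete curvature. Then we re-run the stability estimate of Lemma~\ref{time:prefix-error} with defects of order $\tau$.

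\textbf{Uniform $H^3$ bound.} Theorem~\ref{time:theorem} places every state in $\mathcal U(R)$, with $\|\widehat v_{\mathrm n}^m\|_{H^\sigma}\le C$. We then bound $\widehat v_{\mathrm n}^m$ in $H^{\sigma+r}$ for $r=1,2,\dots$ until $H^3$ (in fact $H^{5}$ to have room) is reached. To do this we take the recurrence \eqref{time:scalar-diffusion} and view it as a discrete parabolic equation
\[
 (I-\tau(\ell^m)^{-2}\partial_\xi^2)\widehat v_{\mathrm n}^{m+1}=\widehat v_{\mathrm n}^m+\tau\bigl(\mathcal N(\widehat v_{\mathrm n}^m,\ell^m)+d_{\mathrm n}^m\bigr).
\]
The defect $d_{\mathrm n}^m$ must be re-expressed exactly, using the curvature identity \eqref{time:exact-curvature} and the coordinate expansions \eqref{time:eta}, rather than merely estimated in $L^2$. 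At each level this shows $\|d_{\mathrm n}^m\|_{H^{s-1}}\le C(1+\tau\|\widehat v_{\mathrm n}^m\|_{H^{s+1}})$, and the $\tau\|\cdot\|_{H^{s+1}}$ piece is absorbed by the implicit diffusion. Since $\mathcal N$ loses one derivative, discrete smoothing (Lemma~\ref{time:fourier-smoothing} with $\beta=1$) plus a discrete Gronwall argument then gives $\|\widehat v_{\mathrm n}^m\|_{H^{s}}\le C$ from a bound in $H^{s-1}$, provided the initial datum is smooth. It is: $\widehat v_{\mathrm n}^0$ is a resolvent of the smooth $\widehat\kappa^0$, so $\|\widehat v_{\mathrm n}^0\|_{H^s}\le C\|\widehat\kappa^0\|_{H^s}$.

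The near-contractive high-order estimate mentioned in the introduction is obtained by testing the recurrence with $(-\partial_\xi^2)^{s}\widehat v_{\mathrm n}^{m+1}$. The diffusion term dominates, the transport part of $\mathcal N$ is handled by a commutator and integration by parts, and the resulting bound has factor $1+C\tau$. This bootstrap is the step I expect to be the main obstacle. The geometric defect couples $\widehat v_{\mathrm n}^m$ to the coordinate change $\chi^{m+1}\circ(\chi^m)^{-1}$, so one must check that each higher derivative of the composition remainder costs at most $\tau$ times one extra derivative, which the implicit smoothing can absorb. With $\|\widehat v_{\mathrm n}^m\|_{H^3}$ and $\|\widehat\kappa^m\|_{H^3}$ uniformly bounded, the same bounds for $\widehat v_{\mathrm t}^m$ follow by elliptic regularity, as in \eqref{time:additional-derivative}.

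\textbf{First-order defects and conclusion.} With uniform $H^3$ bounds, every place in the proofs of Lemmas~\ref{time:local-estimates} and~\ref{time:intermediate-equation} where a power $\tau^{(\sigma-r)/2}$ appeared now improves to $\tau$. For example, $\|\widehat\kappa^m+\widehat v_{\mathrm n}^m\|_{H^1}=\tau\|\Sop(I+\tau\Sop)^{-1}\widehat\kappa^m\|_{H^1}\le C\tau\|\widehat\kappa^m\|_{H^3}$, and $\tau\|\widehat v_{\mathrm n}^m\|_{H^2}\le C\tau$. Hence $\|d_{\mathrm n}^m\|_{L^2}+|d_\ell^m|\le C\tau$. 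Since $E^0=O(\tau)$, Lemma~\ref{time:prefix-error} gives $E^m\le C\tau$. Then, exactly as in \eqref{time:exit-closure} but with the curvature increment now bounded by $C\tau$ in $H^\sigma$, we obtain the curvature and length parts of \eqref{up:eq:first-order}.

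\textbf{Position error.} Here there is no rigid motion: the marked point and its tangent must themselves be tracked. The $C^1$ error then follows from the tangent Gronwall argument in the proof of Theorem~\ref{time:theorem}, provided we show $|\widehat x^m(0)-\widehat X(0,t_m)|+|\widehat{\mathbf t}^m(0)-\widehat{\mathbf t}(0,t_m)|\le C\tau$. The point $x^m(0)$ moves by $\tau v^m(0)=\tau(\widehat v_{\mathrm n}^m\widehat{\mathbf n}^m+\widehat v_{\mathrm t}^m\widehat{\mathbf t}^m)(0)$. The exact marked point moves by $-\widehat\kappa\widehat{\mathbf n}+\widehat\alpha\widehat{\mathbf t}$ as in \eqref{eq:exact-marked-point}, and $\widehat v_{\mathrm t}^m=\Lop^{-1}\Dop\widehat v_{\mathrm n}^m$ is $O(\tau)$-close to $\widehat\alpha(t_m)$ by \eqref{time:alphaM} and the Lipschitz bounds. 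For the tangent, its rotation angle at $\xi=0$ evolves by $\partial_s v_{\mathrm n}-\kappa v_{\mathrm t}$ plus $O(\tau)$, matching the exact angular velocity. Summing these local errors by a discrete Gronwall argument over $M$ steps gives an $O(\tau)$ error for the marked point and tangent, which completes the proof of \eqref{up:eq:first-order}.
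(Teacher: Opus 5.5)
Your treatment of the first-order defects, of the curvature and length errors via Lemma~\ref{time:prefix-error}, and of the marked point and tangent is essentially the paper's argument. For the last part this includes $\widehat v_{\mathrm t}^m-\widehat\alpha(t_m)=O(\tau)$ from the Lipschitz bound for $\Lop^{-1}\Dop$, discrete Gronwall, and Frenet reconstruction. The genuine difference is the uniform $H^3$ bound, and there your route is plausible but its central step is only asserted.

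You bootstrap regularity through the scalar recurrence \eqref{time:scalar-diffusion}. That requires bounding $d_{\mathrm n}^m$ in $H^{s-1}$, which means redoing every remainder in the proof of Lemma~\ref{time:intermediate-equation} in high norms:
\begin{itemize}
\item the Taylor remainders of compositions with $\chi^m\circ(\chi^{m+1})^{-1}$;
\item the replacement of $\widehat\kappa^m$ by $-\widehat v_{\mathrm n}^m$ inside the nonlocal operators;
\item the coefficient change at level $m+1$.
\end{itemize}
Each derivative loss there does come with a factor $\tau$, so this is believable. Still, it is the load-bearing step and you only state its outcome. Two details also need care. The defect contains a next-level term $\tau\|\widehat v_{\mathrm n}^{m+1}\|_{H^{s+1}}$ from $\ell^{m+1}\ne\ell^m$. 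And in the variation-of-constants version there is no dissipative term, so the $\tau\|\cdot\|_{H^{s+1}}$ pieces must be handled by one-step smoothing, $\tau\|\widehat v_{\mathrm n}^{j}\|_{H^{s+1}}\le C\tau^{1/2}\|\widehat v_{\mathrm n}^{j-1}\|_{H^{s}}+\dots$, rather than literally absorbed.

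The paper never estimates the defect in high norms. It splits each step into two exact maps.
\begin{itemize}
\item Resolvent step: it differentiates \eqref{up:eq:scalar} up to order three, tests with the derivatives of $\widehat v_{\mathrm n}^m$, and uses $a(a+b)=\tfrac12[a^2-b^2+(a+b)^2]$ to get the near-contraction \eqref{up:eq:near-contraction}. This estimate has $\widehat\kappa^m$ as data; it is not an energy estimate for \eqref{time:scalar-diffusion} tested with $(-\partial_\xi^2)^s\widehat v_{\mathrm n}^{m+1}$.
\item Geometric step: it uses the exact identity \eqref{time:exact-curvature}, the metric cancellations \eqref{up:eq:metric-cancel}, and the composition bound \eqref{up:eq:composition} to get \eqref{up:eq:geo-bound}. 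The cancellations make the metric factor $O(\tau)$ in $H^3$ with no derivative loss.
\end{itemize}
Together these give $\|\widehat\kappa^{m+1}\|_{H^3}\le(1+C\tau)\|\widehat\kappa^m\|_{H^3}+C\tau$ and a direct Gronwall argument. The $O(\tau)$ defects are then needed only in $L^2$.

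Your route reuses the parabolic-smoothing machinery of Section~\ref{sec:time} and does not need the cancellation identities. The cost is a much longer high-norm remainder analysis. The paper's route is shorter and rests only on exact identities.
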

The proof is given in Section~\ref{up:sec:first-order}.

\subsection{Uniform curvature bounds in \texorpdfstring{$H^3$}{H3}}
\label{up:sec:uniform-h3}

The following lemma establishes uniform $H^3$ bounds for the curvature
and normal velocity. These bounds are used in
Section~\ref{up:sec:first-order} to estimate the defects by $C\tau$.

\begin{lemma}[Uniform $H^3$ bounds]\label{up:lem:resolvent}
The time-semidiscrete solution satisfies
\begin{equation}\label{up:eq:uniform-H3}
 \max_{0\le m\le M}
 \bigl(\|\widehat\kappa^m\|_{H^3}+\|\widehat v_{\mathrm n}^m\|_{H^3}\bigr)\le C.
\end{equation}
Here $C$ may also depend on $T$ and $\|\widehat\kappa^0\|_{H^3}$.
\end{lemma}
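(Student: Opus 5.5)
The plan is to propagate higher regularity along the scalar recurrence \eqref{time:scalar-diffusion} for $w^m:=\widehat v_{\mathrm n}^m$, working in $H^3$ (more precisely in $H^{\sigma+\theta}$ steps, bootstrapping from $H^\sigma$ up to $H^3$), with the low-norm bounds of Theorem~\ref{time:theorem} as input. Theorem~\ref{time:theorem} gives $(\widehat\kappa^m,\ell^m)\in\mathcal U(R)$ for all $m$, hence uniform coercivity of $\Lop$, bounds on $\Kop$, and the velocity and metric estimates from the proof of Lemma~\ref{time:local-estimates}. Rather than the defect form, I would use the exact one-step identity behind \eqref{time:schur-recurrence}. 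That identity expresses $w^{m+1}$ as $(I+\tau\Sop_{\widehat\kappa^{m+1}}^{m+1})^{-1}\widehat\kappa^{m+1}$, with $\widehat\kappa^{m+1}$ given exactly by \eqref{time:exact-curvature} composed with the coordinate change $\chi^m\circ(\chi^{m+1})^{-1}$. So the map $w^m\mapsto w^{m+1}$ is a resolvent of $-(\ell^{m+1})^{-2}\partial_\xi^2$ applied to $w^m$ plus $\tau$ times lower-order nonlinear terms. The aim is a near-contractive estimate of the form
\[
 \|w^{m+1}\|_{H^r}\le(1+C\tau)\|w^m\|_{H^r}+C\tau\bigl(1+\|w^{m+1}\|_{H^{r+1}}^{\theta}\cdots\bigr),
\]
which is better organized as a Duhamel sum using Lemma~\ref{time:fourier-smoothing}, so that derivative losses are absorbed by the smoothing factor $(t_m-t_j)^{-\beta/2}$.

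The steps are as follows.

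\emph{Step 1.} Write the recurrence as $w^{m+1}=(I-\tau(\ell^m)^{-2}\partial_\xi^2)^{-1}\bigl(w^m+\tau F^m\bigr)$. Here $F^m=\mathcal N(w^m,\ell^m)+d_{\mathrm n}^m$, but $d_{\mathrm n}^m$ is kept in its exact structural form (the terms of \eqref{time:curvature-expansion}, the coefficient changes, and $((w^m)^2+\Kop)(w^{m+1}-w^m)$), not as a mere $L^2$ bound.

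\emph{Step 2.} Show a tame estimate $\|F^m\|_{H^{r-1}}\le C(1+\|w^m\|_{H^r}+\|w^{m+1}\|_{H^r})$ for $\sigma\le r\le3$. Its coefficients depend only on the $\mathcal U(R)$ data and $H^\sigma$ norms. This uses the facts that $H^{\sigma-1}$ is an algebra, Moser-type product estimates, and elliptic regularity of $\Lop^{-1}\Dop$, which gains one derivative. The structure of $\mathcal N$ matters here: the transport coefficient involves $\Lop^{-1}\Dop f$ evaluated at a point plus a primitive, and is therefore one derivative smoother than $f$. So $\mathcal N(f,\ell)\in H^{r-1}$ needs only $f\in H^r$, linearly in the top norm. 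The geometric terms in $d_{\mathrm n}^m$ of the form $\tau\times(\text{top derivative})$ become $O(\tau)$ times $\|w\|_{H^{r+1}}$. The resolvent gain $\tau\|(I-\tau\partial^2)^{-1}g\|_{H^{r+1}}\le\|g\|_{H^{r-1}}$ absorbs these.

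\emph{Step 3.} Apply discrete variation of constants with Lemma~\ref{time:fourier-smoothing} (taking $\beta=1$, shifted to $H^r$), which gives
\[
\|w^m\|_{H^r}\le C\|w^0\|_{H^r}+C\sum_j\tau\bigl(1+(t_m-t_j)^{-1/2}\bigr)\bigl(1+\|w^j\|_{H^r}\bigr).
\]
Then close with the singular Gronwall splitting used in Lemma~\ref{time:prefix-error}. The initial term is controlled by $\|\widehat\kappa^0\|_{H^3}$, since $w^0=-(I+\tau\Sop^0)^{-1}\widehat\kappa^0$ and $\Sop$-resolvents are bounded on $H^3$ by the analogue of \eqref{time:S-domain} at higher order. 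Iterating from $r=\sigma$ through $r=\sigma+1$ to $r=3$ yields the $w$ part of \eqref{up:eq:uniform-H3}.

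\emph{Step 4.} Transfer the bound to $\widehat\kappa^m$. Compose \eqref{time:exact-curvature} with the coordinate change, using higher-order analogues of \eqref{time:metric-estimates}, which themselves follow from the $H^3$ velocity bounds. Then $\|\widehat\kappa^{m+1}+w^m\|_{H^3}\le C\tau\|w^m\|_{H^4}$-type terms arise. To avoid the $H^4$ loss, I would instead invert the resolvent directly: $\widehat\kappa^{m}=-(I+\tau\Sop^m)w^m$ gives $\|\widehat\kappa^m\|_{H^3}\le\|w^m\|_{H^3}+\tau\|w^m\|_{H^5}$, which is also lossy. So Step 3 should really be run for the pair $(w^m,\widehat\kappa^m)$ with the parabolic gain $\tau\|w^{m+1}\|_{H^{r+2}}\lesssim\|\cdot\|_{H^r}$ from the implicit resolvent.

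The main obstacle is exactly this derivative bookkeeping. In the exact curvature update, $\tau\partial_s v$ terms and compositions with $\chi^m\circ(\chi^{m+1})^{-1}$ each cost one derivative. These losses must be shown to be either multiplied by $\tau$ and absorbed by the implicit resolvent's two-derivative gain, or to be of transport form, whose top-order contributions cancel via \eqref{time:nonlocal-cancellation}. I would first verify that, after the cancellation, every top-order term in $F^m$ is of the form $a\,\partial_\xi w$ with $a$ bounded in $H^{r}$. The commutator $[\partial_\xi^{r},a\partial_\xi]$ is then lower order, and an energy estimate in $H^r$ gives the factor $1+C\tau$ directly, without relying on smoothing.
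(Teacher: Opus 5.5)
Your proposal leaves the decisive step unproved. You acknowledge this yourself: the ``derivative bookkeeping'' you call the main obstacle is the whole content of the lemma.

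In Step~2 you assert a tame bound for $F^m=\mathcal N(w^m,\ell^m)+d_{\mathrm n}^m$, with $d_{\mathrm n}^m$ in ``structural form''. But $d_{\mathrm n}^m$ is only defined as the residual of \eqref{time:scalar-diffusion}. Its pieces are the Taylor remainders behind \eqref{time:eta} and \eqref{time:curvature-expansion}, the replacement of $\widehat\kappa^m$ by $-w^m$ inside $\widehat v_{\mathrm t}^m$, and the coefficient change to $\Sop_{\widehat\kappa^{m+1}}^{m+1}$. The paper controls these only in $L^2$. In $H^{r-1}$ they involve either $\|\widehat\kappa^{m+1}\|_{H^r}$ or $\tau$ times norms of $w$ of order $r+1$ and higher. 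These are exactly the quantities a scalar Duhamel argument for $w$ does not track.

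Step~4 then concedes that both passages between $w$ and $\widehat\kappa$ lose derivatives. ``Run Step~3 for the pair'' is never formulated, and the last paragraph replaces an argument by a structural claim still to be checked. Appealing to \eqref{time:nonlocal-cancellation} cannot supply that claim. It is an algebraic identification of the $O(\tau)$ terms, valid modulo remainders bounded only in $L^2$, and it says nothing about their derivatives. A coupled Duhamel argument for the pair might be completed with enough care, but as written the proposal does not do it.

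The missing idea is the pair of exact metric identities \eqref{up:eq:metric-cancel}, which follow from $\partial_s^2x^{m+1}=v_{\mathrm n}^m\mathbf n^m$ via \eqref{time:strong-identities}. They are $\partial_\xi(\widehat{\mathbf t}^m\cdot\partial_s\widehat v^m)=-\ell^m\widehat\kappa^m(\widehat{\mathbf n}^m\cdot\partial_s\widehat v^m)$ and $\partial_\xi|\widehat{\mathbf t}^m+\tau\partial_s\widehat v^m|=\tau\ell^m(\widehat{\mathbf n}^m\cdot\partial_s\widehat v^m)\widehat v_{\mathrm n}^m/|\widehat{\mathbf t}^m+\tau\partial_s\widehat v^m|$. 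They show that the metric factor in \eqref{time:exact-curvature} and the coordinate change $\chi^m\circ(\chi^{m+1})^{-1}$ are one derivative smoother than the naive count. Hence the loss you describe, where each $\tau\partial_s v$ costs a derivative, does not occur. The paper obtains $\|\widehat\kappa^{m+1}\|_{H^3}\le(1+C\tau)\|\widehat v_{\mathrm n}^m\|_{H^3}+C\tau(1+\|\widehat\kappa^m\|_{H^3})$ with no $H^4$ norm.

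The complementary step is a direct $H^3$ energy estimate for \eqref{up:eq:scalar} at a fixed level:
\begin{itemize}
\item elliptic regularity for $\widehat v_{\mathrm t}^m$ makes $(\widehat\kappa^m)^2\widehat v_{\mathrm n}^m+\Dop_{\widehat\kappa^m}^m\widehat v_{\mathrm t}^m$ tame in $H^2$;
\item one integration by parts moves the extra derivative onto the dissipation;
\item $\|\widehat\kappa^m\|_{L^2}\ge c_0$ absorbs the constant term.
\end{itemize}
This gives $\|\widehat v_{\mathrm n}^m\|_{H^3}\le(1+C\tau)\|\widehat\kappa^m\|_{H^3}$. Alternating the two near-contractions $\widehat\kappa^m\to\widehat v_{\mathrm n}^m\to\widehat\kappa^{m+1}$ and applying discrete Gronwall to $\|\widehat\kappa^m\|_{H^3}$ proves the lemma. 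No Duhamel sum, smoothing lemma, bootstrap through $H^{\sigma+1}$, or defect form is needed.
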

\begin{proof}
Fix $0\le m<M$. Theorem~\ref{time:theorem} and the proof of
Lemma~\ref{time:local-estimates} give
\[
 \|\widehat\kappa^m\|_{H^\sigma}
 +\|\widehat v_{\mathrm n}^m\|_{H^\sigma}
 +\|\widehat v_{\mathrm t}^m\|_{H^{\sigma+1}}\le C.
\]
Since $\sigma>3/2$, Sobolev embedding yields
\begin{equation}\label{up:eq:velocity-uniform-bounds}
 \|\widehat\kappa^m\|_{W^{1,\infty}}
 +\|\widehat v_{\mathrm n}^m\|_{W^{1,\infty}}
 +\|\widehat v_{\mathrm t}^m\|_{W^{2,\infty}}\le C.
\end{equation}
For smooth periodic functions $f,g$ and every fixed integer $j\ge1$, we use the
following product estimate, c.f., \cite[Chapter~13, Proposition~3.7]{Taylor2011}:
\[
 \|fg\|_{H^j}
 \le C_j\bigl(\|f\|_{L^\infty}\|g\|_{H^j}
            +\|g\|_{L^\infty}\|f\|_{H^j}\bigr).
\]

We first prove
\begin{equation}\label{up:eq:near-contraction}
 \|\widehat v_{\mathrm n}^m\|_{H^3}\le(1+C\tau)\|\widehat\kappa^m\|_{H^3}.
\end{equation}
The normal resolvent equation reads
\begin{equation}\label{up:eq:scalar}
\begin{aligned}
 \widehat v_{\mathrm n}^m-\tau(\ell^m)^{-2}\partial_\xi^2\widehat v_{\mathrm n}^m=-\widehat\kappa^m-\tau\bigl[(\widehat\kappa^m)^2\widehat v_{\mathrm n}^m+\Dop_{\widehat\kappa^m}^m\widehat v_{\mathrm t}^m\bigr].
\end{aligned}
\end{equation}
The second identity in \eqref{time:normal-resolvent},
$\Lop_{\widehat\kappa^m}^m\widehat v_{\mathrm t}^m
=\Dop_{\widehat\kappa^m}^m\widehat v_{\mathrm n}^m$,
reads, by \eqref{time:LD},
\[
 -(\ell^m)^{-2}\partial_\xi^2\widehat v_{\mathrm t}^m
 =\Dop_{\widehat\kappa^m}^m\widehat v_{\mathrm n}^m
       -(\widehat\kappa^m)^2\widehat v_{\mathrm t}^m.
\]
By \eqref{time:local-set}, \eqref{up:eq:velocity-uniform-bounds},
and the product estimate,
\[
\begin{aligned}
 \|\Dop_{\widehat\kappa^m}^m\widehat v_{\mathrm n}^m\|_{H^1}
 &\le C\bigl(\|(\partial_\xi\widehat\kappa^m)\widehat v_{\mathrm n}^m\|_{H^1}
             +\|\widehat\kappa^m\partial_\xi\widehat v_{\mathrm n}^m\|_{H^1}\bigr)\le C(\|\widehat\kappa^m\|_{H^2}+\|\widehat v_{\mathrm n}^m\|_{H^2}),\\
 \|(\widehat\kappa^m)^2\widehat v_{\mathrm t}^m\|_{H^1}&\le C.
\end{aligned}
\]
The elliptic regularity estimate
\cite[Section~6.3.1, Theorem~2]{Evans2010} controls the $H^3$ norm of
$\widehat v_{\mathrm t}^m$. 
\begin{equation}\label{up:eq:V-high}
\begin{aligned}
 \|\widehat v_{\mathrm t}^m\|_{H^3}
 &\le C\bigl(\|\widehat v_{\mathrm t}^m\|_{L^2}+\|\Dop_{\widehat\kappa^m}^m\widehat v_{\mathrm n}^m\|_{H^1}
                 +\|(\widehat\kappa^m)^2\widehat v_{\mathrm t}^m\|_{H^1}\bigr)\\
 &\le C(1+\|\widehat\kappa^m\|_{H^2}+\|\widehat v_{\mathrm n}^m\|_{H^2}).
\end{aligned}
\end{equation}
The same product estimate gives
\[
\begin{aligned}
 \|(\widehat\kappa^m)^2\widehat v_{\mathrm n}^m\|_{H^2}
 &\le C(\|\widehat\kappa^m\|_{H^2}+\|\widehat v_{\mathrm n}^m\|_{H^2}),\\
 \|\Dop_{\widehat\kappa^m}^m\widehat v_{\mathrm t}^m\|_{H^2}
 &\le C\bigl(\|(\partial_\xi\widehat\kappa^m)\widehat v_{\mathrm t}^m\|_{H^2}
             +\|\widehat\kappa^m\partial_\xi\widehat v_{\mathrm t}^m\|_{H^2}\bigr)\\
 &\le C(\|\widehat\kappa^m\|_{H^3}+\|\widehat v_{\mathrm t}^m\|_{H^3}).
\end{aligned}
\]
Combining these bounds with \eqref{up:eq:V-high} yields
\begin{equation}\label{up:eq:P-tame}
\begin{aligned}
 \|(\widehat\kappa^m)^2\widehat v_{\mathrm n}^m+\Dop_{\widehat\kappa^m}^m\widehat v_{\mathrm t}^m\|_{H^2}\le C(1+\|\widehat\kappa^m\|_{H^3}+\|\widehat v_{\mathrm n}^m\|_{H^3}).
\end{aligned}
\end{equation}

Apply $\partial_\xi^j$ to \eqref{up:eq:scalar}, take the
$L^2(\Torus)$ inner product with $\partial_\xi^j\widehat v_{\mathrm n}^m$,
and sum over $j=0,1,2,3$. Periodic integration by parts and the
identity $a(a+b)=\tfrac12[a^2-b^2+(a+b)^2]$ give
\[
\begin{aligned}
 &\frac12\bigl(\|\widehat v_{\mathrm n}^m\|_{H^3}^2
                   -\|\widehat\kappa^m\|_{H^3}^2\bigr)
 +\frac12\|\widehat\kappa^m+\widehat v_{\mathrm n}^m\|_{H^3}^2
 +\frac{\tau}{(\ell^m)^2}\|\partial_\xi\widehat v_{\mathrm n}^m\|_{H^3}^2\\
 &\qquad=-\tau\sum_{j=0}^3
 \bigl(\partial_\xi^j[(\widehat\kappa^m)^2\widehat v_{\mathrm n}^m
       +\Dop_{\widehat\kappa^m}^m\widehat v_{\mathrm t}^m],
        \partial_\xi^j\widehat v_{\mathrm n}^m\bigr)_{L^2}.
\end{aligned}
\]
For any smooth periodic $f$, integration by parts yields
\[
\begin{aligned}
 \sum_{j=0}^3(\partial_\xi^j f,\partial_\xi^j\widehat v_{\mathrm n}^m)_{L^2}
 &=\sum_{j=0}^2(\partial_\xi^j f,\partial_\xi^j\widehat v_{\mathrm n}^m)_{L^2}
   -(\partial_\xi^2 f,\partial_\xi^4\widehat v_{\mathrm n}^m)_{L^2},\\
 \tau\left|\sum_{j=0}^3(\partial_\xi^j f,\partial_\xi^j\widehat v_{\mathrm n}^m)_{L^2}\right|
 &\le C\tau\|f\|_{H^2}
       \bigl(\|\widehat v_{\mathrm n}^m\|_{H^3}
             +\|\partial_\xi\widehat v_{\mathrm n}^m\|_{H^3}\bigr)\\
 &\le\frac{\tau}{2(\ell^m)^2}\|\partial_\xi\widehat v_{\mathrm n}^m\|_{H^3}^2
       +C\tau\bigl(\|f\|_{H^2}^2+\|\widehat v_{\mathrm n}^m\|_{H^3}^2\bigr).
\end{aligned}
\]
Taking $f=(\widehat\kappa^m)^2\widehat v_{\mathrm n}^m
              +\Dop_{\widehat\kappa^m}^m\widehat v_{\mathrm t}^m$
and using \eqref{up:eq:P-tame} gives
\begin{equation}\label{up:eq:high-energy}
 \frac12\bigl(\|\widehat v_{\mathrm n}^m\|_{H^3}^2-\|\widehat\kappa^m\|_{H^3}^2\bigr)
       +\frac12\|\widehat\kappa^m+\widehat v_{\mathrm n}^m\|_{H^3}^2 + \frac{\tau}{2(\ell^m)^2}\|\partial_\xi\widehat v_{\mathrm n}^m\|_{H^3}^2
 \le C\tau(1+\|\widehat\kappa^m\|_{H^3}^2+\|\widehat v_{\mathrm n}^m\|_{H^3}^2).
\end{equation}
Since $(\widehat\kappa^m,\ell^m)\in\mathcal U(R)$,
\eqref{time:local-set} and \eqref{eq:integer-sobolev-norm} imply
\[
 \|\widehat\kappa^m\|_{H^3}
 \ge\|\widehat\kappa^m\|_{L^2}\ge c_0,
 \qquad 1\le c_0^{-2}\|\widehat\kappa^m\|_{H^3}^2.
\]
Thus \eqref{up:eq:high-energy} yields, for small $\tau$,
\[
 (1-C\tau)\|\widehat v_{\mathrm n}^m\|_{H^3}^2
 \le(1+C\tau)\|\widehat\kappa^m\|_{H^3}^2+C\tau\le\bigl[1+C(1+c_0^{-2})\tau\bigr]\|\widehat\kappa^m\|_{H^3}^2.
\]
Enlarge $C$ to absorb $1+c_0^{-2}$ and take $C\tau\le1/2$. Since
\[
 \left(\frac{1+C\tau}{1-C\tau}\right)^{1/2}
 =\left(1+\frac{2C\tau}{1-C\tau}\right)^{1/2}
 \le(1+4C\tau)^{1/2}\le1+2C\tau,
\]
we obtain
\[
\begin{aligned}
 \|\widehat v_{\mathrm n}^m\|_{H^3}
 &\le\left(\frac{1+C\tau}{1-C\tau}\right)^{1/2}
          \|\widehat\kappa^m\|_{H^3}
 \le(1+2C\tau)\|\widehat\kappa^m\|_{H^3}.
\end{aligned}
\]
Increasing $C$ once more proves \eqref{up:eq:near-contraction}.

We next prove
\begin{equation}\label{up:eq:geo-bound}
 \|\widehat\kappa^{m+1}\|_{H^3}
\le(1+C\tau)\|\widehat v_{\mathrm n}^m\|_{H^3}+C\tau(1+\|\widehat\kappa^m\|_{H^3}).
\end{equation}
The identities \eqref{time:strong-identities} give
\begin{equation}\label{up:eq:metric-cancel}
\begin{aligned}
 \partial_\xi(\widehat{\mathbf t}^m\cdot\partial_s\widehat v^m)
   &=-\ell^m\widehat\kappa^m(\widehat{\mathbf n}^m\cdot\partial_s\widehat v^m),\\
 \partial_\xi|\widehat{\mathbf t}^m+\tau\partial_s\widehat v^m|
   &=\tau\ell^m
     \frac{(\widehat{\mathbf n}^m\cdot\partial_s\widehat v^m)\widehat v_{\mathrm n}^m}
          {|\widehat{\mathbf t}^m+\tau\partial_s\widehat v^m|}.
\end{aligned}
\end{equation}
By \eqref{time:ABCq},
\[
\begin{aligned}
 \widehat{\mathbf t}^m\cdot\partial_s\widehat v^m
 &=(\ell^m)^{-1}\partial_\xi\widehat v_{\mathrm t}^m
       +\widehat\kappa^m\widehat v_{\mathrm n}^m,\\
 \widehat{\mathbf n}^m\cdot\partial_s\widehat v^m
 &=(\ell^m)^{-1}\partial_\xi\widehat v_{\mathrm n}^m
       -\widehat\kappa^m\widehat v_{\mathrm t}^m.
\end{aligned}
\]
By \eqref{time:local-set}, \eqref{up:eq:velocity-uniform-bounds},
and the product estimate,
\[
\begin{aligned}
 \|\widehat{\mathbf t}^m\cdot\partial_s\widehat v^m\|_{L^\infty}
 +\|\widehat{\mathbf n}^m\cdot\partial_s\widehat v^m\|_{L^\infty}
   &\le C,\\
 \|\widehat{\mathbf n}^m\cdot\partial_s\widehat v^m\|_{H^1}
   &\le C(1+\|\widehat v_{\mathrm n}^m\|_{H^2}),\\
 \|\widehat{\mathbf n}^m\cdot\partial_s\widehat v^m\|_{H^2}
   &\le C(1+\|\widehat v_{\mathrm n}^m\|_{H^3}+\|\widehat\kappa^m\|_{H^2}).
\end{aligned}
\]
By \eqref{time:ABCq}, periodicity, and \eqref{up:eq:velocity-uniform-bounds},
\[
\begin{aligned}
 \int_0^1\widehat{\mathbf t}^m\cdot\partial_s\widehat v^m\,d\xi
 &=(\ell^m)^{-1}\bigl[\widehat v_{\mathrm t}^m(1)
                         -\widehat v_{\mathrm t}^m(0)\bigr]
       +\int_0^1\widehat\kappa^m\widehat v_{\mathrm n}^m\,d\xi=\int_0^1\widehat\kappa^m\widehat v_{\mathrm n}^m\,d\xi,\\
 \left|\int_0^1\widehat{\mathbf t}^m\cdot\partial_s\widehat v^m\,d\xi\right|
 &\le\|\widehat\kappa^m\|_{L^2}
       \|\widehat v_{\mathrm n}^m\|_{L^2}\le C.
\end{aligned}
\]
Combining this mean bound with the first identity in
\eqref{up:eq:metric-cancel} gives
\[
\begin{aligned}
 \|\widehat{\mathbf t}^m\cdot\partial_s\widehat v^m\|_{W^{1,\infty}}&\le C,\\
 \|\widehat{\mathbf t}^m\cdot\partial_s\widehat v^m\|_{H^3}
 &\le C\bigl(1+\|\widehat\kappa^m
       (\widehat{\mathbf n}^m\cdot\partial_s\widehat v^m)\|_{H^2}\bigr)\le C(1+\|\widehat v_{\mathrm n}^m\|_{H^3}+\|\widehat\kappa^m\|_{H^2}).
\end{aligned}
\]
By \eqref{time:ABCq}, \eqref{time:local-set}, and
\eqref{up:eq:velocity-uniform-bounds},
\[
 \|\partial_s\widehat v^m\|_{L^\infty}
 \le\|\widehat{\mathbf t}^m\cdot\partial_s\widehat v^m\|_{L^\infty}
    +\|\widehat{\mathbf n}^m\cdot\partial_s\widehat v^m\|_{L^\infty}
 \le C.
\]
Since $|\widehat{\mathbf t}^m|=1$, the reverse triangle inequality gives
\[
 \bigl||\widehat{\mathbf t}^m+\tau\partial_s\widehat v^m|-1\bigr|
 \le\tau|\partial_s\widehat v^m|\le C\tau.
\]
Thus
\[
 1-C\tau\le|\widehat{\mathbf t}^m+\tau\partial_s\widehat v^m|
             \le1+C\tau.
\]
For $C\tau\le1/2$, the second identity in \eqref{up:eq:metric-cancel}
and \eqref{up:eq:velocity-uniform-bounds} yield
\[
 \|\partial_\xi|\widehat{\mathbf t}^m+\tau\partial_s\widehat v^m|\|_{L^\infty}
 \le2\tau\ell_{\max}
       \|\widehat{\mathbf n}^m\cdot\partial_s\widehat v^m\|_{L^\infty}
       \|\widehat v_{\mathrm n}^m\|_{L^\infty}\le C\tau.
\]
Consequently,
\[
 \bigl\||\widehat{\mathbf t}^m+\tau\partial_s\widehat v^m|-1\bigr\|_{W^{1,\infty}}
 \le C\tau.
\]
For $f\in H^{j+1}(\Torus)$, $j=1,2$, Poincar\'e's inequality gives
\[
 \|f\|_{H^{j+1}}^2
 \le\left|\int_0^1 f\,d\xi\right|^2
       +C\|\partial_\xi f\|_{H^j}^2.
\]
Taking
$f=|\widehat{\mathbf t}^m+\tau\partial_s\widehat v^m|-1$,
its mean is $O(\tau)$ by \eqref{time:metric-estimates} and the embedding
$H^\sigma(\Torus)\hookrightarrow L^\infty(\Torus)$. Moreover,
\eqref{time:ABCq}, \eqref{time:local-set}, and
\eqref{up:eq:velocity-uniform-bounds} give uniform $L^\infty$ bounds
for $\widehat{\mathbf n}^m\cdot\partial_s\widehat v^m$ and
$\widehat v_{\mathrm n}^m$. For sufficiently small $\tau$,
\eqref{time:metric-estimates} also gives
$|\widehat{\mathbf t}^m+\tau\partial_s\widehat v^m|\ge\frac12$,
and hence a uniform $L^\infty$ bound for its reciprocal.
The second identity in \eqref{up:eq:metric-cancel} and two applications
of the product estimate therefore yield
\[
\begin{aligned}
 &\bigl\||\widehat{\mathbf t}^m+\tau\partial_s\widehat v^m|-1\bigr\|_{H^{j+1}}\\
 &\quad\le C\left|\int_0^1
   (|\widehat{\mathbf t}^m+\tau\partial_s\widehat v^m|-1)\,d\xi\right|
       +C\|\partial_\xi|\widehat{\mathbf t}^m+\tau\partial_s\widehat v^m|\|_{H^j}\\
 &\quad\le C\tau+C\tau\ell^m
 \left\|\frac{(\widehat{\mathbf n}^m\cdot\partial_s\widehat v^m)
                    \widehat v_{\mathrm n}^m}
                   {|\widehat{\mathbf t}^m+\tau\partial_s\widehat v^m|}\right\|_{H^j}\\
 &\quad\le C\tau\bigl(1
       +\|\widehat{\mathbf n}^m\cdot\partial_s\widehat v^m\|_{H^j}
       +\|\widehat v_{\mathrm n}^m\|_{H^j}
       +\bigl\||\widehat{\mathbf t}^m+\tau\partial_s\widehat v^m|^{-1}\bigr\|_{H^j}\bigr).
\end{aligned}
\]
To control the last term, the chain rule gives
\[
 \bigl\||\widehat{\mathbf t}^m+\tau\partial_s\widehat v^m|^{-1}-1\bigr\|_{H^j}
 \le C\bigl\||\widehat{\mathbf t}^m+\tau\partial_s\widehat v^m|-1\bigr\|_{H^j},
 \qquad j=1,2.
\]
Applying these inequalities first with $j=1$, then with $j=2$, gives
\begin{equation}\label{up:eq:q-high}
\begin{aligned}
 \bigl\||\widehat{\mathbf t}^m+\tau\partial_s\widehat v^m|-1\bigr\|_{H^2}
 &\le C\tau(1+\|\widehat v_{\mathrm n}^m\|_{H^2}),\\
 \bigl\||\widehat{\mathbf t}^m+\tau\partial_s\widehat v^m|-1\bigr\|_{H^3}
 &\le C\tau(1+\|\widehat v_{\mathrm n}^m\|_{H^3}+\|\widehat\kappa^m\|_{H^2}).
\end{aligned}
\end{equation}
For the factor in \eqref{time:exact-curvature}, the chain rule gives
\[
 \bigl\||\widehat{\mathbf t}^m+\tau\partial_s\widehat v^m|^{-3}-1\bigr\|_{H^3}
 \le C\bigl\||\widehat{\mathbf t}^m+\tau\partial_s\widehat v^m|-1\bigr\|_{H^3}.
\]
The decomposition
\[
\begin{aligned}
 &\frac{1+\tau\widehat{\mathbf t}^m\cdot\partial_s\widehat v^m}
       {|\widehat{\mathbf t}^m+\tau\partial_s\widehat v^m|^3}-1=|\widehat{\mathbf t}^m+\tau\partial_s\widehat v^m|^{-3}-1
       +\tau(\widehat{\mathbf t}^m\cdot\partial_s\widehat v^m)
          |\widehat{\mathbf t}^m+\tau\partial_s\widehat v^m|^{-3}
\end{aligned}
\]
together with \eqref{up:eq:q-high} and the product estimate implies
\begin{equation}\label{up:eq:D-high}
\begin{aligned}
 \left\|\frac{1+\tau\widehat{\mathbf t}^m\cdot\partial_s\widehat v^m}
 {|\widehat{\mathbf t}^m+\tau\partial_s\widehat v^m|^3}-1\right\|_{H^3} & \le C\tau(1+\|\widehat v_{\mathrm n}^m\|_{H^3}+\|\widehat\kappa^m\|_{H^2}),\\
 \left\|\frac{1+\tau\widehat{\mathbf t}^m\cdot\partial_s\widehat v^m}
	{|\widehat{\mathbf t}^m+\tau\partial_s\widehat v^m|^3}-1\right\|_{W^{1,\infty}} & \le C\tau.
\end{aligned}
\end{equation}
By \eqref{time:exact-curvature},
\[
 \widehat\kappa^{m+1}\circ\chi^{m+1}\circ(\chi^m)^{-1}
       +\widehat v_{\mathrm n}^m=-\widehat v_{\mathrm n}^m
       \left(\frac{1+\tau\widehat{\mathbf t}^m\cdot\partial_s\widehat v^m}
       {|\widehat{\mathbf t}^m+\tau\partial_s\widehat v^m|^3}-1\right).
\]
The product estimate and \eqref{up:eq:D-high} therefore give
\begin{equation}\label{up:eq:w-high}
\begin{aligned}
 \|\widehat\kappa^{m+1}\circ\chi^{m+1}\circ(\chi^m)^{-1}\|_{H^3}
 &\le(1+C\tau)\|\widehat v_{\mathrm n}^m\|_{H^3}
       +C\tau(1+\|\widehat\kappa^m\|_{H^2}),\\
 \|\widehat\kappa^{m+1}\circ\chi^{m+1}\circ(\chi^m)^{-1}\|_{W^{1,\infty}}
 &\le C.
\end{aligned}
\end{equation}
The derivative of the coordinate change is
\[
 \partial_\xi(\chi^{m+1}\circ(\chi^m)^{-1})
 =\frac{|\widehat{\mathbf t}^m+\tau\partial_s\widehat v^m|}
        {\displaystyle\int_0^1|\widehat{\mathbf t}^m+\tau\partial_s\widehat v^m|\,d\xi}.
\]
Thus \eqref{up:eq:metric-cancel} and \eqref{up:eq:q-high} yield
\[
\begin{aligned}
 &\|\partial_\xi(\chi^{m+1}\circ(\chi^m)^{-1})-1\|_{L^\infty}
 +\|\partial_\xi^2(\chi^{m+1}\circ(\chi^m)^{-1})\|_{L^\infty}
 \le C\tau,\\
 &\|\partial_\xi^3(\chi^{m+1}\circ(\chi^m)^{-1})\|_{L^2}
 \le C\tau(1+\|\widehat v_{\mathrm n}^m\|_{H^2}).
\end{aligned}
\]
Set $g=\chi^{m+1}\circ(\chi^m)^{-1}$. Since
$\partial_\xi g\ge1-C\tau\ge\frac12$, the identities obtained by
differentiating $g\circ g^{-1}=\id$ up to order three, together with a
change of variables, transfer the preceding derivative bounds to
$g^{-1}$ and give
\begin{equation}\label{up:eq:inverse}
\begin{aligned}
 &\|\partial_\xi(\chi^m\circ(\chi^{m+1})^{-1})-1\|_\infty
 +\|\partial_\xi^2(\chi^m\circ(\chi^{m+1})^{-1})\|_\infty\le C\tau,\\
 &\|\partial_\xi^3(\chi^m\circ(\chi^{m+1})^{-1})\|_{L^2}
       \le C\tau(1+\|\widehat v_{\mathrm n}^m\|_{H^2}).
\end{aligned}
\end{equation}
Now set $g=\chi^m\circ(\chi^{m+1})^{-1}$. For a smooth scalar
function $f$, the chain rule up to order three and a change of variables
give, for $0\le j\le3$,
\[
\begin{aligned}
 \|((\partial_\xi^jf)\circ g)(\partial_\xi g)^j\|_{L^2}^2
 &=\int_0^1|\partial_\xi^jf(\zeta)|^2
      [\partial_\xi g(g^{-1}(\zeta))]^{2j-1}\,d\zeta\le(1+C\tau)\|\partial_\xi^jf\|_{L^2}^2.
\end{aligned}
\]
Summing the squared derivative norms and using \eqref{up:eq:inverse} yields
\begin{equation}\label{up:eq:composition}
\begin{aligned}
 \|f\circ\chi^m\circ(\chi^{m+1})^{-1}\|_{H^3}
 &\le(1+C\tau)\|f\|_{H^3}+C\tau\|f\|_{H^2}\\
 &\qquad+C\tau(1+\|\widehat v_{\mathrm n}^m\|_{H^2})\|\partial_\xi f\|_\infty.
\end{aligned}
\end{equation}
Take $f=\widehat\kappa^{m+1}\circ\chi^{m+1}\circ(\chi^m)^{-1}$.
By \eqref{up:eq:w-high}, $\|\partial_\xi f\|_{L^\infty}\le C$.
Using $\|f\|_{H^2}\le\|f\|_{H^3}$ in
\eqref{up:eq:composition} and enlarging $C$ gives
\[
\begin{aligned}
 \|\widehat\kappa^{m+1}\|_{H^3}
 &\le(1+C\tau)\|f\|_{H^3}
       +C\tau(1+\|\widehat v_{\mathrm n}^m\|_{H^2})\\
 &\le(1+C\tau)^2\|\widehat v_{\mathrm n}^m\|_{H^3}
       +C\tau(1+\|\widehat v_{\mathrm n}^m\|_{H^2})+C\tau(1+C\tau)(1+\|\widehat\kappa^m\|_{H^2}).
\end{aligned}
\]
Since $\|\widehat v_{\mathrm n}^m\|_{H^2}
\le\|\widehat v_{\mathrm n}^m\|_{H^3}$ and $\tau\le1$, increasing $C$
once more yields
\[
\begin{aligned}
 \|\widehat\kappa^{m+1}\|_{H^3}
 &\le(1+C\tau)\|\widehat v_{\mathrm n}^m\|_{H^3}
       +C\tau(1+\|\widehat\kappa^m\|_{H^2})\\
 &\le(1+C\tau)\|\widehat v_{\mathrm n}^m\|_{H^3}
       +C\tau(1+\|\widehat\kappa^m\|_{H^3}),
\end{aligned}
\]
which is \eqref{up:eq:geo-bound}.

Combining \eqref{up:eq:near-contraction} and
\eqref{up:eq:geo-bound} gives
\[
 \|\widehat\kappa^{m+1}\|_{H^3}
 \le(1+C\tau)\|\widehat\kappa^m\|_{H^3}+C\tau.
\]
Iteration gives, for $0\le m\le M$,
\[
 \|\widehat\kappa^m\|_{H^3}
 \le e^{CT}\bigl(\|\widehat\kappa^0\|_{H^3}+CT\bigr).
\]
The estimate \eqref{up:eq:near-contraction} also applies to the
auxiliary normal velocity at $m=M$. Hence
\[
 \max_{0\le m\le M}\|\widehat v_{\mathrm n}^m\|_{H^3}
 \le(1+C\tau)\max_{0\le m\le M}\|\widehat\kappa^m\|_{H^3}\le C.
\]
\end{proof}

\subsection{First-order defects and convergence}\label{up:sec:first-order}

The $H^3$ bound improves defects to $O(\tau)$, and
Lemma~\ref{time:prefix-error} bounds the curvature and length errors.
Recovering the marked point, tangent and parametrization proves
Theorem~\ref{up:thm:time}.

\begin{proof}[Proof of Theorem~\ref{up:thm:time}]
The uniform $H^3$ bound and the resolvent equation imply
\begin{equation}\label{up:eq:improved-increments}
\begin{aligned}
 \|\widehat\kappa^m+\widehat v_{\mathrm n}^m\|_{H^1}
   &=\tau\|\Sop_{\widehat\kappa^m}^m\widehat v_{\mathrm n}^m\|_{H^1}\le C\tau,\\
 \|\widehat v_{\mathrm n}^m\|_{H^2}+\|\widehat v_{\mathrm n}^m\|_{H^{\sigma+1}}&\le C.
\end{aligned}
\end{equation}
The composition estimate in the proof of
Lemma~\ref{time:local-estimates} now gives
\begin{equation}\label{up:eq:geometric-increment}
\begin{aligned}
 &\|\widehat\kappa^{m+1}+\widehat v_{\mathrm n}^m\|_{H^\sigma}
       +\|\widehat\kappa^{m+1}+\widehat v_{\mathrm n}^m\|_{H^1}\le C\tau.
\end{aligned}
\end{equation}
The estimate for the geometric Taylor remainder in
\eqref{time:curvature-expansion}, together with
\eqref{up:eq:improved-increments}, gives
\[
 \|d_{\mathrm{geom}}^m\|_{L^2}
 \le C\tau(1+\|\widehat v_{\mathrm n}^m\|_{H^2})\le C\tau.
\]
The local Lipschitz continuity after
\eqref{time:curvature-expansion} yields
\[
 \left\|\widehat v_{\mathrm t}^m
 +\Lop_{\widehat v_{\mathrm n}^m,\ell^m}^{-1}
       \Dop_{\widehat v_{\mathrm n}^m,\ell^m}
       \widehat v_{\mathrm n}^m\right\|_{H^1}
 \le C\|\widehat\kappa^m+\widehat v_{\mathrm n}^m\|_{H^1}
 \le C\tau.
\]
Equation \eqref{time:ABCq} and the preceding tangential-velocity
estimate give
\[
\begin{aligned}
 &\left\|\widehat{\mathbf t}^m\cdot\partial_s\widehat v^m
 +(\ell^m)^{-1}\partial_\xi\left(
 \Lop_{\widehat v_{\mathrm n}^m,\ell^m}^{-1}
 \Dop_{\widehat v_{\mathrm n}^m,\ell^m}
 \widehat v_{\mathrm n}^m\right)
 +(\widehat v_{\mathrm n}^m)^2\right\|_{L^2}\\
 &\qquad\le C\|\widehat\kappa^m+\widehat v_{\mathrm n}^m\|_{H^1}
 \le C\tau.
\end{aligned}
\]
Integration and the Cauchy--Schwarz inequality give
\[
\begin{aligned}
 &\left\|\int_0^\xi\Bigg[
 (\widehat{\mathbf t}^m\cdot\partial_s\widehat v^m)(\zeta)
 +(\ell^m)^{-1}\partial_\zeta\left(
 \Lop_{\widehat v_{\mathrm n}^m,\ell^m}^{-1}
 \Dop_{\widehat v_{\mathrm n}^m,\ell^m}
 \widehat v_{\mathrm n}^m\right)(\zeta)
 +(\widehat v_{\mathrm n}^m(\zeta))^2\right.\\
 &\qquad\left.-\int_0^1\left\{
 (\widehat{\mathbf t}^m\cdot\partial_s\widehat v^m)(\eta)
 +(\ell^m)^{-1}\partial_\eta\left(
 \Lop_{\widehat v_{\mathrm n}^m,\ell^m}^{-1}
 \Dop_{\widehat v_{\mathrm n}^m,\ell^m}
 \widehat v_{\mathrm n}^m\right)(\eta)
 +(\widehat v_{\mathrm n}^m(\eta))^2
 \right\}d\eta\Bigg]d\zeta\right\|_{L^\infty}\\
 &\qquad\le C\|\widehat\kappa^m+\widehat v_{\mathrm n}^m\|_{H^1}
 \le C\tau.
\end{aligned}
\]
Substituting \eqref{time:G0} into the preceding integral and using
\eqref{time:nonlocal-cancellation} give
\[
\begin{aligned}
 &2\left[-(\ell^m)^{-1}\partial_\xi\left(
 \Lop_{\widehat v_{\mathrm n}^m,\ell^m}^{-1}
 \Dop_{\widehat v_{\mathrm n}^m,\ell^m}
 \widehat v_{\mathrm n}^m\right)
 -(\widehat v_{\mathrm n}^m)^2\right]\widehat v_{\mathrm n}^m\\
 &\quad+\left\{-(\ell^m)^{-1}\left[
 \left(\Lop_{\widehat v_{\mathrm n}^m,\ell^m}^{-1}
 \Dop_{\widehat v_{\mathrm n}^m,\ell^m}
 \widehat v_{\mathrm n}^m\right)(\xi)
 -\left(\Lop_{\widehat v_{\mathrm n}^m,\ell^m}^{-1}
 \Dop_{\widehat v_{\mathrm n}^m,\ell^m}
 \widehat v_{\mathrm n}^m\right)(0)\right]\right.\\
 &\qquad\left.-\int_0^\xi\left(
 (\widehat v_{\mathrm n}^m(\zeta))^2
 -\int_0^1(\widehat v_{\mathrm n}^m(\eta))^2\,d\eta
 \right)d\zeta\right\}\partial_\xi\widehat v_{\mathrm n}^m\\
 &\quad=-(\widehat v_{\mathrm n}^m)^3
 -\Kop_{\widehat v_{\mathrm n}^m}^m\widehat v_{\mathrm n}^m
 -\mathcal N(\widehat v_{\mathrm n}^m,\ell^m).
\end{aligned}
\]
Combining the preceding $L^2$ and $L^\infty$ estimates with the
uniform $W^{1,\infty}$ bound for $\widehat v_{\mathrm n}^m$ gives
\[
\begin{aligned}
 &\Bigg\|2(\widehat{\mathbf t}^m\cdot\partial_s\widehat v^m)
       \widehat v_{\mathrm n}^m+\left[\int_0^\xi\left(
 (\widehat{\mathbf t}^m\cdot\partial_s\widehat v^m)(\zeta)
 -\int_0^1(\widehat{\mathbf t}^m\cdot\partial_s\widehat v^m)(\eta)
       \,d\eta\right)d\zeta\right]\partial_\xi\widehat v_{\mathrm n}^m\\
 &\quad+(\widehat v_{\mathrm n}^m)^3
 +\Kop_{\widehat v_{\mathrm n}^m}^m\widehat v_{\mathrm n}^m
 +\mathcal N(\widehat v_{\mathrm n}^m,\ell^m)\Bigg\|_{L^2}\\
 &\qquad\le C\|\widehat\kappa^m+\widehat v_{\mathrm n}^m\|_{H^1}
 \le C\tau.
\end{aligned}
\]
Hence the curvature defect in \eqref{time:state-remainder} satisfies
\begin{equation}\label{up:eq:curvature-defect}
 \|d_\kappa^m\|_{L^2}
 \le C\tau(1+\|\widehat v_{\mathrm n}^m\|_{H^2})
      +C\|\widehat\kappa^m+\widehat v_{\mathrm n}^m\|_{H^1}
 \le C\tau.
\end{equation}
For the length defect, writing \eqref{time:exact-length} in the
normalized coordinate and comparing it with the second identity in
\eqref{time:scalar-diffusion} give
\[
\begin{aligned}
 d_\ell^m
 &=\ell^m\int_0^1
    (\widehat\kappa^m+\widehat v_{\mathrm n}^m)
       \widehat v_{\mathrm n}^m\,d\xi+\tau\ell^m\int_0^1
 \frac{(\widehat{\mathbf n}^m\cdot\partial_s\widehat v^m)^2}
 {|\widehat{\mathbf t}^m+\tau\partial_s\widehat v^m|
       +(1+\tau\widehat{\mathbf t}^m\cdot\partial_s\widehat v^m )}\,d\xi.
\end{aligned}
\]
For small $\tau$, the first term in the denominator is at least
$1/2$ by \eqref{time:metric-estimates}, while the second is at least
$1/2$ by \eqref{time:ABCq} and
\eqref{up:eq:velocity-uniform-bounds}. These estimates and
\eqref{up:eq:improved-increments} imply
\begin{equation}\label{up:eq:length-defect}
 |d_\ell^m|
 \le C\|\widehat\kappa^m+\widehat v_{\mathrm n}^m\|_{L^2}
          \|\widehat v_{\mathrm n}^m\|_{L^2}
      +C\tau\|\widehat{\mathbf n}^m\cdot
                    \partial_s\widehat v^m\|_{L^2}^2
 \le C\tau.
\end{equation}
Combining the normal resolvent at level $m+1$ with
\eqref{time:state-remainder}, and using the evenness of $\Sop$ in its
curvature coefficient, gives
\[
\begin{aligned}
 \frac{\widehat v_{\mathrm n}^{m+1}-\widehat v_{\mathrm n}^m}{\tau}
 +\Sop_{\widehat v_{\mathrm n}^m,\ell^m}
       \widehat v_{\mathrm n}^{m+1}
 ={}&(\widehat v_{\mathrm n}^m)^3
 +\Kop_{\widehat v_{\mathrm n}^m}^m\widehat v_{\mathrm n}^m+\mathcal N(\widehat v_{\mathrm n}^m,\ell^m)
 +\widetilde d_{\mathrm n}^m,
\end{aligned}
\]
where
\[
 \widetilde d_{\mathrm n}^m
 =-d_\kappa^m
 +\bigl(
 \Sop_{\widehat v_{\mathrm n}^m,\ell^m}
 -\Sop_{\widehat\kappa^{m+1},\ell^{m+1}}
 \bigr)\widehat v_{\mathrm n}^{m+1}.
\]
The coefficient estimates, \eqref{up:eq:improved-increments},
\eqref{up:eq:geometric-increment}, \eqref{up:eq:curvature-defect},
\eqref{up:eq:length-defect}, and \eqref{time:scalar-diffusion} imply
\[
\begin{aligned}
 \|\widetilde d_{\mathrm n}^m\|_{L^2}
 &\le \|d_\kappa^m\|_{L^2}
 +C\left(
 \|\widehat\kappa^{m+1}+\widehat v_{\mathrm n}^m\|_{H^1}
 +|\ell^{m+1}-\ell^m|
 \right)\|\widehat v_{\mathrm n}^{m+1}\|_{H^2}\le C\tau.
\end{aligned}
\]
Moreover,
\[
\begin{aligned}
 \|\widehat v_{\mathrm n}^{m+1}-\widehat v_{\mathrm n}^m\|_{L^2}
 &\le
 \|\widehat v_{\mathrm n}^{m+1}+\widehat\kappa^{m+1}\|_{L^2}
 +\|\widehat\kappa^{m+1}+\widehat v_{\mathrm n}^m\|_{L^2}\le C\tau
\end{aligned}
\]
by \eqref{up:eq:improved-increments} and
\eqref{up:eq:geometric-increment}. Since
\[
 \Sop_{\widehat v_{\mathrm n}^m,\ell^m}
 =-(\ell^m)^{-2}\partial_\xi^2
   +(\widehat v_{\mathrm n}^m)^2
   +\Kop_{\widehat v_{\mathrm n}^m}^m,
\]
comparison with \eqref{time:scalar-diffusion} gives
\[
 d_{\mathrm n}^m
 =\widetilde d_{\mathrm n}^m
 -\left((\widehat v_{\mathrm n}^m)^2
       +\Kop_{\widehat v_{\mathrm n}^m}^m\right)
  (\widehat v_{\mathrm n}^{m+1}-\widehat v_{\mathrm n}^m).
\]
The uniform operator bounds therefore yield
\begin{equation}\label{up:eq:diffusion}
 \|d_{\mathrm n}^m\|_{L^2}\le C\tau.
\end{equation}
Since $E^0=O(\tau)$, Lemma~\ref{time:prefix-error}, together with
\eqref{up:eq:length-defect} and \eqref{up:eq:diffusion}, gives
$\max_{m\le M}E^m\le C\tau$.
Recover the curvature using
\[
\begin{aligned}
 \widehat\kappa^{m+1}-\widehat\kappa(t_{m+1})=(\widehat\kappa^{m+1}+\widehat v_{\mathrm n}^m)
       -(\widehat v_{\mathrm n}^m+\widehat\kappa(t_m))+\widehat\kappa(t_m)-\widehat\kappa(t_{m+1}).
\end{aligned}
\]
Equation \eqref{up:eq:geometric-increment} proves the required
$H^\sigma$ curvature error.

It remains to compare the marked points and their unit tangents.
The position update and tangent normalization give
\begin{align}
 x^{m+1}(0)
 &=x^m(0)+\tau\bigl(v_{\mathrm n}^m(0)\mathbf n^m(0)
                    +v_{\mathrm t}^m(0)\mathbf t^m(0)\bigr),
 \label{time:point-update}\\
 \mathbf t^{m+1}(0)
 &=\frac{(\mathbf t^m+\tau\partial_s v^m)(0)}
          {|(\mathbf t^m+\tau\partial_s v^m)(0)|}.
 \label{time:angle-update}
\end{align}
For the exact marked point and its unit
tangent, \eqref{eq:exact-marked-point} and the Frenet identities give
\begin{subequations}\label{time:exact-pose}
\begin{align}
 \partial_t\widehat X(0,t)
 &=(-\widehat\kappa\widehat{\mathbf n}+\widehat\alpha\widehat{\mathbf t})(0,t),
 \label{time:exact-point}\\
 \partial_t\widehat{\mathbf t}(0,t)
 &=-(\partial_s\widehat\kappa+\widehat\kappa\widehat\alpha)(0,t)
                                  \widehat{\mathbf n}(0,t).
 \label{time:exact-angle}
\end{align}
\end{subequations}
Their initial point and unit tangent agree with $x^0(0)$ and $\mathbf t^0(0)$.

Since $\sigma>3/2$, the embedding
$H^\sigma(\Torus)\hookrightarrow C^1(\Torus)$ and the preceding
estimates give
\[
\begin{aligned}
 &\|\widehat v_{\mathrm n}^m+\widehat\kappa(t_m)\|_{C^1}
 +\|\widehat\kappa^m-\widehat\kappa(t_m)\|_{C^1}\\
 &\qquad\le C\left(
 \|\widehat v_{\mathrm n}^m+\widehat\kappa(t_m)\|_{H^\sigma}
 +\|\widehat\kappa^m-\widehat\kappa(t_m)\|_{H^\sigma}
 \right)
 \le C\tau.
\end{aligned}
\]
We claim that the map
\[
 (\eta,w,\ell)\longmapsto
 \Lop_{\eta,\ell}^{-1}\Dop_{\eta,\ell}w
\]
is locally Lipschitz into $H^{\sigma+1}$.
Since $\sigma-1>1/2$, $H^{\sigma-1}(\Torus)$ is an algebra, and
the formula in \eqref{time:LD} gives, for
$(\eta,\ell),(\widetilde\eta,\widetilde\ell)\in\mathcal U(R)$ and
$w,\widetilde w$ in a bounded subset of $H^\sigma(\Torus)$,
\[
\begin{aligned}
 \|\Dop_{\eta,\ell}w
       -\Dop_{\widetilde\eta,\widetilde\ell}\widetilde w
   \|_{H^{\sigma-1}}\le C_R\bigl(
 \|\eta-\widetilde\eta\|_{H^\sigma}
 +\|w-\widetilde w\|_{H^\sigma}
 +|\ell-\widetilde\ell|\bigr).
\end{aligned}
\]
Uniform coercivity from Lemma~\ref{time:operator-bounds} and periodic
elliptic regularity imply
$\|\Lop_{\eta,\ell}^{-1}h\|_{H^{\sigma+1}}
 \le C_R\|h\|_{H^{\sigma-1}}$.
The inverse-operator identity gives directly
\[
 \Lop_{\eta,\ell}^{-1}\Dop_{\eta,\ell}w
 -\Lop_{\widetilde\eta,\widetilde\ell}^{-1}
       \Dop_{\widetilde\eta,\widetilde\ell}\widetilde w=\Lop_{\eta,\ell}^{-1}
 \bigl(\Dop_{\eta,\ell}w
       -\Dop_{\widetilde\eta,\widetilde\ell}\widetilde w\bigr)+\Lop_{\eta,\ell}^{-1}
 \bigl(\Lop_{\widetilde\eta,\widetilde\ell}
       -\Lop_{\eta,\ell}\bigr)
 \Lop_{\widetilde\eta,\widetilde\ell}^{-1}
       \Dop_{\widetilde\eta,\widetilde\ell}\widetilde w.
\]
The preceding estimates and \eqref{time:LD} therefore yield
\[
 \|\Lop_{\eta,\ell}^{-1}\Dop_{\eta,\ell}w
 -\Lop_{\widetilde\eta,\widetilde\ell}^{-1}
       \Dop_{\widetilde\eta,\widetilde\ell}\widetilde w
 \|_{H^{\sigma+1}}
 \le C_R\bigl(
 \|\eta-\widetilde\eta\|_{H^\sigma}
 +\|w-\widetilde w\|_{H^\sigma}
 +|\ell-\widetilde\ell|\bigr).
\]
Applying this estimate to
$(\widehat\kappa^m,\widehat v_{\mathrm n}^m,\ell^m)$ and
$(\widehat\kappa(t_m),-\widehat\kappa(t_m),|\Gamma(t_m)|)$, and using
\eqref{time:normal-resolvent} and \eqref{time:alphaM}, gives
$\|\widehat v_{\mathrm t}^m-\widehat\alpha(t_m)\|_{H^{\sigma+1}}
\le C\tau$.
Since the discrete and exact arclength derivatives are
$(\ell^m)^{-1}\partial_\xi$ and
$|\Gamma(t_m)|^{-1}\partial_\xi$, respectively, the preceding
$C^1$ and length estimates, together with the product estimates, yield
\[
\begin{aligned}
 &|\widehat v_{\mathrm n}^m(0)+\widehat\kappa(0,t_m)|
 +|\widehat v_{\mathrm t}^m(0)-\widehat\alpha(0,t_m)|\\
 &\quad+\left|
 (\partial_s\widehat v_{\mathrm n}^m
       -\widehat\kappa^m\widehat v_{\mathrm t}^m)(0)
 +(\partial_s\widehat\kappa
       +\widehat\kappa\widehat\alpha)(0,t_m)
 \right|
 \le C\tau.
\end{aligned}
\]
The velocity derivative components are
uniformly bounded. Expanding the normalization in
\eqref{time:angle-update} and using \eqref{time:ABCq} yields
\[
 \mathbf t^{m+1}(0)-\mathbf t^m(0)
 =\tau(\partial_s v_{\mathrm n}^m-
                         \kappa^m v_{\mathrm t}^m)(0)\mathbf n^m(0)
   +O(\tau^2).
\]
Since $\widehat{\mathbf n}^m=J\widehat{\mathbf t}^m$ and
$\widehat{\mathbf n}=J\widehat{\mathbf t}$, the normal and tangent
errors at the marked point have the same norm.
Comparison of \eqref{time:point-update}--\eqref{time:angle-update}
with \eqref{time:exact-pose}, followed by discrete Gronwall, gives
\begin{equation}\label{time:pose-error}
 \max_{m\tau\le T}
 \bigl(|\widehat x^m(0)-\widehat X(0,t_m)|
       +|\widehat{\mathbf t}^m(0)-\widehat{\mathbf t}(0,t_m)|\bigr)
 \le C\tau.
\end{equation}

The tangent fields satisfy the Frenet equations
\begin{equation}\label{time:angle-reconstruction}
 \begin{aligned}
  \partial_\xi\widehat{\mathbf t}^m&=-\ell^m\widehat\kappa^m J\widehat{\mathbf t}^m,\\
  \partial_\xi\widehat{\mathbf t}(t_m)
    &=-|\Gamma(t_m)|\widehat\kappa(t_m)
                         J\widehat{\mathbf t}(t_m).
 \end{aligned}
\end{equation}
Subtracting these equations and applying Gronwall on $[0,1]$ bounds
the uniform tangent error by the tangent error at $\xi=0$ in
\eqref{time:pose-error} and the curvature and length errors.
The positions are recovered from
\begin{equation}\label{time:curve-reconstruction}
 \widehat x^m(\xi)=\widehat x^m(0)+\ell^m\int_0^\xi\widehat{\mathbf t}^m(\zeta)\,d\zeta.
\end{equation}
The corresponding position formula for $\widehat X(t_m)$ and the established
curvature and length errors give the $C^1$ estimate in
\eqref{up:eq:first-order}.
\end{proof}

\subsection{Higher derivatives and material coordinates}\label{reg:sec:material}

We propagate higher curvature derivatives and control coordinate
changes to supply the material regularity used in
Section~\ref{sec:consistency}.
We use the tame product estimate stated in the proof of
Lemma~\ref{up:lem:resolvent} at arbitrary fixed integer orders.
The tangential equation is the second identity in
\eqref{time:normal-resolvent}, with the operator definitions
\eqref{time:LD}. We combine it with the normal equation
\eqref{up:eq:scalar}, the
metric cancellations \eqref{up:eq:metric-cancel}, and the exact
curvature update \eqref{time:exact-curvature}.

\begin{lemma}\label{reg:lem:regularity}
For every fixed integer $r\ge3$,
\[
 \max_{m\le M}\|\widehat\kappa^m\|_{H^r}
 +\max_{m<M}(\|\widehat v_{\mathrm n}^m\|_{H^r}+\|\widehat v_{\mathrm t}^m\|_{H^{r+1}})\le C_r.
\]
For every fixed integer $r\ge1$,
\begin{equation}\label{reg:eq:material-high}
 \max_{m\le M}\|x^m\|_{W^{r+1,\infty}}
 +\max_{m<M}\biggl(
   \|v^m\|_{W^{r,\infty}}+\bigl\|| (x^m)'|v_{\mathrm n}^m\mathbf n^m\bigr\|_{W^{r,\infty}}
                  \biggr)\le C_r.
\end{equation}
Also, $0<c\le|(x^m)'|\le C$ for $0\le m\le M$.
All bounds are uniform in $m$ and sufficiently small $\tau$, with
the time-step threshold allowed to depend on $r$.
\end{lemma}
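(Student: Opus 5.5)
The plan is to induct on $r$, repeating the two-step near-contraction argument of Lemma~\ref{up:lem:resolvent} at order $r$. The case $r=3$ is Lemma~\ref{up:lem:resolvent} itself, together with \eqref{up:eq:V-high} for $\widehat v_{\mathrm t}^m$. Suppose uniform $H^{r-1}$ bounds for $\widehat\kappa^m,\widehat v_{\mathrm n}^m$ and an $H^{r}$ bound for $\widehat v_{\mathrm t}^m$ are known.

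\emph{Step one: the normal resolvent.} Apply $\partial_\xi^j$, $0\le j\le r$, to \eqref{up:eq:scalar} and test with $\partial_\xi^j\widehat v_{\mathrm n}^m$. The tame product estimate, with $L^\infty$ factors controlled by the lower-order bounds, gives the analogue of \eqref{up:eq:P-tame}:
\[
 \|(\widehat\kappa^m)^2\widehat v_{\mathrm n}^m+\Dop_{\widehat\kappa^m}^m\widehat v_{\mathrm t}^m\|_{H^{r-1}}\le C_r\bigl(1+\|\widehat\kappa^m\|_{H^r}+\|\widehat v_{\mathrm n}^m\|_{H^r}\bigr).
\]
The $H^{r}$ bound on $\widehat v_{\mathrm t}^m$ needed here comes from elliptic regularity for $\Lop\widehat v_{\mathrm t}^m=\Dop\widehat v_{\mathrm n}^m$, which gains two derivatives. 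It yields $\|\widehat v_{\mathrm t}^m\|_{H^{r+1}}\le C(1+\|\widehat\kappa^m\|_{H^r}+\|\widehat v_{\mathrm n}^m\|_{H^r})$. One integration by parts in the top-order term, absorbed into the $\tau\|\partial_\xi\widehat v_{\mathrm n}^m\|_{H^r}^2$ dissipation, then gives $\|\widehat v_{\mathrm n}^m\|_{H^r}\le(1+C\tau)\|\widehat\kappa^m\|_{H^r}+C\tau$.

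\emph{Step two: the geometric update.} Redo the metric estimates \eqref{up:eq:q-high}--\eqref{up:eq:w-high} at order $r$, using the cancellations \eqref{up:eq:metric-cancel}. These show that the top derivative of $|\widehat{\mathbf t}^m+\tau\partial_s\widehat v^m|-1$ carries a factor $\tau$ and at most $\|\widehat v_{\mathrm n}^m\|_{H^r}$. The order-$r$ composition estimate generalizing \eqref{up:eq:composition} needs Faà di Bruno, with the inverse coordinate derivatives of order $\ge2$ being $O(\tau)$ times lower-order norms. Together these give $\|\widehat\kappa^{m+1}\|_{H^r}\le(1+C\tau)\|\widehat v_{\mathrm n}^m\|_{H^r}+C\tau(1+\|\widehat\kappa^m\|_{H^r})$. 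Combining the two steps and iterating gives the uniform $H^r$ bounds, since $\widehat\kappa^0$ is smooth by Assumption~\ref{ass:exact-flow}.

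\emph{Step three: material coordinates.} The $\chi^m$ are built from $\chi^{m+1}\circ(\chi^m)^{-1}$, whose derivative is $1+\tau g^m$ after normalization, with $\|g^m\|_{W^{r,\infty}}\le C_r$ by the bounds above. Writing $(\chi^{m})'$ as a product over steps gives $\log(\chi^m)'=\sum_j\log(\ldots)\circ\chi^j$. Each summand is $O(\tau)$ in $W^{r,\infty}$, provided the $\chi^j$ are uniformly bounded in $W^{r,\infty}$. This circularity is closed by a discrete Gronwall argument on $\|\chi^m\|_{W^{r+1,\infty}}$: each step multiplies the norm by $1+C\tau$ plus lower-order terms. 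Here we use $\chi^0=\id$, and the Faà di Bruno terms are controlled inductively in $r$. This also gives $c\le(\chi^m)'\le C$, hence $0<c\le|(x^m)'|=\ell^m(\chi^m)'\le C$. Then $x^m=\widehat x^m\circ\chi^m$, where $\widehat x^m$ is reconstructed by \eqref{time:curve-reconstruction} from the Frenet system with $H^r$ curvature; similarly $v^m$ and $|(x^m)'|v_{\mathrm n}^m\mathbf n^m$ are obtained by composition and products. This gives \eqref{reg:eq:material-high}.

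The main obstacle is step three. Composition with $\chi^m$ over $M\sim\tau^{-1}$ steps must not accumulate, and this needs the per-step coordinate change to be $\id+O(\tau)$ in $W^{r+1,\infty}$, rather than merely bounded. I would attempt it by tracking $\log(\chi^m)'$ as a sum and estimating it through the Gronwall-type recursion above.
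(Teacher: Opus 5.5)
Your proposal is correct and follows essentially the same route as the paper. The paper also inducts on $r$, uses the order-$r$ energy estimate for the normal resolvent, applies the metric and Fa\`a di Bruno composition bounds for the geometric update, closes with a discrete Gronwall in time, runs a Gronwall successively in the derivative order $j$ for $\chi^m$, and then passes to material coordinates by composition.

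Two small points. At $r=3$, the bound $\|\widehat v_{\mathrm t}^m\|_{H^4}\le C$ comes from elliptic regularity with $\Dop_{\widehat\kappa^m}^m\widehat v_{\mathrm n}^m\in H^2$, not from \eqref{up:eq:V-high}, which only gives $H^3$. Also, taking square roots in the squared energy inequality without a $C\sqrt\tau$ loss requires $\|\widehat\kappa^m\|_{H^r}\ge\|\widehat\kappa^m\|_{L^2}\ge c_0$, exactly as in the paper.
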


\begin{proof}
The $r=3$ case for $\widehat\kappa^m,\widehat v_{\mathrm n}^m$ follows from
Lemma~\ref{up:lem:resolvent}. The operator definitions
\eqref{time:LD} show that the right-hand side of the tangential
equation is then uniformly bounded in $H^2$.
Uniform coercivity from Lemma~\ref{time:operator-bounds} and periodic
elliptic regularity give $\|\widehat v_{\mathrm t}^m\|_{H^4}\le C$.
Induct on $r\ge4$.
The preceding level bounds $\widehat\kappa^m,\widehat v_{\mathrm n}^m$ in $H^{r-1}$
and $\widehat v_{\mathrm t}^m$ in $H^r$. Thus
\begin{equation}\label{reg:eq:P-high}
\begin{aligned}
 &\|(\widehat\kappa^m)^2\widehat v_{\mathrm n}^m+\Dop_{\widehat\kappa^m}^m\widehat v_{\mathrm t}^m\|_{H^{r-1}}
 \le C_r(1+\|\widehat\kappa^m\|_{H^r}).
\end{aligned}
\end{equation}
Only the highest derivative of
$(\partial_\xi\widehat\kappa^m)\widehat v_{\mathrm t}^m$ requires the current curvature
norm. Repeating the calculation leading to \eqref{up:eq:high-energy},
now with the derivative sum through order $r$, and integrating the
highest-order term once by parts gives
\[
 \frac12\bigl(\|\widehat v_{\mathrm n}^m\|_{H^r}^2-\|\widehat\kappa^m\|_{H^r}^2\bigr)
       +\frac12\|\widehat\kappa^m+\widehat v_{\mathrm n}^m\|_{H^r}^2+\frac{\tau}{2(\ell^m)^2}\|\partial_\xi\widehat v_{\mathrm n}^m\|_{H^r}^2
 \le C_r\tau(1+\|\widehat\kappa^m\|_{H^r}^2+\|\widehat v_{\mathrm n}^m\|_{H^r}^2).
\]
Apply the same argument used to derive
\eqref{up:eq:near-contraction} from \eqref{up:eq:high-energy}:
move the $C_r\tau\|\widehat v_{\mathrm n}^m\|_{H^r}^2$ term to the
left, use $\|\widehat\kappa^m\|_{L^2}\ge c_0$ to absorb the constant
term, and enlarge the generic constant $C_r$. This yields
\begin{equation}\label{reg:eq:high-near}
 \|\widehat v_{\mathrm n}^m\|_{H^r}\le(1+C_r\tau)\|\widehat\kappa^m\|_{H^r}.
\end{equation}

By \eqref{time:ABCq}, the lower-level estimates bound
$\widehat{\mathbf n}^m\cdot\partial_s\widehat v^m$ in $H^{r-2}$ and give an $H^{r-1}$ bound
$C_r(1+\|\widehat v_{\mathrm n}^m\|_{H^r})$.
Moreover, \eqref{time:ABCq} and periodicity give
\[
 \int_0^1\widehat{\mathbf t}^m\cdot\partial_s\widehat v^m\,d\xi
 =\int_0^1\widehat\kappa^m\widehat v_{\mathrm n}^m\,d\xi.
\]
The right-hand side is uniformly bounded. Hence the first identity in
\eqref{up:eq:metric-cancel} and Poincar\'e's inequality give the first
two estimates below, while the second identity gives the last two:
\[
\begin{aligned}
 \|\widehat{\mathbf t}^m\cdot\partial_s\widehat v^m\|_{H^{r-1}}&\le C_r,\\
 \|\widehat{\mathbf t}^m\cdot\partial_s\widehat v^m\|_{H^r}
   &\le C_r(1+\|\widehat v_{\mathrm n}^m\|_{H^r}),\\
 \bigl\||\widehat{\mathbf t}^m+\tau\partial_s\widehat v^m|-1\bigr\|_{H^{r-1}}
   &\le C_r\tau,\\
 \bigl\||\widehat{\mathbf t}^m+\tau\partial_s\widehat v^m|-1\bigr\|_{H^r}
   &\le C_r\tau(1+\|\widehat v_{\mathrm n}^m\|_{H^r}).
\end{aligned}
\]
For the metric estimates, the induction starts from
\eqref{up:eq:q-high} and
$|\widehat{\mathbf t}^m+\tau\partial_s\widehat v^m|\ge1/2$.
At each successive order, the reciprocal estimate uses only the
already controlled lower-order metric norm.

The reciprocal and product argument used in deriving
\eqref{up:eq:D-high}, now at orders $r-1$ and $r$, turns these bounds
into the corresponding bounds for the coefficient in
\eqref{time:exact-curvature}. Applying the product estimate there gives
\[
\begin{aligned}
 \|\widehat\kappa^{m+1}\circ\chi^{m+1}\circ(\chi^m)^{-1}\|_{H^r}
   &\le(1+C_r\tau)\|\widehat v_{\mathrm n}^m\|_{H^r}+C_r\tau,\\
 \|\widehat\kappa^{m+1}\circ\chi^{m+1}\circ(\chi^m)^{-1}\|_{H^{r-1}}
   &\le C_r.
\end{aligned}
\]
The lower-order metric bound gives
$\|\chi^{m+1}\circ(\chi^m)^{-1}-\id\|_{H^r}\le C_r\tau$.
Successive differentiation of the identity obtained by composing this
map with its inverse yields
\[
 \|\partial_\xi(\chi^m\circ(\chi^{m+1})^{-1})-1\|_\infty
 +\sum_{j=2}^{r-1}\|\partial_\xi^{j}(\chi^m\circ(\chi^{m+1})^{-1})\|_\infty+\|\partial_\xi^{r}(\chi^m\circ(\chi^{m+1})^{-1})\|_{L^2}
 \le C_r\tau.
\]
For each derivative of a composition, its leading term contains the
same derivative of the outer function and only first derivatives of
the inner map. Its norm changes by a factor $1+C_r\tau$.
Every other term contains a higher derivative of the inner map.
Apply this chain rule to the curvature composition above and the
map $\chi^m\circ(\chi^{m+1})^{-1}$. The other terms are $O(\tau)$ in
$L^2$, using the uniform $H^{r-1}$ bound for
$\widehat\kappa^{m+1}\circ\chi^{m+1}\circ(\chi^m)^{-1}$.
The term with the $r$th derivative of the inner map is controlled by
the $L^\infty$ norm of the first derivative of this curvature composition.
Summing in the same derivative-sum norm gives
\[
\begin{aligned}
 \|\widehat\kappa^{m+1}\|_{H^r}
 &\le(1+C_r\tau)
 \|\widehat\kappa^{m+1}\circ\chi^{m+1}\circ(\chi^m)^{-1}\|_{H^r}
 +C_r\tau\\
 &\le(1+C_r\tau)\|\widehat\kappa^m\|_{H^r}+C_r\tau,
\end{aligned}
\]
where the last inequality uses the first estimate above and
\eqref{reg:eq:high-near}. Since $\widehat\kappa^0$ is smooth and
$M\tau=T$, discrete Gronwall gives
$\max_{m\le M}\|\widehat\kappa^m\|_{H^r}\le C_r$.
Equation~\eqref{reg:eq:high-near} then bounds
$\widehat v_{\mathrm n}^m$ in $H^r$. The right-hand side of the
tangential equation is consequently bounded in $H^{r-1}$, and periodic
elliptic regularity gives
$\|\widehat v_{\mathrm t}^m\|_{H^{r+1}}\le C_r$.
This closes the induction.

For \eqref{reg:eq:material-high}, apply the preceding estimates
through order $\max(3,r+1)$.
Sobolev embedding and \eqref{time:arclength-change} give
\[
 \|\chi^{m+1}\circ(\chi^m)^{-1}-\id\|_{W^{r+1,\infty}}
 \le C_r\tau.
\]
Using
$\chi^{m+1}=[\chi^{m+1}\circ(\chi^m)^{-1}]\circ\chi^m$,
the derivative of $\chi^m$ is a product of successive factors between
$1-C_r\tau$ and $1+C_r\tau$. Since $\chi^0=\id$ and $M\tau=T$,
this bounds $(\chi^m)'$ above and below by positive constants.
At derivative orders $2\le j\le r+1$, the chain rule and the bounds already
proved at lower orders give
\[
 \|(\chi^{m+1})^{(j)}\|_\infty
 \le(1+C_r\tau)\|(\chi^m)^{(j)}\|_\infty+C_r\tau.
\]
Discrete Gronwall, applied successively in $j$, gives uniform bounds
for the derivatives of $\chi^m$ through order $r+1$.
The positive lower bound for $(\chi^m)'$ and successive differentiation
of $\chi^m\circ(\chi^m)^{-1}=\id$ give the same bounds for
$(\chi^m)^{-1}$.

Theorem~\ref{up:thm:time} bounds $\widehat x^m$ in $L^\infty$.
Together with $\partial_\xi\widehat x^m=\ell^m\widehat{\mathbf t}^m$
and the Frenet identities \eqref{eq:orientation}, the curvature bounds
control the required $\xi$ derivatives of $\widehat x^m$ and the
normalized frame. Finally, the definitions give
\[
\begin{aligned}
 x^m&=\widehat x^m\circ\chi^m,\\
 v^m&=\bigl(\widehat v_{\mathrm n}^m\widehat{\mathbf n}^m
             +\widehat v_{\mathrm t}^m\widehat{\mathbf t}^m\bigr)
             \circ\chi^m,\\
 |(x^m)'|v_{\mathrm n}^m\mathbf n^m
 &=\ell^m(\chi^m)'
   \bigl(\widehat v_{\mathrm n}^m\widehat{\mathbf n}^m\bigr)
             \circ\chi^m,\\
 |(x^m)'|&=\ell^m(\chi^m)'.
\end{aligned}
\]
The chain rule, the length bounds, and the coordinate bounds prove
\eqref{reg:eq:material-high} and $0<c\le|(x^m)'|\le C$.
\end{proof}

\section{Consistency and linearized stability}\label{sec:consistency}

We establish the consistency and linearized stability estimates used in
the nonlinear argument of Section~\ref{sec:nonlinear}.

\subsection{Coercivity and Gauss--Lobatto consistency}

For a regular finite element input $\widetilde x_h\in V_h^2$, let
$a_{\widetilde x_h}$, $b_{\widetilde x_h,h}$, $\mu_i$, and $\omega_i$
denote the quantities in \eqref{eq:stiffness} and
\eqref{eq:nodal-mass}--\eqref{eq:normal-form} with $x_h^m$ replaced by
$\widetilde x_h$. Thus $a_{x_h^m}=a_h^m$ and
$b_{x_h^m,h}=b_h^m$. For later use, set
\[
 p_i=\sum_{K\ni i}w_{K,i}(\widetilde x_h|_K)'(\rho_i).
\]
Equations~\eqref{eq:discrete-geometry} and
\eqref{eq:averaged-normal} give $\omega_i=Jp_i/\mu_i$.
We suppress the dependence of these quantities on the input.

For a smooth regular input $x:\Torus\to\R^2$, write
$\kappa,\mathbf n,\mathbf t$ for its geometric fields.
For $u,\phi\in H^1(\Torus;\R^2)$, define
\[
 a_x(u,\phi)=\int_\T |x'|^{-1}u'\cdot\phi',\qquad
 b_x(u,\phi)=\int_\T |x'|(u\cdot \mathbf n)(\phi\cdot \mathbf n).
\]
With this notation, the step operators introduced in
\eqref{eq:step-operators} are characterized by
\eqref{eq:position-eliminated} and \eqref{time:bgn}. Set
$v=(\F_\tau(x)-x)/\tau$.
For $x=x^m$, Theorem~\ref{time:theorem} and
Lemma~\ref{reg:lem:regularity} provide the uniform coefficient bounds
used below. The normal Gram matrices
$\int_\T |(x^m)'|\mathbf n^m\otimes\mathbf n^m\,d\rho$ are uniformly
positive definite. Indeed, the normals of a regular closed curve span
$\R^2$, and compactness together with \eqref{up:eq:first-order}
transfers the corresponding lower bound from the exact flow to $x^m$.

Consequently, there is a fixed $\W$ neighborhood of the parametrizations $x^m$
on which, uniformly in small $h$ and for every $u_h\in V_h^2$,
\begin{equation}\label{un:eq:coercivity}
 a_{\widetilde x_h}(u_h,u_h)+b_{\widetilde x_h,h}(u_h,u_h)\ge c\|u_h\|_{H^1}^2.
\end{equation}
Indeed, the stiffness controls $u_h$ minus its parameter mean. The mass
controls constant vectors through the positivity of
$\sum_i\mu_i\omega_i\otimes\omega_i$. This matrix is close to the
smooth normal Gram matrix when $\widetilde x_h$ is close to
$x^m$ in $\W$ and $h$ is small. The upper bounds for the forms and
Poincar\'e's inequality complete the estimate. The same argument
applies to the continuous form. Hence
$a_{\widetilde x_h}+\tau^{-1}b_{\widetilde x_h,h}$ and
$a_x+\tau^{-1}b_x$ are uniformly coercive for $0<\tau\le1$.
Section~\ref{sec:nonlinear} proves inductively that $x_h^m$ remains in
these input neighborhoods.

For $K\in\mathcal T_h$, set $h_K=|K|$, let $I_K$ denote interpolation
at its Gauss--Lobatto nodes, and write
\[
 Q_Kg=\sum_{i\in K}w_{K,i}g(\rho_i)
\]
for the corresponding quadrature. Both operators act componentwise on
vector and matrix fields.

The following estimate compares one finite element update with the
interpolated smooth update.
\begin{lemma}\label{un:lem:defect}
For any smooth input $x$ whose velocity $v$ satisfies
\eqref{reg:eq:material-high},
\begin{equation}\label{un:eq:defect}
 \|\F_{h,\tau}(I_hx)-I_h\F_\tau(x)\|_{H^1}\le Ch^{k+1}.
\end{equation}
The constant is uniform in $h,\tau$.
\end{lemma}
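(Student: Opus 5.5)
Write $y=\F_\tau(x)$, $v=(y-x)/\tau$, $y_h=\F_{h,\tau}(I_hx)$, and set $e_h=y_h-I_hy\in V_h^2$. I would prove \eqref{un:eq:defect} by an error-equation/coercivity argument, with all the work in a superconvergent bound for the residual. By \eqref{eq:position-eliminated} with input $\widetilde x_h=I_hx$,
\[
 a_{I_hx}(e_h,\phi_h)+\tau^{-1}b_{I_hx,h}(e_h,\phi_h)
 =-a_{I_hx}(I_hy,\phi_h)-\tau^{-1}b_{I_hx,h}(I_hy-I_hx,\phi_h).
\]
Since $I_hy-I_hx=\tau I_hv$, the last term is $b_{I_hx,h}(I_hv,\phi_h)$, so no negative power of $\tau$ survives. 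Subtracting the smooth identity $a_x(y,\phi_h)+b_x(v,\phi_h)=0$ from \eqref{time:bgn} gives the residual
\[
 R(\phi_h)=\bigl[a_{I_hx}(I_hy,\phi_h)-a_x(y,\phi_h)\bigr]
 +\bigl[b_{I_hx,h}(I_hv,\phi_h)-b_x(v,\phi_h)\bigr].
\]
For small $h$, $I_hx$ lies in the $\W$ neighbourhood of \eqref{un:eq:coercivity}. For $\tau\le1$ we have $a+\tau^{-1}b\ge a+b\ge c\|\cdot\|_{H^1}^2$, so testing with $\phi_h=e_h$ reduces the lemma to the estimate $|R(\phi_h)|\le Ch^{k+1}\|\phi_h\|_{H^1}$. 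Constants are uniform because \eqref{reg:eq:material-high} bounds $x,y,v$ in $W^{r,\infty}$ for every fixed $r$.

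\emph{Stiffness part.} I would split it as
\[
 a_{I_hx}(I_hy-y,\phi_h)+\int_\T\bigl(|(I_hx)'|^{-1}-|x'|^{-1}\bigr)y'\cdot\phi_h'\,d\rho .
\]
For the first term, replace $|(I_hx)'|^{-1}$ by its elementwise mean at a cost of $O(h)$ times $\|(I_hy-y)'\|_{L^2}\le Ch^k$. Then use the Gauss--Lobatto interpolation property that $(w-I_Kw)'$ is $L^2(K)$-orthogonal to $P^{k-1}(K)$ up to $O(h_K^{k+1})$. This follows from integrating by parts with nodal endpoints and the Lobatto node structure, or equivalently from Radau/Lobatto superconvergence of the derivative, and I would check it via the Legendre expansion of $w$ on $K$. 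In the second term the coefficient difference is linear in $(I_hx-x)'$ up to $O(h^{2k})$, namely $-|x'|^{-3}x'\cdot(I_hx-x)'$. It therefore has the same structure with a smooth weight, and the same orthogonality gives $O(h^{k+1})\|\phi_h\|_{H^1}$.

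\emph{Mass part.} This is the main obstacle. Here $b_{I_hx,h}(I_hv,\phi_h)=\sum_i\mu_i(v(\rho_i)\cdot\omega_i)(\phi_h(\rho_i)\cdot\omega_i)$ uses the assembled averaged normals $\omega_i=Jp_i/\mu_i$. I would write $\mu_i\omega_i\otimes\omega_i$ as $\sum_{K\ni i}w_{K,i}|(I_hx|_K)'|\,\mathbf n_h|_K\otimes\mathbf n_h|_K$ minus a covariance term. That term is $\mu_i^{-1}$ times the weighted covariance of the one-sided values $J(I_hx|_K)'(\rho_i)$. The one-sided derivatives of $I_hx$ at a node differ by $O(h^k)$, so this covariance term is $O(h^{2k})\le O(h^{k+1})$ for $k\ge1$. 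This is the exact covariance identity I expect to have to write out carefully. The remaining elementwise sum is $\sum_KQ_K\bigl[(J(I_hx)'\cdot v)(\phi_h\cdot\mathbf n_h)\bigr]$. I would compare it with $\int|x'|(v\cdot\mathbf n)(\phi_h\cdot\mathbf n)$ in two steps. First, replace $(I_hx)'$ by $x'$ at the nodes, using the zero-mean/orthogonality cancellation of $(I_hx-x)'$ under $Q_K$ against smooth times $P^{k}$ data. Second, pass from $Q_K$ to the exact integral using the degree-$(2k-1)$ exactness together with a Bramble--Hilbert argument on the smooth factor, which gives $O(h^{k+1})\|\phi_h\|_{H^1}$ after an inverse estimate only on lower-order terms.

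The delicate point in the mass part is that $\phi_h$ and $\mathbf n_h$ are only piecewise smooth. Each cancellation must therefore be applied elementwise with the smooth factor frozen at degree $\le k-1$ before invoking quadrature exactness.
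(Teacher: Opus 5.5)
Your proposal is correct and takes essentially the same route as the paper. Both arguments use the $\tau$-free error equation with coercivity. For the stiffness, both use Gauss--Lobatto flux orthogonality with frozen coefficients, including the linearized metric term. For the mass, both use the exact covariance identity, then linearize $p\mapsto (Jp\otimes Jp)/|p|$ at $x'$ with a $Q_K$ cancellation, and finish with a quadrature-exactness load estimate.

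One point to state more carefully: in the mass step, use only the zero mean of $Q_K\partial_\rho(I_Kx-x)$, after subtracting an endpoint value of the smooth factor times $\phi_h$, as the paper does in \eqref{un:eq:GLforce}. The Gauss--Lobatto rule is not exact on products in $P^k\cdot P^k$, so orthogonality against $P^k$ data under $Q_K$ does not literally hold.
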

\begin{proof}
Let $\Pi_{k-1}$ be the unweighted
$L^2(K)$ projection onto $P^{k-1}(K)$, applied componentwise to vectors.
For a smooth scalar or vector function $f$ on $K$, interpolation
gives $\|(I_Kf-f)'\|_\infty\le Ch_K^k$, while the
Gauss--Lobatto nodes give
\begin{equation}\label{un:eq:GL}
 \|\Pi_{k-1}\partial_\rho(I_Kf-f)\|_{\infty,K}\le Ch_K^{k+1},
 \qquad |Q_K\partial_\rho(I_Kf-f)|\le Ch_K^{k+2},
\end{equation}
To prove this, let $T_{k+1}f$ be the degree $k+1$ Taylor polynomial of
$f$ on $K$. On the reference interval $[-1,1]$, the Gauss--Lobatto
nodal polynomial satisfies
\[
 \prod_{j=0}^{k}(\zeta-\zeta_j)
 =c_k(1-\zeta^2)\partial_\zeta P_k(\zeta),
 \qquad
 \partial_\zeta\bigl[(1-\zeta^2)\partial_\zeta P_k\bigr]
 =-k(k+1)P_k.
\]
After scaling back to $K$, it follows that
\[
 \begin{aligned}
 \partial_\rho(I_Kf-f)
 &=\partial_\rho(I_KT_{k+1}f-T_{k+1}f)
   +\partial_\rho\bigl[I_K(f-T_{k+1}f)-(f-T_{k+1}f)\bigr]\\
 &=c_{K,f}P_k\circ F_K^{-1}+r_K.
 \end{aligned}
\]
Here $F_K:[-1,1]\to K$ is the affine element map and
\[
 \|r_K\|_{\infty,K}
 \le Ch_K^{k+1}\|f\|_{W^{k+2,\infty}(K)}.
\]
Since
\[
 \Pi_{k-1}(P_k\circ F_K^{-1})=0,
 \qquad
 Q_K(P_k\circ F_K^{-1})
 =\int_KP_k\circ F_K^{-1}=0,
\]
fixed-degree projection stability and $\sum_{i\in K}w_{K,i}=h_K$ give
\[
 \|\Pi_{k-1}\partial_\rho(I_Kf-f)\|_{\infty,K}
 \le C\|r_K\|_{\infty,K},
 \qquad
 |Q_K\partial_\rho(I_Kf-f)|
 \le h_K\|r_K\|_{\infty,K}.
\]
This proves \eqref{un:eq:GL}, including $k=1$.

For a smooth vector function $f:K\to\R^2$, a smooth matrix
coefficient $B:K\to\R^{2\times2}$ and a test $\phi_h\in V_h^2$,
freezing $B$ elementwise therefore gives
\begin{align}
 \left|\int_K B\partial_\rho(I_Kf-f)\cdot\phi_h'\right|
 &\le Ch_K^{k+1}\|\phi_h'\|_{L^1(K)},\label{un:eq:GLflux}\\
 |Q_K(B\partial_\rho(I_Kf-f)\cdot\phi_h)|
 &\le Ch_K^{k+1}
       (\|\phi_h\|_{L^1(K)}+\|\phi_h'\|_{L^1(K)}).
 \label{un:eq:GLforce}
\end{align}
For the second estimate, subtract $B(a)\phi_h(a)$ at an endpoint
and use \eqref{un:eq:GL}, coefficient oscillation $O(h_K)$ and
$\|\phi_h-\phi_h(a)\|_\infty\le\|\phi_h'\|_{L^1(K)}$.
The endpoint value is controlled by
$h_K|\phi_h(a)|\le C(\|\phi_h\|_{L^1(K)}
+h_K\|\phi_h'\|_{L^1(K)})$.
For a smooth scalar load $f$ and a scalar test $\phi_h\in V_h$,
the same splitting of the test, followed by
Taylor subtraction of degree $k$ in its constant part and degree
$k-1$ in its oscillating part, gives
\begin{equation}\label{un:eq:GLload}
 |Q_K(f\phi_h)-\int_K f\phi_h|
 \le Ch_K^{k+1}(\|\phi_h\|_{L^1(K)}+\|\phi_h'\|_{L^1(K)}).
\end{equation}
All subtracted products are within the degree $2k-1$ exactness range.
This scalar load estimate is applied componentwise below.

Return to vector tests $\phi_h\in V_h^2$. Taylor expansion on each
element gives
\[
 \frac{[I_K\F_\tau(x)]'}{|(I_Kx)'|}
       -\frac{[\F_\tau(x)]'}{|x'|}=\frac{\partial_\rho[I_K\F_\tau(x)-\F_\tau(x)]}{|x'|}-\frac{[\F_\tau(x)]'}{|x'|^2}
           \big[\mathbf t\cdot\partial_\rho(I_Kx-x)\big]
           +O(h^{2k}).
\]
Test with $\phi_h'$ and apply \eqref{un:eq:GLflux}. Since $2k\ge k+1$,
the stiffness residual is bounded by $Ch^{k+1}\|\phi_h\|_{H^1}$.

For the mass first consider separate element normals. Expand the
smooth matrix-valued function $p\mapsto(Jp\otimes Jp)/|p|$ at $p=x'$.
Equations \eqref{un:eq:GLforce}--\eqref{un:eq:GLload} show that
\[
 \left|\sum_K Q_K\!\left(
 \frac{J(I_hx)'\otimes J(I_hx)'}{|(I_hx)'|}
 v\cdot\phi_h\right)-b_x(v,\phi_h)\right|
 \le Ch^{k+1}\|\phi_h\|_{H^1}.
\]
Here $I_hv=v$ at the quadrature nodes. At a global node, set
\[
 \mathbf{n}_{K,i}=
 \frac{J(I_hx|_K)'(\rho_i)}{|(I_hx|_K)'(\rho_i)|}.
\]
With $\mu_i$ and $\omega_i$ evaluated at $\widetilde x_h=I_hx$, the assembled
mass differs from this intermediate form by the exact covariance matrix
\begin{equation}\label{un:eq:covariance}
\begin{aligned}
 &\sum_{K\ni i}w_{K,i}|(I_hx|_K)'(\rho_i)|
       \mathbf n_{K,i}\otimes\mathbf n_{K,i}
       -\mu_i\omega_i\otimes\omega_i\\
 &\quad=\sum_{K\ni i}w_{K,i}|(I_hx|_K)'(\rho_i)|
       (\mathbf n_{K,i}-\omega_i)\otimes(\mathbf n_{K,i}-\omega_i).
\end{aligned}
\end{equation}
Every incident normal is $O(h^k)$ from the same smooth normal at
that node. Hence \eqref{un:eq:covariance} is $O(\mu_i h^{2k})$.
Positive-weight norm equivalence bounds its contribution by
$Ch^{2k}\|\phi_h\|_{H^1}$.

The time-semidiscrete equation now gives
\[
 |a_{I_hx}(I_h\F_\tau(x),\phi_h)+b_{I_hx,h}(I_hv,\phi_h)|
 \le Ch^{k+1}\|\phi_h\|_{H^1}.
\]
Subtract \eqref{eq:position-eliminated} and use \eqref{un:eq:coercivity}. This proves
\eqref{un:eq:defect} without a time-step loss, because $v$
has already been bounded uniformly.
\end{proof}

\subsection{The linearized time-semidiscrete step}

An adapted norm for the coupled normal and tangential equations gives
the stability factor $1+C\tau$ needed for iteration.

Fix a smooth input $x$. Within this subsection, scalar
Sobolev norms use arclength derivatives and measure, and $L^p$ norms
use $ds$. Scalar dual norms use the arclength pairing.
Use the normal and tangential velocity components introduced in
Section~\ref{sec:reference-maps}. We retain the notation
$\Lop_\kappa,\Dop_\kappa,\Kop_\kappa,\Sop_\kappa$ from
\eqref{time:LD}--\eqref{time:KS} for their arclength-coordinate
realizations associated with the input geometry $x$. When two inputs are
compared, their curvature subscripts distinguish the operators.
Lemma~\ref{time:operator-bounds}, elliptic regularity, and
Lemma~\ref{reg:lem:regularity} give the required uniform bounds
$\Lop_\kappa^{-1}\Dop_\kappa:H^1\to H^2$ and
$\kappa^2+\Kop_\kappa:H^1\to H^1$.

For $e\in H^1(\Torus;\R^2)$, define the adapted norm associated with $x$ by
\begin{equation}\label{un:eq:norm}
 \|e\|_x=\|e\cdot\mathbf n\|_{H^1(ds)}
       +\|e\cdot\mathbf t-\Lop_\kappa^{-1}\Dop_\kappa
                    (e\cdot\mathbf n)\|_{H^1(ds)}.
\end{equation}
The operators and the norm are evaluated at the indicated input $x$.
In particular, $\|\cdot\|_{x^j}$ uses the frame, arclength measure and
operators $\Lop_{\kappa^j}$ and $\Dop_{\kappa^j}$ of $x^j$,
$0\le j\le M$. The uniform operator and
material metric bounds make \eqref{un:eq:norm} uniformly equivalent to the vector
$H^1(\Torus)$ norm.

\begin{proposition}[Stability of the linearized time-semidiscrete step]
\label{un:prop:continuous-stability}
For the time-semidiscrete solution $x^m$ in Section~\ref{sec:upgrade},
$0\le m<M$, and sufficiently small $\tau$, the linearized update
extends from smooth directions to $H^1(\Torus;\R^2)$ and satisfies
\begin{equation}\label{un:eq:continuous-stability}
 \bigl\|D\F_\tau(x^m)e\bigr\|_{x^{m+1}}
 \le(1+C\tau)\|e\|_{x^m}
 \qquad\text{for all }e\in H^1(\Torus;\R^2).
\end{equation}
The constant is independent of $m$ and $\tau$.
\end{proposition}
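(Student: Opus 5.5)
We differentiate the strong form of the update. Write $y=\F_\tau(x)$ and $v=(y-x)/\tau$. By \eqref{time:strong-bgn}, the step is characterized by
\[
 \partial_s^2y=\tau^{-1}\bigl((y-x)\cdot\mathbf n\bigr)\mathbf n,
\]
where $\partial_s=|x'|^{-1}\partial_\rho$ and $\mathbf n$ belong to the input $x$. Differentiate this identity in the direction $e$. The output perturbation $\dot y=D\F_\tau(x^m)e$ then solves
\[
 a_{x}(\dot y,\phi)+\tau^{-1}b_{x}(\dot y-e,\phi)=-a_x'[e](y,\phi)-\tau^{-1}b_x'[e](y-x,\phi).
\]
The left-hand side is uniformly coercive on $H^1$. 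This gives existence, uniqueness and the extension from smooth directions to $H^1$, but only with a bound $C\|e\|_{H^1}$. That is not good enough. The proof must extract the factor $1+C\tau$ from the structure.

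We therefore write $\dot y=e+\tau\dot v$ and decompose everything in the frame of $x^m$. Set $e_{\mathrm n}=e\cdot\mathbf n^m$ and $e_{\mathrm t}=e\cdot\mathbf t^m$, and define $\dot v_{\mathrm n},\dot v_{\mathrm t}$ in the same way. Repeating the computation in the proof of Lemma~\ref{time:exact-step} gives the two component equations
\begin{align*}
 (I+\tau\Lop_\kappa)\dot v_{\mathrm n}+\tau\Dop_\kappa\dot v_{\mathrm t}&=\partial_s^2e_{\mathrm n}+\dots,\\
 \Lop_\kappa\dot v_{\mathrm t}-\Dop_\kappa\dot v_{\mathrm n}&=\dots
\end{align*}
The right-hand sides involve $\Lop_\kappa e_{\mathrm n}$, $\Dop_\kappa e_{\mathrm t}$, and lower-order terms from varying the metric and normal. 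Those lower-order terms carry the smooth coefficients $v^m$ and $\kappa^m$, which Lemma~\ref{reg:lem:regularity} bounds uniformly.

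The key observation is that the principal part of the linearized $-\partial_s^2 e$ pairs as
\[
 \Lop_\kappa e_{\mathrm n}+\Dop_\kappa e_{\mathrm t}\ \text{(normal)},\qquad \Lop_\kappa e_{\mathrm t}-\Dop_\kappa e_{\mathrm n}\ \text{(tangential)}.
\]
Put $w=e_{\mathrm t}-\Lop_\kappa^{-1}\Dop_\kappa e_{\mathrm n}$, so that the tangential part becomes $\Lop_\kappa w$. Elimination then gives two relations. First, $\dot v_{\mathrm t}$ equals $\Lop_\kappa^{-1}\Dop_\kappa\dot v_{\mathrm n}$ plus a term of the form $-\tau^{-1}w$ plus $O(\|e\|)$. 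Second,
\[
 \dot v_{\mathrm n}=-(I+\tau\Sop_\kappa)^{-1}\bigl(\Sop_\kappa e_{\mathrm n}+\Dop_\kappa w\bigr)+O_{H^1}(\|e\|_{x^m}),
\]
which mirrors \eqref{time:normal-resolvent}.

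Consequently the new normal component is
\[
 e_{\mathrm n}+\tau\dot v_{\mathrm n}=(I+\tau\Sop_\kappa)^{-1}e_{\mathrm n}-\tau(I+\tau\Sop_\kappa)^{-1}\Dop_\kappa w+O(\tau\|e\|).
\]
Since $\Sop_\kappa\ge0$ (Lemma~\ref{time:operator-bounds}), and since $(I+\tau\Sop_\kappa)^{-1}$ commutes with $(I+\Sop_\kappa)^{1/2}$, whose graph norm is equivalent to $H^1$, the first term contracts in a norm equivalent to $H^1(ds)$. For the coupling term we use $\|\tau(I+\tau\Sop_\kappa)^{-1}\Dop_\kappa w\|_{H^1}\le C\tau\|w\|_{H^1}$, which holds because $\Dop_\kappa:H^1\to L^2$ and $\tau(I+\tau\Sop)^{-1}$ gains the needed derivative up to $\tau^{1/2}$ only.

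This coupling term is the expected main obstacle. The naive bound gives $C\tau^{1/2}$. I would first try to remove it by pairing $\Dop_\kappa w$ with the normal energy and using $\Dop^*=-\Dop$. The resulting cross term should cancel against the matching term in the tangential energy, exactly as the two squares in \eqref{time:S-form} combine. To exploit this, I would estimate the sum of squared adapted norms rather than each piece separately.

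For the new tangential quantity, the relation for $\dot v_{\mathrm t}$ shows that the $\tau^{-1}w$ contribution essentially reproduces $w$ after multiplication by $\tau$. More precisely, the new $w$ equals $w$ plus $\tau$ times bounded operators applied to $(e_{\mathrm n},w)$; here I expect to need $\Lop_\kappa^{-1}$ smoothing together with the $H^1\to H^2$ bounds from Lemma~\ref{reg:lem:regularity}.

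It then remains to pass from the frame, arclength and operators of $x^m$ to those of $x^{m+1}$ in $\|\cdot\|_{x^{m+1}}$. By \eqref{time:metric-estimates}, Theorem~\ref{up:thm:time} and Lemma~\ref{time:operator-bounds} \eqref{time:K-lipschitz}, and since $x^{m+1}-x^m=\tau v^m$ is smooth, this change costs only a factor $1+C\tau$. Collecting the estimates yields \eqref{un:eq:continuous-stability}.
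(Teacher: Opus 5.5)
There is a genuine gap. Your skeleton matches the paper's: write $D\F_\tau(x^m)e=e+\tau\dot v$, resolve in the frame of $x^m$, eliminate $\dot v_{\mathrm t}$ through $\Lop_\kappa^{-1}\Dop_\kappa$, measure $w=e_{\mathrm t}-\Lop_\kappa^{-1}\Dop_\kappa e_{\mathrm n}$, then compare the norms at $x^m$ and $x^{m+1}$. But the component equations you derive are wrong, because you treat the geometry variations $a_x'[e]$ and $b_x'[e]$ as lower order.

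They are not lower order. $a_x'[e](x,\phi)=-\int(\mathbf t\cdot\partial_s e)\,\mathbf t\cdot\partial_s\phi\,ds$ has the same order as $a_x(e,\phi)$. For $\phi=\psi\mathbf t$ one has $a_x(e,\phi)+a_x'[e](x,\phi)=-\int\kappa(\partial_s e\cdot\mathbf n)\psi\,ds$, so the term $\Lop_\kappa w$ you keep is cancelled up to a bounded remainder. The normal rotation in $b_x'[e](v,\phi)$ contributes $-\int v_{\mathrm n}(\partial_s e\cdot\mathbf n)\psi\,ds$. The normal component of the strong equation gives $\kappa+v_{\mathrm n}=\tau\,\partial_s^2v\cdot\mathbf n$, so the sum is $O(\tau)$. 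Hence $\Lop_\kappa\dot v_{\mathrm t}-\Dop_\kappa\dot v_{\mathrm n}=R_xe$ with $R_x:H^1\to H^{-1}$ uniformly bounded, and there is no $-\tau^{-1}w$ term.

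Your relation would make the new tangential quantity $O(\tau\|e\|)$, which contradicts your own later claim that it equals $w+O(\tau)$. Reparametrization equivariance, $\F_\tau(x\circ\varphi)=\F_\tau(x)\circ\varphi$, gives a clean check. For $e=f\mathbf t$ one has exactly $D\F_\tau(x)e=f(\mathbf t+\tau\partial_s v)$. So $\dot v_{\mathrm t}=f\,\mathbf t\cdot\partial_s v$ is bounded, and the normal output $\tau f\,\mathbf n\cdot\partial_s v$ is $O(\tau\|f\|_{H^1})$ in $H^1$. Your normal formula instead predicts $-\tau(I+\tau\Sop_\kappa)^{-1}\Dop_\kappa f$, which has size $\tau^{1/2}\|f\|_{H^1}$ at frequencies near $\tau^{-1/2}$.

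In normal testing, the same two variations reduce the coefficient of $\partial_s e_{\mathrm t}$ from $2\kappa$ (in $\Dop_\kappa e_{\mathrm t}$) to $\kappa+v_{\mathrm n}=O(\tau)$. At order $\tau$ the only $e_{\mathrm t}$-term is therefore the zeroth-order $-\tau(\partial_s\kappa+\kappa v_{\mathrm t})e_{\mathrm t}$. The rest is $\tau^2J_xe$ with $J_x:H^1\to H^{-1}$, controlled by $\|(I+\tau\Sop_\kappa)^{-1}\|_{H^{-1}\to H^1}\le C/\tau$. So the coupling $\tau(I+\tau\Sop_\kappa)^{-1}\Dop_\kappa w$ you single out as the main obstacle does not exist, and neither does the need for the energy cancellation you propose for it.

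Conversely, you miss the term that genuinely loses a derivative. The normal rotation in $b_x'[e](v,\cdot)$ puts the transport $\tau v_{\mathrm t}\partial_s e_{\mathrm n}$ into the normal forcing. Since $\tau(I+\tau\Sop_\kappa)^{-1}(v_{\mathrm t}\partial_s\,\cdot)$ is only $O(\tau^{1/2})$ on $H^1$, this term cannot sit in your $O_{H^1}(\|e\|_{x^m})$ remainder. The paper handles it with the drift estimate $\|(I+\tau\Sop_\kappa)^{-1}(f+\tau v_{\mathrm t}\partial_s f)\|_{H^1}\le(1+C\tau)\|f\|_{H^1}$, proved by testing the heat resolvent and integrating the transport by parts.

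Finally, contraction of $(I+\tau\Sop_\kappa)^{-1}$ in an $H^1$-equivalent graph norm does not by itself give the factor $1+C\tau$ in the $H^1(ds)$ norm that defines $\|\cdot\|_x$. The paper proves $\|(I+\tau\Sop_\kappa)^{-1}\|_{H^1\to H^1}\le1+C\tau$ directly from the Fourier multiplier of $-\partial_s^2$ and a bounded perturbation.
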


\begin{proof}
Taking the normal component of \eqref{time:strong-bgn}, with $x^m$
replaced by $x$, and using \eqref{eq:orientation} gives
\begin{equation}\label{un:eq:cancel}
 \kappa+v_{\mathrm n}=\tau(\partial_s^2v\cdot\mathbf n).
\end{equation}
The velocity, curvature, and their needed derivatives are uniformly
bounded by \eqref{reg:eq:material-high}.

For a smooth vector direction $e:\Torus\to\R^2$, set
$u=D\F_\tau(x)[e]$. In this subsection $w=(u-e)/\tau$ denotes
the corresponding derivative of the vector velocity.
For $f,\phi\in H^1(\Torus;\R^2)$ held fixed, define the geometry
derivatives by
\[
\begin{aligned}
 a'_x[e](f,\phi)&=\left.\frac{d}{d\varepsilon}
       a_{x+\varepsilon e}(f,\phi)\right|_{\varepsilon=0},\\
 b'_x[e](f,\phi)&=\left.\frac{d}{d\varepsilon}
       b_{x+\varepsilon e}(f,\phi)\right|_{\varepsilon=0}.
\end{aligned}
\]
For finite element geometries and a direction $e_h\in V_h^2$,
$a'_{\widetilde x_h}[e_h]$ and $b'_{\widetilde x_h,h}[e_h]$ are
defined by the same derivatives of $a_{\widetilde x_h+\varepsilon e_h}$ and
$b_{\widetilde x_h+\varepsilon e_h,h}$.
Differentiating in the fixed material parameter and using the
arclength derivative and measure of $x$ gives
\begin{equation}\label{un:eq:formderiv}
\begin{aligned}
 a'_x[e](\F_\tau(x),\phi)
 &=-\int(\mathbf t\cdot\partial_s e)\partial_s\F_\tau(x)\cdot\partial_s\phi,\\
 b'_x[e](v,\phi)
 &=\int v_{\mathrm n}
       \big[(\mathbf t\cdot\partial_s e)(\phi\cdot\mathbf n)
          -(\mathbf n\cdot\partial_s e)(\phi\cdot\mathbf t)\big]-\int v_{\mathrm t}(\mathbf n\cdot\partial_s e)(\phi\cdot\mathbf n).
\end{aligned}
\end{equation}
The linearized step therefore satisfies, for every
$\phi\in H^1(\Torus;\R^2)$,
\begin{equation}\label{un:eq:linearized}
 b_x(w,\phi)+a_x(u,\phi)
       +a'_x[e](\F_\tau(x),\phi)+b'_x[e](v,\phi)=0.
\end{equation}
Resolve the input direction and velocity derivative in the frame
$(\mathbf n,\mathbf t)$ of $x$:
\[
\begin{aligned}
 e_{\mathrm n}=e\cdot\mathbf n,\qquad &e_{\mathrm t}=e\cdot\mathbf t,\\
 w_{\mathrm n}=w\cdot\mathbf n,\qquad &w_{\mathrm t}=w\cdot\mathbf t.
\end{aligned}
\]
Thus $\partial_s e\cdot\mathbf t=\partial_s e_{\mathrm t}+\kappa e_{\mathrm n}$ and
$\partial_s e\cdot\mathbf n=\partial_s e_{\mathrm n}-\kappa e_{\mathrm t}$.
Define the scalar distribution $R_xe$ by its action on every
$\psi\in H^1(\Torus)$:
\[
 \begin{aligned}
 \langle R_xe,\psi\rangle
 ={}&\int(\partial_s^2v\cdot\mathbf n)(\partial_s e\cdot\mathbf n)\psi+\int(\partial_s e\cdot\mathbf t)
       \big[(\partial_s v\cdot\mathbf t)\partial_s\psi
             -\kappa(\partial_s v\cdot\mathbf n)\psi\big].
\end{aligned}
\]
Tangential testing in \eqref{un:eq:linearized} and cancellation
\eqref{un:eq:cancel} give
\begin{equation}\label{un:eq:tangent}
 \Lop_\kappa w_{\mathrm t}-\Dop_\kappa w_{\mathrm n}=R_xe.
\end{equation}
In detail, the terms from the normal stiffness at $x$ and the mass
variation are $-\kappa(\partial_s e\cdot\mathbf n)\psi$ and
$-v_{\mathrm n}(\partial_s e\cdot\mathbf n)\psi$. Their sum is
$-\tau(\partial_s^2v\cdot\mathbf n)(\partial_s e\cdot\mathbf n)\psi$. Division by $\tau$ therefore leaves a bounded
coefficient. Since $R_x:H^1(\Torus;\R^2)\to H^{-1}(\Torus)$
is uniformly bounded, so is
$\Lop_\kappa^{-1}R_x:H^1(\Torus;\R^2)\to H^1(\Torus)$, and
\begin{equation}\label{un:eq:eta}
 w_{\mathrm t}=\Lop_\kappa^{-1}\Dop_\kappa w_{\mathrm n}
                 +\Lop_\kappa^{-1}R_xe.
\end{equation}
For the output direction $u=D\F_\tau(x)[e]$, likewise define
$u_{\mathrm n}=u\cdot\mathbf n$ and $u_{\mathrm t}=u\cdot\mathbf t$.
Set
\begin{equation}\label{un:eq:J}
\begin{aligned}
 J_xe={}&-(\partial_s^2v\cdot\mathbf n)\partial_s e_{\mathrm t}
          -\partial_s\big[(\partial_s v\cdot\mathbf n)(\partial_s e\cdot\mathbf t)\big]
          +\kappa(\partial_s v\cdot\mathbf t)(\partial_s e\cdot\mathbf t)
          -\Dop_\kappa\Lop_\kappa^{-1}R_xe.
\end{aligned}
\end{equation}
Normal testing, followed by $u=e+\tau w$ and \eqref{un:eq:eta}, gives
\begin{equation}\label{un:eq:normal}
\begin{aligned}
 (I+\tau\Sop_\kappa)u_{\mathrm n}
 ={}&e_{\mathrm n}+\tau\big[(\kappa^2+\Kop_\kappa)e_{\mathrm n}
                         +v_{\mathrm t}\partial_s e_{\mathrm n}
                         -\kappa v_{\mathrm n}e_{\mathrm n}\big]-\tau(\partial_s\kappa+\kappa v_{\mathrm t})e_{\mathrm t}+\tau^2J_xe.
\end{aligned}
\end{equation}
Formula \eqref{un:eq:J} defines a uniformly bounded map
$H^1(\Torus;\R^2)\to H^{-1}(\Torus)$.
Using $u=e+\tau w$, equation \eqref{un:eq:eta} becomes
\begin{equation}\label{un:eq:gamma}
 u_{\mathrm t}-\Lop_\kappa^{-1}\Dop_\kappa u_{\mathrm n}
 =e_{\mathrm t}-\Lop_\kappa^{-1}\Dop_\kappa e_{\mathrm n}
      +\tau\Lop_\kappa^{-1}R_xe.
\end{equation}

For $f\in H^1(\Torus)$ and small $\tau$, we prove
\begin{gather}
 \|(I+\tau\Sop_\kappa)^{-1}\|_{H^1\to H^1}\le1+C\tau,
 \qquad
 \|(I+\tau\Sop_\kappa)^{-1}\|_{H^{-1}\to H^1}\le C/\tau,
 \label{un:eq:resolvent}\\
 \|(I+\tau\Sop_\kappa)^{-1}(f+\tau v_{\mathrm t}\partial_s f)\|_{H^1}
 \le(1+C\tau)\|f\|_{H^1}.\label{un:eq:drift}
\end{gather}
For $-\partial_s^2$ the first two estimates follow from its
periodic Fourier multipliers. The bounded perturbation
$\kappa^2+\Kop_\kappa$ is
absorbed using the heat-resolvent identity.

For \eqref{un:eq:drift}, first solve
$g-\tau\partial_s^2g=f+\tau v_{\mathrm t}\partial_s f$. Test with $g$ and use
polarization. Splitting $v_{\mathrm t}\partial_s f=v_{\mathrm t}\partial_s g+v_{\mathrm t}\partial_s(f-g)$ and integrating by
parts gives
\[
 \tfrac12(\|g\|_2^2-\|f\|_2^2+\|g-f\|_2^2)
 +\tau\|\partial_s g\|_2^2
 \le C\tau\|g\|_2^2
 +C\tau\|g-f\|_2(\|\partial_s g\|_2+\|g\|_2).
\]
Young's inequality absorbs part of $\|g-f\|_2^2$ and, for small
$\tau$, the resulting $C\tau^2\|\partial_s g\|_2^2$, and yields
\[
 (1-C\tau)\|g\|_2^2
 +\tfrac12\|g-f\|_2^2
 +\tau\|\partial_sg\|_2^2
 \le \|f\|_2^2.
\]
Hence $\|g\|_2\le(1+C\tau)\|f\|_2$. For smooth $f$, let $g_1$
solve the same equation with $f$ replaced by $\partial_sf$. Then
\[
\begin{aligned}
 (I-\tau\partial_s^2)\partial_sg
 &=\partial_sf+\tau v_{\mathrm t}\partial_s^2f
      +\tau(\partial_sv_{\mathrm t})\partial_sf,\\
 \partial_sg
 &=g_1+\tau(I-\tau\partial_s^2)^{-1}
          \bigl((\partial_sv_{\mathrm t})\partial_sf\bigr),
\end{aligned}
\]
and the $L^2$ contraction of the heat resolvent gives
\[
 \|\partial_sg\|_2
 \le(1+C\tau)\|\partial_sf\|_2.
\]
The two squared estimates, followed by density, show that
\[
 \|(I-\tau\partial_s^2)^{-1}
       (f+\tau v_{\mathrm t}\partial_sf)\|_{H^1}
 \le(1+C\tau)\|f\|_{H^1}.
\]
If
$\widetilde g=(I+\tau\Sop_\kappa)^{-1}
 (f+\tau v_{\mathrm t}\partial_sf)$, then
\[
 \widetilde g
 =(I-\tau\partial_s^2)^{-1}(f+\tau v_{\mathrm t}\partial_sf)
 -\tau(I-\tau\partial_s^2)^{-1}
       (\kappa^2+\Kop_\kappa)\widetilde g.
\]
Consequently,
\[
 \|\widetilde g\|_{H^1}
 \le(1+C\tau)\|f\|_{H^1}
      +C\tau\|\widetilde g\|_{H^1},
\]
and absorption proves \eqref{un:eq:drift}.

Substitute
$e_{\mathrm t}=(e_{\mathrm t}-\Lop_\kappa^{-1}\Dop_\kappa e_{\mathrm n})
 +\Lop_\kappa^{-1}\Dop_\kappa e_{\mathrm n}$ in
\eqref{un:eq:normal}. Since
$J_x:H^1(\Torus;\R^2)\to H^{-1}(\Torus)$,
\eqref{un:eq:resolvent} gives
\[
 \tau^2\|(I+\tau\Sop_\kappa)^{-1}J_xe\|_{H^1}
 \le C\tau\|e\|_{H^1}.
\]
Equations \eqref{un:eq:normal} and \eqref{un:eq:gamma}, together with
\eqref{un:eq:resolvent}--\eqref{un:eq:drift}, therefore give
\[
\begin{aligned}
 \|u_{\mathrm n}\|_{H^1}
 &\le(1+C\tau)\|e_{\mathrm n}\|_{H^1}
   +C\tau\|e_{\mathrm t}-\Lop_\kappa^{-1}
                         \Dop_\kappa e_{\mathrm n}\|_{H^1},\\
 \|u_{\mathrm t}-\Lop_\kappa^{-1}\Dop_\kappa u_{\mathrm n}\|_{H^1}
 &\le(1+C\tau)
       \|e_{\mathrm t}-\Lop_\kappa^{-1}
                         \Dop_\kappa e_{\mathrm n}\|_{H^1}
       +C\tau\|e_{\mathrm n}\|_{H^1}.
\end{aligned}
\]
Summing these inequalities yields
\begin{equation}\label{un:eq:fixed-geometry-stability}
 \|u\|_x\le(1+C\tau)\|e\|_x.
\end{equation}

It remains to compare $\|\cdot\|_{x^m}$ and $\|\cdot\|_{x^{m+1}}$.
For each level $j\in\{m,m+1\}$ and smooth periodic scalar function $f$,
their pullbacks to the common material coordinate are
\[
\begin{aligned}
 \Lop_{\kappa^j}f
 &=-\frac1{|(x^j)'|}\partial_\rho
       \left(\frac{f'}{|(x^j)'|}\right)
       +(\kappa^j)^2f,\\
 \Dop_{\kappa^j}f
 &=\frac{\partial_\rho\kappa^jf
             +2\kappa^jf'}{|(x^j)'|}.
\end{aligned}
\]
Use \eqref{un:eq:norm} at each input $x^j$.
The material velocity bounds in \eqref{reg:eq:material-high}
show that the smooth coefficients, frames and arclength weights
change by $O(\tau)$.
For the operator in the second component of \eqref{un:eq:norm}, use
the exact identity
\[
\begin{aligned}
 &\Lop_{\kappa^{m+1}}^{-1}\Dop_{\kappa^{m+1}}
   -\Lop_{\kappa^m}^{-1}\Dop_{\kappa^m}\\
 &\quad=\Lop_{\kappa^{m+1}}^{-1}
       (\Dop_{\kappa^{m+1}}-\Dop_{\kappa^m})
   +\Lop_{\kappa^{m+1}}^{-1}
       (\Lop_{\kappa^m}-\Lop_{\kappa^{m+1}})
       \Lop_{\kappa^m}^{-1}\Dop_{\kappa^m}.
\end{aligned}
\]
It has $H^1\to H^1$ norm $O(\tau)$ by uniform elliptic bounds.
Consequently,
$\|e\|_{x^{m+1}}\le(1+C\tau)\|e\|_{x^m}$ for
$e\in H^1(\Torus;\R^2)$. Combining this norm comparison with
\eqref{un:eq:fixed-geometry-stability} at $x=x^m$ proves
\eqref{un:eq:continuous-stability}.
Uniform norm equivalence is used only in terms already multiplied by
$O(\tau)$. The bound extends the linearized update from smooth
directions to $H^1$ by density.
\end{proof}

\subsection{Comparison of the linearized steps}

Unqualified Sobolev and $L^p$ norms below are taken in the material
coordinate on $\Torus$, whereas $\|\cdot\|_x$ denotes the
arclength-based norm \eqref{un:eq:norm}.
\begin{lemma}\label{un:lem:derivative}
Let $x=x^m$, $0\le m<M$, be the time-semidiscrete solution,
$e_h\in V_h^2$, and $u=D\F_\tau(x)e_h$.
For $0<h\le\tau\le1$,
\begin{equation}\label{un:eq:derivative}
 \|D\F_{h,\tau}(I_hx)e_h-I_hD\F_\tau(x)e_h\|_{H^1}
 \le Ch\tau^{-1}\|e_h\|_{H^1}.
\end{equation}
The interpolation error satisfies
\begin{equation}\label{un:eq:derivinterp}
 \|u-I_hu\|_{H^1}+h^{-1}\|u-I_hu\|_2
 \le Ch\tau^{-1}\|e_h\|_{H^1}.
\end{equation}
\end{lemma}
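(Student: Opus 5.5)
I will prove \eqref{un:eq:derivinterp} first, since the comparison estimate \eqref{un:eq:derivative} relies on it. The direction $e_h$ is only piecewise smooth, so $u=D\F_\tau(x)e_h$ inherits element-kink behavior through the data of the linearized equation \eqref{un:eq:linearized}. I therefore begin with a regularity bound for $u$ in the broken space.

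\emph{Step 1: broken $H^2$ bound for $u$.} In the strong form, \eqref{un:eq:linearized} reads
\[
 \partial_s^2u=\tau^{-1}(\text{normal mass term in }u-e_h)+(\text{terms with }\partial_s e_h).
\]
This step carries the $\tau^{-1}$ loss, so I expect it to be the main obstacle. I write $u-e_h=\tau w$, so the mass term is $b_x(w,\cdot)$ with no $\tau^{-1}$. The velocity perturbation $w$ is bounded in $H^1$ by $C\tau^{-1}\|e_h\|_{H^1}$, which follows from \eqref{un:eq:resolvent} and \eqref{un:eq:normal}. The geometry-derivative terms in \eqref{un:eq:formderiv} contain only first derivatives of $e_h$ multiplied by smooth coefficients; by Lemma~\ref{reg:lem:regularity} these coefficients are $\partial_s\F_\tau(x)$, $v$, $\mathbf n$, $\mathbf t$. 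In divergence form, $\partial_s(\partial_s u+(\mathbf t\cdot\partial_s e_h)\partial_s\F_\tau(x))$ equals an $L^2$ function of size $C\tau^{-1}\|e_h\|_{H^1}$. Consequently the flux $\partial_s u+(\mathbf t\cdot\partial_s e_h)\partial_s\F_\tau(x)$ lies in $H^1$ with that bound. On each element $e_h$ is polynomial, and inverse estimates on the quasi-uniform mesh bound $\|e_h\|_{H^2(K)}\le Ch^{-1}\|e_h\|_{H^1(K)}$. Therefore the elementwise norm satisfies $\|u\|_{H^2(K)}$ summed $\le C(\tau^{-1}+h^{-1})\|e_h\|_{H^1}$, and I would like to avoid the $h^{-1}$ part.

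To avoid it, I would rather interpolate $u-\tau w=e_h$ exactly. Since $I_he_h=e_h$, we have $u-I_hu=\tau(w-I_hw)$. Hence it suffices to bound $\|w\|_{H^2,\mathrm{broken}}\le C\tau^{-2}\|e_h\|_{H^1}$. Equivalently, I use the decomposition $u=e_h+\tau w$ together with the bound $\|\partial_s^2 u\|_{L^2(K)}\le C\tau^{-1}\|e_h\|_{H^1}$ plus a piecewise-polynomial part. The piecewise-polynomial part is reproduced exactly by $I_h$ only where it is degree $\le k$, so I would isolate it: I subtract from the flux identity the term $(\mathbf t\cdot\partial_se_h)\partial_s\F_\tau(x)$, whose interpolation error is $O(h)\|e_h\|_{H^1}$ by the coefficient-oscillation argument. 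Standard interpolation estimates on each element then give $\|u-I_hu\|_{H^1}+h^{-1}\|u-I_hu\|_2\le Ch\tau^{-1}\|e_h\|_{H^1}$, using $h\le\tau$ to absorb the $O(h)$ terms.

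\emph{Step 2: comparison.} Set $u_h=D\F_{h,\tau}(I_hx)e_h$. It satisfies the discrete analogue of \eqref{un:eq:linearized}, with $a_{I_hx}$, $b_{I_hx,h}$ and their derivatives $a'$, $b'$ at $I_hx$, and with the discrete velocity $v_h=(\F_{h,\tau}(I_hx)-I_hx)/\tau$. By Lemma~\ref{un:lem:defect}, $v_h$ is $O(h^{k+1}/\tau)$-close to $I_hv$ in $H^1$. I insert $I_hu$ into the discrete equation and bound the residual against $\phi_h$ in three pieces. The first piece is the stiffness term, which is controlled by \eqref{un:eq:derivinterp} and the $O(h^k)$ geometry difference. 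The second is the mass term $\tau^{-1}b_{I_hx,h}(I_hu-e_h,\phi_h)$ versus $\tau^{-1}b_x(u-e_h,\phi_h)$; here the quadrature and averaged-normal errors are $O(h)$, giving $Ch\tau^{-1}\|e_h\|_{H^1}\|\phi_h\|_{H^1}$. The third is the set of geometry-derivative terms; they are linear in $\partial_se_h$ with coefficients differing by $O(h)$ plus $O(h^{k+1}/\tau)$ from $v_h$, and they contribute at most $Ch\tau^{-1}$. Finally, uniform coercivity \eqref{un:eq:coercivity} of $a+\tau^{-1}b$ gives the $H^1$ bound after multiplying through by $\tau$, which yields \eqref{un:eq:derivative}.
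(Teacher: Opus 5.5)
Your overall plan is the paper's: split off the stiffness-variation part of $u$, obtain $H^2$ regularity with constant $C\tau^{-1}$ for the rest, then insert $I_hu$ into the differentiated assembled equation and use coercivity. Your Step~2 is essentially the paper's residual count. The gap is in Step~1. The move that actually yields \eqref{un:eq:derivinterp} is only gestured at, and the versions you write down fail, for three reasons.

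(i) There is a sign error. By \eqref{un:eq:formderiv}, $a'_x[e_h](\F_\tau(x),\phi)=-\int(\mathbf t\cdot\partial_s e_h)\,\partial_s\F_\tau(x)\cdot\partial_s\phi$. So the flux that lies in $H^1$ is $G=\partial_s u-(\mathbf t\cdot\partial_s e_h)\partial_s\F_\tau(x)$. With your plus sign, its derivative picks up Dirac masses from the jumps of $e_h'$ at the nodes.

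(ii) The reduction $u-I_hu=\tau(w-I_hw)$ with $\|w\|_{H^2,\mathrm{broken}}\le C\tau^{-2}\|e_h\|_{H^1}$ is false for $k\ge2$. Since $[\F_\tau(x)]'=x'+\tau v'$, the function $\tau w=u-e_h$ equals $-(\mathbf n\otimes\mathbf n)e_h$ up to an $H^2$ function and $O(\tau)$ smooth multiples of $e_h$. Its elementwise second derivative therefore contains $(\mathbf n\otimes\mathbf n)e_h''$, which is of size $h^{-1}\|e_h\|_{H^1}$ for mesh-scale oscillations of $e_h\cdot\mathbf n$. (For $k=1$, where $e_h''=0$ on elements, a broken $H^2$ argument would in fact work.)

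(iii) Your fallback, taking ``the interpolation error of the term $(\mathbf t\cdot\partial_s e_h)\partial_s\F_\tau(x)$'', is at the wrong level. $I_h$ acts on $u$, not on its flux, and it does not commute with differentiation. Moreover, the non-$H^2$ part of $u$ is not piecewise polynomial, so it is not reproduced by $I_h$.

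What is missing is the function-level splitting the paper uses. Set $B_x=[\F_\tau(x)]'\otimes x'/|x'|^2$. Then $(\mathbf t\cdot\partial_s e_h)\partial_s\F_\tau(x)=|x'|^{-1}B_xe_h'$ and $B_xe_h'=(B_xe_h)'-B_x'e_h$. Hence $z=u-B_xe_h$ satisfies $z'=|x'|G-B_x'e_h\in H^1$. Together with $\|z\|_{H^1}\le C\|e_h\|_{H^1}$ from \eqref{un:eq:continuous-stability}, this gives $\|z\|_{H^2}\le C\tau^{-1}\|e_h\|_{H^1}$ without differentiating $e_h$ twice. Therefore $\|z-I_hz\|_{H^1}+h^{-1}\|z-I_hz\|_2\le Ch\tau^{-1}\|e_h\|_{H^1}$.

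The piece $B_xe_h$ still has broken $H^2$ norm of order $h^{-1}$, so it must be handled by superapproximation, not by an $H^2$ bound. On each $K$ write $B_xe_h=B_x\overline e_K+B_x(\rho_K)(e_h-\overline e_K)+(B_x-B_x(\rho_K))(e_h-\overline e_K)$. The middle term lies in $P^k(K)^2$ and is interpolated exactly. In the last term, the $O(h_K)$ oscillation of $B_x$ pays for the inverse estimate on $e_h''$. This gives $\|I_h(B_xe_h)-B_xe_h\|_{H^1}+h^{-1}\|I_h(B_xe_h)-B_xe_h\|_2\le Ch\|e_h\|_{H^1}$, and with $h\le\tau$ it yields \eqref{un:eq:derivinterp}.

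Your Step~2 then goes through, with two precision points. The term $\tau^{-1}b_x(I_hu-u,\phi_h)$ is exactly where the $L^2$ half of \eqref{un:eq:derivinterp} and $h\le\tau$ enter, since $h^2\tau^{-2}\le h\tau^{-1}$. Also, the stiffness variation applied to $\F_{h,\tau}(I_hx)-I_h\F_\tau(x)$ costs $Ch^{k+1/2}$ via an inverse inequality, because all three factors in $a'$ carry a derivative. It is not $O(h^{k+1}/\tau)$, though it is still $\le Ch\tau^{-1}$.
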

\begin{proof}
Replace the velocity derivative $w$ in \eqref{un:eq:linearized} by
$(u-e_h)/\tau$. Using the stiffness variation in
\eqref{un:eq:formderiv}, extract the leading directional term by setting
\[
 z=u-\frac{e_h\cdot x'}{|x'|^2}[\F_\tau(x)]'.
\]
The term containing $e_h'$ in
$a_x\big((e_h\cdot x')[\F_\tau(x)]'/|x'|^2,\phi\big)$
cancels the stiffness-variation term
$a'_x[e_h](\F_\tau(x),\phi)$, leaving
\[
\begin{aligned}
 (a_x+\tau^{-1}b_x)(z,\phi)
 ={}&\tau^{-1}b_x\left(e_h-
       \frac{e_h\cdot x'}{|x'|^2}[\F_\tau(x)]',\phi\right)-b'_x[e_h](v,\phi)\\
    &-\int\frac1{|x'|}\partial_\rho\left(
       \frac{[\F_\tau(x)]'\otimes x'}{|x'|^2}\right)e_h\cdot\phi'.
\end{aligned}
\]
The time-semidiscrete stability estimate
\eqref{un:eq:continuous-stability} gives $\|u\|_{H^1}\le C\|e_h\|_{H^1}$.
The subtracted term has a smooth coefficient, so
$\|z\|_{H^1}\le C\|e_h\|_{H^1}$.
The strong equation for $z$ has principal part
$-(|x'|^{-1}z')'$ and $L^2$ forcing consisting of smooth coefficients
times $e_h,e_h',\tau^{-1}e_h,\tau^{-1}z$, together with the derivative
of the coefficient-weighted $e_h$ term in the last integral above.
Thus
\begin{equation}\label{un:eq:zH2}
 \|z\|_{H^2}\le C\tau^{-1}\|e_h\|_{H^1}.
\end{equation}
No second derivative of $e_h$ is invoked.

Smooth-product superapproximation gives
\[
\begin{aligned}
 &\left\|I_h\left(\frac{e_h\cdot x'}{|x'|^2}[\F_\tau(x)]'\right)
          -\frac{e_h\cdot x'}{|x'|^2}[\F_\tau(x)]'\right\|_{H^1}\\
 &\quad+h^{-1}\left\|I_h\left(
           \frac{e_h\cdot x'}{|x'|^2}[\F_\tau(x)]'\right)
          -\frac{e_h\cdot x'}{|x'|^2}[\F_\tau(x)]'\right\|_2
 \le Ch\|e_h\|_{H^1}.
\end{aligned}
\]
Indeed, on each element $K$, write
\[
 B_x=\frac{[\F_\tau(x)]'\otimes x'}{|x'|^2},
 \qquad
 \overline e_K=|K|^{-1}\int_K e_h,
\]
and fix $\rho_K\in K$. Since the constant matrix
$B_x(\rho_K)$ preserves $P^k(K)^2$,
\[
\begin{aligned}
 I_K(B_xe_h)-B_xe_h
 ={}&I_K(B_x\overline e_K)-B_x\overline e_K\\
 &+I_K\bigl((B_x-B_x(\rho_K))(e_h-\overline e_K)\bigr)
      -(B_x-B_x(\rho_K))(e_h-\overline e_K).
\end{aligned}
\]
The local interpolation, Poincar\'e and polynomial inverse estimates give
\[
\begin{gathered}
 \|B_x-B_x(\rho_K)\|_{L^\infty(K)}\le Ch_K,
 \qquad
 \|e_h-\overline e_K\|_{L^2(K)}
       \le Ch_K\|e_h'\|_{L^2(K)},\\
 \|I_K(B_xe_h)-B_xe_h\|_{L^2(K)}
 \le Ch_K^2\bigl(\|\overline e_K\|_{L^2(K)}
                         +\|e_h'\|_{L^2(K)}\bigr),\\
 \|\partial_\rho[I_K(B_xe_h)-B_xe_h]\|_{L^2(K)}
 \le Ch_K\bigl(\|\overline e_K\|_{L^2(K)}
                         +\|e_h'\|_{L^2(K)}\bigr).
\end{gathered}
\]
Summing over the quasi-uniform mesh proves the displayed
superapproximation estimate. Moreover, \eqref{un:eq:zH2} gives
\[
 \|z-I_hz\|_{H^1}+h^{-1}\|z-I_hz\|_2
 \le Ch\|z\|_{H^2}
 \le Ch\tau^{-1}\|e_h\|_{H^1}.
\]
Together with $u=z+B_xe_h$ and $\tau\le1$, this proves
\eqref{un:eq:derivinterp}.

Finally, \eqref{un:eq:continuous-stability},
\eqref{un:eq:derivinterp}, and $h\le\tau$ give directly
\[
 \|I_hu\|_{H^1}
 \le\|u\|_{H^1}+\|I_hu-u\|_{H^1}
 \le C(1+h\tau^{-1})\|e_h\|_{H^1}
 \le C\|e_h\|_{H^1}.
\]

For the fully discrete update at $I_hx$,
Lemma~\ref{un:lem:defect}, an inverse inequality, and
$\F_\tau(x)=x+\tau v$ give
\begin{equation}\label{un:eq:primal}
\begin{aligned}
 \|\partial_\rho[\F_{h,\tau}(I_hx)-I_h\F_\tau(x)]\|_\infty
 &\le Ch^{k+1/2},\\
 \frac{\F_{h,\tau}(I_hx)-I_hx}{\tau}
 &=I_hv+\frac{\F_{h,\tau}(I_hx)-I_h\F_\tau(x)}{\tau}.
\end{aligned}
\end{equation}
For the same direction $e_h$, set $u_h=D\F_{h,\tau}(I_hx)e_h$.
Differentiating the actual assembled discrete equation gives,
for every $\phi_h\in V_h^2$,
\[
\begin{aligned}
 &(a_{I_hx}+\tau^{-1}b_{I_hx,h})(u_h,\phi_h)\\
 &\quad=\tau^{-1}b_{I_hx,h}(e_h,\phi_h)
         -a'_{I_hx}[e_h](\F_{h,\tau}(I_hx),\phi_h)-b'_{I_hx,h}[e_h]\left(
           \frac{\F_{h,\tau}(I_hx)-I_hx}{\tau},\phi_h\right).
\end{aligned}
\]
We compare this equation with \eqref{un:eq:linearized}, after
substituting $(u-e_h)/\tau$ for its velocity derivative. For arbitrary
$f_h,g_h\in V_h^2$,
\begin{align}
 |a_{I_hx}(f_h,g_h)-a_x(f_h,g_h)|
 &\le Ch^k\|f_h\|_{H^1}\|g_h\|_{H^1},\nonumber\\
 |b_{I_hx,h}(f_h,g_h)-b_x(f_h,g_h)|
 &\le C(h^k+h^2)\|f_h\|_{H^1}\|g_h\|_{H^1}.
 \label{un:eq:formcomparison}
\end{align}
The stiffness estimate and the $O(h^k)$ geometry part of the mass
estimate follow from the interpolation bounds used in the proof of
Lemma~\ref{un:lem:defect}. For the remaining mass quadrature error,
the elementwise constant-subtraction argument in
\eqref{un:eq:GLflux}--\eqref{un:eq:GLload} gives the $h^2$ term.
For $k\ge2$, this uses $k+1\le2k-1$. For $k=1$, the trapezoidal
remainder and the elementwise affinity of $f_h,g_h$ give the same bound.

For the differentiated mass, evaluate $p_i$ and $\mu_i$ at
$\widetilde x_h=I_hx$ and differentiate the assembled matrix
$C_i=(Jp_i)(Jp_i)^T/\mu_i$ before comparing with continuous forms.
Since $Dp_i[e_h]=\sum_{K\ni i}w_{K,i}(e_h|_K)'(\rho_i)$, its derivative is
\begin{equation}\label{un:eq:massvariation}
\begin{aligned}
 DC_i[e_h]={}&\frac{(JDp_i[e_h])(Jp_i)^T+(Jp_i)(JDp_i[e_h])^T}{\mu_i}\\
 &-\frac{(Jp_i)(Jp_i)^T}{\mu_i^2}
       \sum_{K\ni i}w_{K,i}
       \frac{(I_hx|_K)'(\rho_i)}{|(I_hx|_K)'(\rho_i)|}
       \cdot (e_h|_K)'(\rho_i).
\end{aligned}
\end{equation}
Replacing the interpolated geometry coefficients by their smooth
nodal values costs $Ch^k\|e_h\|_{H^1}\|\phi_h\|_{H^1}$.
At those smooth values \eqref{un:eq:massvariation} is exactly the
elementwise quadrature of the continuous mass variation. Its smooth
coefficient multiplies $e_h'\phi_h$, of degree at most $2k-1$.
Freezing that coefficient therefore gives
\begin{equation}\label{un:eq:bprimecomp}
 |b'_{I_hx,h}[e_h](I_hv,\phi_h)-b'_x[e_h](v,\phi_h)|
 \le Ch\|e_h\|_{H^1}\|\phi_h\|_{H^1}.
\end{equation}
Exact stiffness and smooth interpolation similarly give an $h^k$
bound for $a'_{I_hx}[e_h](I_h\F_\tau(x),\phi_h)-a'_x[e_h](\F_\tau(x),\phi_h)$.
Finally, Lemma~\ref{un:lem:defect} and \eqref{un:eq:primal} yield
the replacement errors
\begin{align*}
 \left|b'_{I_hx,h}[e_h]\left(
 \frac{\F_{h,\tau}(I_hx)-I_h\F_\tau(x)}{\tau},\phi_h\right)\right| & \le Ch^{k+1}\tau^{-1}\|e_h\|_{H^1}\|\phi_h\|_{H^1},\\
 \left|a'_{I_hx}[e_h](
	\F_{h,\tau}(I_hx)-I_h\F_\tau(x),\phi_h)\right| & \le Ch^{k+1/2}\|e_h\|_{H^1}\|\phi_h\|_{H^1}.
\end{align*}
The first uses $\|\F_{h,\tau}(I_hx)-I_h\F_\tau(x)\|_\infty\le Ch^{k+1}$ and
weighted Cauchy--Schwarz for $e_h'$ in \eqref{un:eq:massvariation}.
The second uses
$\|\partial_\rho[\F_{h,\tau}(I_hx)-I_h\F_\tau(x)]\|_\infty$.

Combining these bounds, the residual of $I_hu$ in the discrete
derivative equation is at most
\[
 C ( h\tau^{-1}+h^2\tau^{-2}+(h^k+h^2)\tau^{-1}
       +h+h^{k+1}\tau^{-1}+h^{k+1/2} )
 \|e_h\|_{H^1}\|\phi_h\|_{H^1}.
\]
For $h\le\tau\le1$ it is bounded by
$Ch\tau^{-1}\|e_h\|_{H^1}\|\phi_h\|_{H^1}$.
Coercivity proves \eqref{un:eq:derivative}.
\end{proof}

Combining \eqref{un:eq:continuous-stability} with Lemma
\ref{un:lem:derivative}, using uniform norm equivalence only on the
consistency errors, gives
\begin{equation}\label{un:eq:discrete-stability}
 \bigl\|D\F_{h,\tau}(I_hx^{m})e_h\bigr\|_{x^{m+1}}
 \le(1+C\tau+Ch\tau^{-1})\|e_h\|_{x^{m}}.
\end{equation}

\section{The fully discrete error estimate}\label{sec:nonlinear}

For fixed $k\ge1$, we bound the nonlinear remainder at the smooth
interpolant and use Section~\ref{sec:consistency} to complete the
induction for Theorem~\ref{un:thm:main}. Constants may depend on $k$
but are independent of $h,\tau,m$.

\subsection{The nonlinear remainder}

\begin{lemma}\label{un:lem:remainder}
Let $x=x^m$, $0\le m<M$, be the time-semidiscrete solution,
$e_h\in V_h^2$, and
$0<h\le\tau\le1$. If the segment
$I_hx+\varepsilon e_h$, $0\le\varepsilon\le1$, lies in the fixed
input neighborhood of \eqref{un:eq:coercivity}, the remainder
\[
 R_h(e_h)=\F_{h,\tau}(I_hx+e_h)-\F_{h,\tau}(I_hx)-D\F_{h,\tau}(I_hx)e_h
\]
satisfies
\begin{equation}\label{un:eq:remainder}
 \|R_h(e_h)\|_{H^1}
 \le C(h^{-1/2}+\tau^{-1})\|e_h\|_{H^1}^2.
\end{equation}
For $h\le\tau^2$, the right-hand side is at most
$Ch^{-1/2}\|e_h\|_{H^1}^2$.
\end{lemma}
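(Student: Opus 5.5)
We would prove \eqref{un:eq:remainder} by differentiating the assembled position equation \eqref{eq:position-eliminated} twice along the segment. Set $y_\varepsilon=I_hx+\varepsilon e_h$ and $X_\varepsilon=\F_{h,\tau}(y_\varepsilon)$. Each $X_\varepsilon$ exists uniquely and depends smoothly on $\varepsilon$, because the forms $a_{y_\varepsilon}$ and $b_{y_\varepsilon,h}$ are rational functions of the nodal data and \eqref{un:eq:coercivity} applies along the whole segment. Taylor's formula then gives
\[
 R_h(e_h)=\int_0^1(1-\varepsilon)\,\frac{d^2}{d\varepsilon^2}X_\varepsilon\,d\varepsilon,
\]
so it suffices to bound $\ddot X_\varepsilon:=\frac{d^2}{d\varepsilon^2}X_\varepsilon$ in $H^1$, uniformly in $\varepsilon$.

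The first step is to write the equation for $\ddot X_\varepsilon$. Differentiating $a_{y_\varepsilon}(X_\varepsilon,\phi_h)+\tau^{-1}b_{y_\varepsilon,h}(X_\varepsilon-y_\varepsilon,\phi_h)=0$ twice gives
\[
 (a_{y_\varepsilon}+\tau^{-1}b_{y_\varepsilon,h})(\ddot X_\varepsilon,\phi_h)
 =-2a'[e_h](\dot X_\varepsilon,\phi_h)-a''[e_h,e_h](X_\varepsilon,\phi_h)
  -2\tau^{-1}b'[e_h](\dot X_\varepsilon-e_h,\phi_h)
  -\tau^{-1}b''[e_h,e_h](X_\varepsilon-y_\varepsilon,\phi_h).
\]
Coercivity reduces the claim to bounding the right-hand side by $C(h^{-1/2}+\tau^{-1})\|e_h\|_{H^1}^2\|\phi_h\|_{H^1}$. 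The needed first-order input is $\|\dot X_\varepsilon\|_{H^1}\le C\|e_h\|_{H^1}$. At $\varepsilon=0$ this follows from \eqref{un:eq:discrete-stability}. For $\varepsilon>0$ we would redo the argument of Lemma~\ref{un:lem:derivative} at the perturbed geometry. A cruder alternative is to bound $\|\dot X_\varepsilon\|_{H^1}$ directly from coercivity, as long as its $\tau^{-1}$ loss is only charged to terms that are already quadratic.

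Next we bound the form derivatives. Both $a'$ and $a''$ involve factors $e_h'$ multiplying smooth functions of $y_\varepsilon'$. For example,
\[
 |a''[e_h,e_h](X_\varepsilon,\phi_h)|\le C\int |e_h'|^2|X_\varepsilon'||\phi_h'|\,d\rho .
\]
The quantity $X_\varepsilon'$ is bounded in $L^\infty$ via \eqref{un:eq:primal} and the neighborhood. Bounding one factor $e_h'$ in $L^\infty$ by the inverse estimate $\|e_h'\|_\infty\le Ch^{-1/2}\|e_h\|_{H^1}$ then gives the $h^{-1/2}$ contribution. The term $2a'[e_h](\dot X_\varepsilon,\phi_h)$ contains $e_h'\dot X_\varepsilon'\phi_h'$ and is treated the same way. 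The mass terms carry the factor $\tau^{-1}$. The assembled $b''$ is built from $D^2C_i[e_h,e_h]$, which is quadratic in the nodal slopes $(e_h|_K)'(\rho_i)$, and it is paired with $(X_\varepsilon-y_\varepsilon)/\tau$. This quotient is a velocity, bounded in $L^\infty$ by \eqref{un:eq:primal} together with \eqref{reg:eq:material-high}. Hence this term costs only $h^{-1/2}$, obtained by taking one slope in $\ell^\infty$ and using the weighted nodal Cauchy--Schwarz estimate as in \eqref{un:eq:massvariation}.

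The main obstacle is the term $\tau^{-1}b'[e_h](\dot X_\varepsilon-e_h,\phi_h)$. Here $b'$ contains $e_h'$ at the nodes, and a naive bound gives $\tau^{-1}h^{-1/2}$. We would handle it by writing $\dot X_\varepsilon-e_h=\tau\dot V_\varepsilon$, where $\dot V_\varepsilon$ is the derivative of the discrete velocity. Inverse estimates then bound $\|\dot V_\varepsilon\|_\infty$. To avoid any inverse estimate here, we would instead control the $L^2$ norm of $\dot X_\varepsilon-e_h$ by $C\|e_h\|_{H^1}$, so that the weighted nodal pairing against $e_h'$ and $\phi_h$ is bounded by $C\tau^{-1}\|e_h\|_{H^1}^2\|\phi_h\|_{H^1}$; this is the source of the $\tau^{-1}$ in \eqref{un:eq:remainder}. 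Finally, when $h\le\tau^2$ we have $\tau^{-1}\le h^{-1/2}$, which gives the last assertion of the lemma.
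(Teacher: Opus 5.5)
Your individual term bounds have the right scaling. The $h^{-1/2}$ comes from $\|e_h'\|_\infty$ in the stiffness variations, and the $\tau^{-1}$ comes from pairing the mass variation with $\dot X_\varepsilon-e_h$, which is only $O(\|e_h\|_{H^1})$. However, the Taylor formula along the segment creates a gap that you do not close.

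\textbf{The gap.} The formula requires, uniformly in $\varepsilon\in[0,1]$, the bounds
$\|X_\varepsilon'\|_\infty\le C$, $\|(X_\varepsilon-y_\varepsilon)/\tau\|_\infty\le C$ and $\|\dot X_\varepsilon\|_{H^1}\le C\|e_h\|_{H^1}$ for the outputs $X_\varepsilon=\F_{h,\tau}(I_hx+\varepsilon e_h)$.

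\begin{itemize}
\item The results you cite, \eqref{un:eq:primal}, \eqref{un:eq:discrete-stability} and Lemma~\ref{un:lem:derivative}, hold only at $\varepsilon=0$. They rely on the input being the interpolant of the smooth curve $x^m$, through comparison with $\F_\tau(x)$ and $D\F_\tau(x)$, superapproximation, and the $H^2$ bound \eqref{un:eq:zH2}.
\item The neighborhood hypothesis controls only the inputs $y_\varepsilon$, not the outputs $X_\varepsilon$.
\item ``Redoing Lemma~\ref{un:lem:derivative} at the perturbed geometry'' is therefore not available.
\item The cruder coercivity route needs exactly the missing $L^\infty$ bounds on $X_\varepsilon'$ and on the discrete velocity. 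At a rough input, energy estimates only give $\tau\|(X_\varepsilon-y_\varepsilon)/\tau\|_{H^1}\le C$. This loses extra powers of $\tau^{-1}$ or $h^{-1/2}$ in the $a''$, $a'$ and $b'$ terms.
\end{itemize}

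A continuity argument in $\varepsilon$ would close this if $\|e_h\|_{H^1}\le c\min\{h^{1/2},\tau\}$. That smallness holds where the lemma is applied in Section~\ref{sec:nonlinear}, but it is not a hypothesis of the lemma, and your proposal does not make this argument.

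\textbf{The missing idea.} Avoid derivatives of $\F_{h,\tau}$ away from $I_hx$ altogether. Subtract the step equations at $I_hx+e_h$ and at $I_hx$, and the derivative equation for $u_h=D\F_{h,\tau}(I_hx)e_h$. This gives the exact identity \eqref{un:eq:remainder-identity}:
\[
 (a_{I_hx+e_h}+\tau^{-1}b_{I_hx+e_h,h})(R_h(e_h),\phi_h)
 =-\delta a(u_h,\phi_h)-\tau^{-1}\delta b_h(u_h-e_h,\phi_h)
 -\delta^2a(\F_{h,\tau}(I_hx),\phi_h)
 -\tau^{-1}\delta^2b_h(\F_{h,\tau}(I_hx)-I_hx,\phi_h).
\]
In this identity the discrete solution enters only at the base point, where \eqref{un:eq:reference-bounds} is available. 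All dependence on the segment sits in the form increments $\delta a$, $\delta b_h$ and their first-order remainders. These depend only on the input geometry and are controlled by the neighborhood hypothesis. Your term bounds then carry over essentially verbatim.

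The mass increments also need no inverse estimate. Gauss--Lobatto exactness gives $\sum_i m_iE_i^2=\|e_h'\|_2^2$, so weighted Cauchy--Schwarz yields
$|\delta b_h(f_h,\phi_h)|\le C\|e_h\|_{H^1}\|f_h\|_\infty\|\phi_h\|_\infty$ and $|\delta^2b_h(f_h,\phi_h)|\le C\|e_h\|_{H^1}^2\|f_h\|_\infty\|\phi_h\|_\infty$.
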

\begin{proof}
Throughout this proof, $u_h=D\F_{h,\tau}(I_hx)e_h$ is the vector
update derivative at $I_hx$. The vector velocity is $v=(\F_\tau(x)-x)/\tau$.
Lemmas~\ref{un:lem:defect} and~\ref{un:lem:derivative}, together
with the inverse inequality and \eqref{un:eq:continuous-stability},
give
\begin{equation}\label{un:eq:reference-bounds}
\begin{gathered}
 \|\F_{h,\tau}(I_hx)\|_{\W}\le C,\qquad
 \|\F_{h,\tau}(I_hx)-I_hx\|_\infty\le C\tau,\\
 \|u_h\|_{H^1}\le C\|e_h\|_{H^1}.
\end{gathered}
\end{equation}
Indeed, the one-step defect satisfies
\[
\begin{aligned}
 \|\F_{h,\tau}(I_hx)-I_h\F_\tau(x)\|_{H^1}&\le Ch^{k+1},\\
 \|\F_{h,\tau}(I_hx)-I_h\F_\tau(x)\|_{\W}&\le Ch^{k+1/2}.
\end{aligned}
\]
Use $h^{k+1}\le\tau$ and the identity
\[
 \F_{h,\tau}(I_hx)-I_hx
 =\tau I_hv+\F_{h,\tau}(I_hx)-I_h\F_\tau(x).
\]

For the fixed input pair $I_hx$ and $I_hx+e_h$, define the form
increments by
\[
 \delta a=a_{I_hx+e_h}-a_{I_hx},\qquad
 \delta b_h=b_{I_hx+e_h,h}-b_{I_hx,h}.
\]
Their first-order remainders are
$\delta^2a=\delta a-a'_{I_hx}[e_h]$ and
$\delta^2b_h=\delta b_h-b'_{I_hx,h}[e_h]$.
Subtracting the two step equations and the derivative equation gives,
for every $\phi_h\in V_h^2$, the exact identity
\begin{align}
 &(a_{I_hx+e_h}+\tau^{-1}b_{I_hx+e_h,h})(R_h(e_h),\phi_h)\nonumber\\
 &\quad=-\delta a(u_h,\phi_h)
      -\tau^{-1}\delta b_h(u_h-e_h,\phi_h)\nonumber\\
 &\qquad-\delta^2a(\F_{h,\tau}(I_hx),\phi_h)
      -\tau^{-1}\delta^2b_h(\F_{h,\tau}(I_hx)-I_hx,\phi_h).
 \label{un:eq:remainder-identity}
\end{align}

The first two derivatives of $p\mapsto|p|^{-1}$
are bounded on the input neighborhood. The one-dimensional inverse
estimates for fixed-degree polynomials give
\begin{align*}
 |\delta a(u_h,\phi_h)|
 &\le C\|e_h'\|_\infty\|u_h\|_{H^1}\|\phi_h\|_{H^1}
 \le Ch^{-1/2}\|e_h\|_{H^1}^2\|\phi_h\|_{H^1},\\
 |\delta^2a(\F_{h,\tau}(I_hx),\phi_h)|
 &\le C\|e_h'\|_{L^4}^2\|[\F_{h,\tau}(I_hx)]'\|_\infty\|\phi_h'\|_2
 \le Ch^{-1/2}\|e_h\|_{H^1}^2\|\phi_h\|_{H^1}.
\end{align*}

For the assembled normal mass, use the matrices
$C_i=(Jp_i)(Jp_i)^T/\mu_i$ from \eqref{un:eq:massvariation}, now
evaluated at the variable input $\widetilde x_h=I_hx+\varepsilon e_h$,
$0\le\varepsilon\le1$. Define the nodal weights and weighted derivative
magnitudes by
\[
 m_i=\sum_{K\ni i}w_{K,i},\qquad
 E_i=\left(m_i^{-1}\sum_{K\ni i}w_{K,i}|(e_h|_K)'(\rho_i)|^2\right)^{1/2}.
\]
Positive weights and uniform speed bounds along the input segment
give
\[
 \frac{|Dp_i[e_h]|+|D\mu_i[e_h]|}{m_i}\le CE_i,\qquad
 \frac{|D^2\mu_i[e_h,e_h]|}{m_i}\le CE_i^2.
\]
The first estimate is weighted Cauchy--Schwarz. The second follows
by twice differentiating $|(\widetilde x_h)'|$. Since $p_i$ is linear in $\widetilde x_h$,
differentiation of $C_i$ yields
\[
 |DC_i[e_h]|\le Cm_iE_i,\qquad
 |D^2C_i[e_h,e_h]|\le Cm_iE_i^2.
\]
Moreover,
\begin{equation}\label{un:eq:quadrature-energy}
 \sum_i m_iE_i^2
 =\sum_{K\in\mathcal T_h}Q_K(|e_h'|^2)
 =\|e_h'\|_2^2,
\end{equation}
because $|e_h'|^2$ has degree at most $2k-2$, within the
Gauss--Lobatto exactness range. Integrating the matrix derivatives
along the segment, using \eqref{un:eq:quadrature-energy} and
$\sum_i m_i=1$, gives, for arbitrary $f_h,\phi_h\in V_h^2$,
\begin{align*}
 |\delta b_h(f_h,\phi_h)|
 &\le C\|e_h\|_{H^1}\|f_h\|_\infty\|\phi_h\|_\infty,\\
 |\delta^2b_h(f_h,\phi_h)|
 &\le C\|e_h\|_{H^1}^2\|f_h\|_\infty\|\phi_h\|_\infty.
\end{align*}
Consequently \eqref{un:eq:reference-bounds} and
$H^1\hookrightarrow L^\infty$ bound the two mass terms in
\eqref{un:eq:remainder-identity} by
$C(\tau^{-1}+1)\|e_h\|_{H^1}^2\|\phi_h\|_{H^1}$.
Coercivity of $(a_{I_hx+e_h}+\tau^{-1}b_{I_hx+e_h,h})$ proves \eqref{un:eq:remainder}.
\end{proof}

\subsection{Proof of Theorem~\ref{un:thm:main}}

\begin{proof}
For $0\le m\le M$, let $e_h^m=x_h^m-I_hx^m$.
We estimate this position error in the adapted norm \eqref{un:eq:norm}.
The initial error is exactly zero.
Choose $c\le1$ and $\tau_0\le1$. As long as the input lies in the
fixed neighborhood, Lemmas~\ref{un:lem:defect} and
\ref{un:lem:remainder}, and \eqref{un:eq:discrete-stability}, give
\begin{equation}\label{un:eq:recurrence}
\begin{aligned}
 \|e_h^{m+1}\|_{x^{m+1}}
 &\le(1+C_0\tau)\|e_h^m\|_{x^m}+C_1h^{k+1}\\
 &\quad+C_2h^{-1/2}\|e_h^m\|_{x^m}^2,
 \qquad \|e_h^0\|_{x^0}=0.
\end{aligned}
\end{equation}
since $h/\tau\le c\tau$. The constants are fixed uniformly for
$0<c\le1$ and are independent of the induction bound chosen below.

Let $C_T$ be a Gronwall constant for the linear recurrence
with coefficient $1+(C_0+1)\tau$ and forcing $C_1h^{k+1}$,
and fix $K=2C_T+1$. We close the induction at the scale
\begin{equation}\label{un:eq:stopping}
 \|e_h^m\|_{x^m}\le K h^{k+1}/\tau.
\end{equation}
Decrease $\tau_0$ so that
\[
 C_2Kc^{3/2}\tau_0\le1.
\]
For every $k\ge1$ and $h\le1$, $h^{k+1/2}\le h^{3/2}$.
Thus, whenever \eqref{un:eq:stopping} holds, the nonlinear
coefficient satisfies
\[
 C_2h^{-1/2}\|e_h^m\|_{x^m}
 \le C_2K h^{k+1/2}/\tau
 \le C_2K h^{3/2}/\tau
 \le C_2Kc^{3/2}\tau^2\le\tau.
\]
Consequently,
\[
 C_2h^{-1/2}\|e_h^m\|_{x^m}^2
 \le\tau\|e_h^m\|_{x^m},
\]
so the quadratic term is included in the linear factor
$1+(C_0+1)\tau$ in \eqref{un:eq:recurrence}.
The inverse inequality and interpolation also give
\[
 \|x_h^m-x^m\|_{\W}
 \le C\{h^k+K h^{k+1/2}/\tau\}
 \le C(1+K\sqrt c)h^k.
\]
For sufficiently small $\tau_0$, this places the entire segment from
$I_hx^m$ to $x_h^m$ in the fixed neighborhood. Hence the next step
exists uniquely, and \eqref{un:eq:recurrence} implies
\begin{equation}\label{un:eq:gronwall}
 \|e_h^m\|_{x^m}\le C_T h^{k+1}/\tau
\end{equation}
through that next index. This strictly improves
\eqref{un:eq:stopping}, so the induction reaches $M$.
Uniform norm equivalence gives the supercloseness estimate
\begin{equation}\label{un:eq:superclose}
 \max_{m\le M}\|x_h^m-I_hx^{m}\|_{H^1}\le C h^{k+1}/\tau.
\end{equation}
Interpolation and one inverse estimate yield
\[
 \|x_h^m-x^m\|_{\W}
 \le C\{h^k+h^{-1/2}h^{k+1}/\tau\}
 =Ch^k(1+\sqrt h/\tau)\le Ch^k.
\]
Sobolev embedding and interpolation also give $\max_{m\le M}\|x_h^m-x^{m}\|_{L^\infty}
\le C h^{k+1}/\tau$.

The spatial $\W$ proximity proves regularity and embeddedness of
every finite element curve. On short parameter intervals its
elementwise tangents have a positive component along a fixed nearby
smooth tangent. On the complementary pairs of parameters, the
time-semidiscrete curves $\Gamma^m$ retain the uniform separation
of the exact curve family. Both properties
persist under the displayed small $\W$ perturbation. Positive nodal
mass recovers the scalar curvature uniquely from the eliminated
position equation.

Finally, Lemma~\ref{reg:lem:regularity} and
$\widehat x_h^m-\widehat x^m=(x_h^m-x^m)\circ(\chi^m)^{-1}$ give
\[
 \|\widehat x_h^m-\widehat x^m\|_{\W}
 \le C\|x_h^m-x^m\|_{\W}\le Ch^k.
\]
Combining this with \eqref{up:eq:first-order} gives
\eqref{un:eq:total}. Taking the images
of the matched maps proves the Hausdorff estimate. The matched
$H^1$, $L^2$ and $L^\infty$ bounds follow from the
$W^{1,\infty}$ bound on the fixed parameter domain.
\end{proof}
\section*{Statements and declarations}

{\small
\raggedright
\bibliographystyle{abbrv}
\bibliography{ref}
}
\end{document}